\numberwithin{equation}{section}
\definecolor{darkorange}{rgb}{1.0, 0.55, 0.0}
\definecolor{brinkpink}{rgb}{0.98, 0.38, 0.5}
\definecolor{verde}{RGB}{0,140,69}
\newcommand{\bbD}{{\ensuremath{\mathbbm D}} }
\newcommand{\bbE}{{\ensuremath{\mathbbm E}} }
\newcommand{\bbN}{{\ensuremath{\mathbbm N}} }
\newcommand{\bbP}{{\ensuremath{\mathbbm P}} }
\newcommand{\bbQ}{{\ensuremath{\mathbbm Q}} }
\newcommand{\bbR}{{\ensuremath{\mathbbm R}} }
\newcommand{\bbZ}{{\ensuremath{\mathbbm Z}} }
\newcommand{\cA}{{\ensuremath{\mathcal A}} }
\newcommand{\cB}{{\ensuremath{\mathcal B}} }
\newcommand{\cC}{{\ensuremath{\mathcal C}} }
\newcommand{\cI}{{\ensuremath{\mathcal I}} }
\newcommand{\cR}{{\ensuremath{\mathcal R}} }
\newcommand{\cU}{{\ensuremath{\mathcal U}} }
\newcommand{\cV}{{\ensuremath{\mathcal V}} }
\newcommand{\cW}{{\ensuremath{\mathcal W}} }
\newcommand{\bE}{{\ensuremath{\mathbf E}} }
\newcommand{\bP}{{\ensuremath{\mathbf P}} }
\newcommand{\bT}{{\ensuremath{\mathbf T}} }
\newcommand{\bV}{{\ensuremath{\mathbf V}} }
\newcommand{\T}{\mathsf{T}}
\newcommand{\ga}{\alpha}
\newcommand{\gb}{\beta}
\newcommand{\gd}{\delta}
\newcommand{\gep}{\varepsilon}       
\newcommand{\gz}{\zeta}
\newcommand{\go}{\omega}
\newcommand{\suptwo}[2]{\sup\limits_{\substack{#1 \\ #2}}}
\newcommand{\inftwo}[2]{\sup\limits_{\substack{#1 \\ #2}}}
\renewcommand{\tilde}{\widetilde}
\newcommand{\what}{\widehat}
\newcommand{\wcheck}{\widecheck}
\newcommand{\ind}{\mathbbm{1}}
\newcommand{\dd}{{\ensuremath{\mathrm d}} }
\DeclareMathOperator*{\argmax}{arg\,max}
\newtheorem{assump}{Assumption}
\newtheorem{notation}{Notation}
\newtheorem{theorem}{Theorem}[section]
\newtheorem{definition}{Definition}[section]
\newtheorem{lemma}[theorem]{Lemma}
\newtheorem{proposition}[theorem]{Proposition}
\newtheorem{remark}{Remark}[section]
\renewcommand{\epsilon}{\varepsilon}
\title{One-dimensional polymers in random environments:\\
stretching vs. folding}
\author{Quentin~Berger
\footnote{Sorbonne Universit\'e, LPSM,
Campus Pierre et Marie Curie, case 158,
4 place Jussieu, 75252 Paris Cedex~5, France,
\emph{quentin.berger@sorbonne-universite.fr}}
\and
Chien-Hao~Huang
\footnote{Department of Mathematical Sciences, National Chengchi University, Taipei 16302, Taiwan,
\emph{haohuang@nccu.edu.tw}}
\and
Niccol\`o~Torri
\footnote{Universit\'e Paris-Nanterre, Laboratoire MODAL'X, UMR CNRS 9023 and FP2M, CNRS FR 2036, France,
\emph{niccolo.torri@parisnanterre.fr}}
\and
Ran~Wei
\footnote{Department of Mathematics, Nanjing University, 22 Hankou Road, Nanjing 210093, China,
\emph{weiran@nju.edu.cn}}
}
\date{}
\begin{document}

\maketitle

\begin{abstract}
In this article we study a \emph{non-directed polymer model} on $\mathbb Z$, that is a one-dimensional simple random walk placed in a random environment.
More precisely, 
the law of the random walk is
modified by the exponential of the sum of
potentials $\beta \omega_x -h$ sitting on the range of the random walk, 
where $(\omega_x)_{x\in \mathbb Z}$
are i.i.d.\ random variables (the disorder) and  $\beta\geq 0$ (disorder strength) and $h\in \mathbb{R}$ (external field) are two parameters.
When $\beta=0,h>0$, this corresponds to 
a random walk penalized by its range;
when $\beta>0, h=0$, this corresponds to the  ``standard'' polymer model in random environment, except that it is non-directed. 
In this work, we allow the parameters 
$\beta,h$ to vary according to the length of the random walk and we study in detail
the competition between the \emph{stretching effect} of the disorder, the \emph{folding effect} of the external field (if $h\ge 0$) and the \emph{entropy cost} of atypical trajectories.
We prove a complete description of the (rich) phase diagram
and we identify scaling limits of the model in the different phases.
In particular, in the case $\beta>0, h=0$
of the non-directed polymer, if $\go_x$ has a finite second moment we find a range size fluctuation exponent $\xi=2/3$.

\medskip

\emph{Keyword}: Random Polymer, Random walk, Range, Heavy-tail distributions, Weak-coupling limit, Super-diffusivity, Sub-diffusivity 

\emph{Mathematics Subject Classification}: 82D60, 60K37, 60G70

\end{abstract}

\section{Introduction}
 We consider here a simple symmetric random walk on
 $\mathbb{Z}^d$, $d\geq 1$,
placed in a time-independent random environment, see \cite{H19}. The interaction with the environment 
occurs on the range of the random walk,
\textit{i.e.}\ on the sites visited by the walk.
This model may be seen as a disordered version of random walks penalized by their range (in the spirit of \cite{Bolt94,DV79}). One closely related model is the celebrated directed polymer in random environment model (see~\cite{C17} for a review),
which has attracted interest from both the mathematical and physics communities over the last forty years,
and can be used to describe a polymer chain placed in a solvent with impurities.

\subsection{The model}  
Let $S:=(S_{n})_{n\geq0}$ be a simple symmetric random walk on $\mathbbm{Z}^{d}$, $d\geq 1$, starting from $0$, whose trajectory represents a (non-directed) polymer.
Let $\bP$ denote its law.
The \textit{random environment}, or \emph{disorder}, is modeled by a field $\omega:=(\omega_{x})_{x\in\mathbbm{Z}^{d}}$ 
of i.i.d.\ random variables. 
We let 
$\bbP$ denote the law of $\omega$, and
$\mathbbm{E}$ the expectation
with respect to $\bbP$ (assumptions on the law of $\go$ are detailed in Section~\ref{sec:setting} below). 

For $\beta\geq 0$ (the disorder strength, or  inverse temperature) and $h \in \bbR$ (an external field),
we define for all $N\in \bbN$ the following Gibbs transformation of the law $\bP$, called the \emph{polymer measure}:
\begin{equation}\label{E101}
\frac{\dd\mathbf{P}_{N,\beta,h}^{\omega}}{\dd\mathbf{P}}(S):=\frac{1}{Z_{N,\beta,h}^{\omega}}\exp\bigg(\sum\limits_{x\in\mathbbm{Z}^{d}}(\beta\omega_{x}-h)\mathbbm{1}_{\{x\in\mathcal{R}_{N}\}}\bigg),
\end{equation}
where  $\mathcal{R}_{N}=\{S_{0}, S_1, \ldots, S_N\}$
is the range of the random walk up to time $N$, and
\begin{equation*}\label{E103}
Z_{N,\beta,h}^{\omega}:=\mathbf{E}\bigg[\exp\bigg(\sum\limits_{x\in\mathbbm{Z}^{d}}(\beta\omega_{x}-h)\mathbbm{1}_{\{x\in\mathcal{R}_{N}\}}\bigg)\bigg] = \mathbf{E}\bigg[\exp\bigg(\beta\sum\limits_{x\in\cR_N}\omega_{x}  -  h |\cR_N|\bigg)\bigg]
\end{equation*}
is the partition function of the model, defined so that $\mathbf{P}_{N,\beta,h}^{\omega}$ is a probability measure.

Let us stress the main differences with the standard directed polymer model: 
(i)~here, the random walk does not have a preferred direction;
(ii)~there is an additional external field $h\in \bbR$;
(iii)~the random walk can only pick up one weight $\beta\omega_{x}-h$ at a site $x\in \bbZ^d$, so returning to an already visited site
does not bring any reward or penalty
(in the directed polymer model, the environment is renewed each time).

\smallskip

We now wish to understand the typical behavior of polymer trajectories
$(S_0, \ldots, S_N)$ under the polymer measure $\bP_{N,\beta,h}^{\omega}$.
Two important quantities we are interested in
are\footnote{We use the standard notation $a_n\asymp b_n$ if $\limsup_{n\to+\infty} \frac{a_n}{b_n} <+\infty$ and $\limsup_{n\to+\infty} \frac{b_n}{a_n} <+\infty$.}
\begin{itemize}
\item  the \emph{range size} exponent $\xi$, loosely defined as $\bbE \bE_{N,\gb,h}^{\omega} |\cR_N|  \asymp N^{\xi}$;
\item the  \emph{fluctuation} exponent $\chi$, loosely
defined as $\mathbb{V}\mathrm{ar}(\log Z_{N,\gb,h}^{\omega} )  \asymp N^{2 \chi}$.
\end{itemize}

In view of \eqref{E101}, there are three quantities that may influence the behavior of the polymer:
the energy collected from the \emph{random environment} $\omega$;
the penalty $h$ (or reward depending on its sign) for having a large range;
the entropy cost of the exploration of the random walk $S$. 
If $\beta=0$ and $h>0$, then we recover a random walk penalized by its
range. This model is by now quite well understood: the random walk
folds itself in a ball of radius $\rho_d N^{1/(d+2)}$, for some specific constant $\rho_d>0$, see \cite{DV79,Szni91,Bolt94,BC18,DFSX18} (these works mostly focus on the case of dimension $d\geq 2$).
If $\beta=0$ and $h<0$, then 
we get a random walk rewarded by its range: the 
random walk ``stretches'' to obtain a range of order $N$.
If $\gb>0$ and $h=0$, then we obtain a model for a non-directed polymer
in random environment, the environment being seen only once by the random walk (in the same spirit as the excited random walk \cite{BW03} or more generally the cookie random walk~\cite{Zer05}).
In general, disorder should have a ''stretching'' effect because
the random walk is trying to reach more favorable regions in the environment.
We will see that it is indeed the case in dimension $d=1$, where we find 
that the random walk stretches up to a distance $N^{2/3}$ ($\xi=\frac23$).

\subsection{Setting of the paper}
\label{sec:setting}

In this article, we focus on the case of the dimension $d=1$:
the behavior of the model is already very rich 
and we are able to obtain sharp results.
Let us mention that in dimension $d\geq 2$ some aspects of the model are considered in~\cite{BTW21}, but many questions remain open.

Our main  assumption on the environment
is that  $\omega_x$  is in the domain
of attraction of some $\alpha$-stable law, with $\ga \in (0,2]$,
$\ga\neq 1$. More precisely, we assume the following\footnote{We use the standard notation $a(t)\sim b(t)$ if $\lim_{t\to+\infty} \frac{a(t)}{b(t)} =1$ and $a(t) =o(b(t))$
if $\lim_{t\to+\infty} \frac{a(t)}{b(t)} =0$.}.

\begin{assump}
\label{assump1}
 If $\ga = 2$ we assume that $\bbE[\go_0]=0$ and $\bbE[\go_0^2]=1$. 
  If $\ga \in (0,1) \cup (1,2)$ we assume that 
$\bbP(\go_0>t) \sim p\, t^{-\ga}$ and $\bbP(\go_0<-t) \sim q\, t^{-\ga}$  as $t\to\infty$,  
with  $p+q=1$ (and $p>0$);  if $q=0$, we interpret it as  $\bbP(\go_0<-t) =o (t^{-\ga})$.
 Moreover, if $\ga \in (1,2)$, we also assume that $\bbE[\go_0]=0$.
\end{assump}

Let us stress that Assumption~\ref{assump1} ensures that:
\begin{itemize}
\item if $\ga =2$, then $\go_i$ is in the normal
domain of attraction, so that $(\frac{1}{\sqrt{n}} \sum_{i = un}^{vn} \go_i )_{u\leq 0 \leq v}$ converges to a two-sided (standard) Brownian Motion.
\item if $\ga \in (0,1)\cup (1,2)$, then $\go_i$ is in the domain
of attraction of some non-Gaussian stable law and $(\frac{1}{n^{1/\alpha}} \sum_{i = un}^{vn} \go_i )_{u\leq 0 \leq v}$ converges to a two-sided $\alpha$-stable  L\'evy process.
\end{itemize}
We leave the case $\ga=1$ aside mostly for simplicity: indeed, to obtain a process convergence as above, a non-zero centering term is in general needed (even in the symmetric case $p=q$, see~\cite[IX.8]{Feller2}, or~\cite{Ber19}); however most of our analysis applies in that case.
We also focus on pure power tails when $\alpha \in (0,2)$, simply to lighten notation and simplify the statements: our results could easily be adapted to the case of regularly varying tails.

Henceforth we refer to $(X_t)_{t\in \bbR}$ as the two-sided Brownian motion if $\alpha =2$ and as the two-sided L\'evy process defined below if $\ga \in (0,1) \cup (1,2)$.
 We refer to Chapter 1 of \cite{Levyproc}  for an overview on L\'evy processes.

\begin{notation}\label{defL}
Let $\ga\in (0,2]$. 
We let $X^{(1)}=(X_v^{(1)})_{v\ge 0}$ and $X^{(2)}=(X_u^{(2)})_{u\ge 0}$
 be two i.i.d.\ standard Brownian motions if $\alpha=2$ and  two i.i.d.\ ($\alpha$-stable) L\'evy processes with no Brownian component, no drift, and L\'evy measure $\nu(\dd x) = \ga(p\ind_{\{x>0\}}~+~q\ind_{\{x<0\}}) |x|^{-1-\alpha}~\dd x$
 if  $\ga \in (0,2)$.
\end{notation}

We now define a coupling between the discrete environment $(\omega_x)_{x\in \mathbb Z}$ and the processes $X^{(1)}$ and $X^{(2)}$,
using the construction proposed in \cite{MR775792}. Let us consider the space $\bbD=\bbD(\bbR_+,\bbR)$ of all c\`adl\`ag real functions equipped with the Skorokhod metric $d$. 
We let 
\begin{equation}\label{eq:DefSigma}
\Sigma_j^{+} = \Sigma_j^{+}(\go) := \sum_{x=0}^{j} \go_x \, , \qquad \Sigma_{j}^{-}  = \Sigma_j^{-} (\go):= \sum_{x=-j}^{-1} \go_x  \quad \text{ for } j \geq 0\, ,
\end{equation}
and for $u,v \ge 0$
\begin{equation}
X_{N,v}^{(1)} = X_{N,v}^{(1)}(\go) :=N^{-\frac{1}{\alpha}}\Sigma_{\lfloor Nv \rfloor}^+, \qquad 
X_{N,u}^{(2)} = X_{N,u}^{(2)}(\go)=N^{-\frac{1}{\alpha}}\Sigma_{\lfloor Nu \rfloor}^- \,.
\end{equation}
For $0<\alpha<2$ the two (independent) processes $X_{N}^{(1)}=(X_{N,v}^{(1)})_{v\ge 0}$ and $X_N^{(2)}=(X_{N,u}^{(2)})_{u\ge 0}$ are c\`adl\`ag (they are $\bbD$-valued random variables) and they converge in distribution to $X^{(1)}=(X_v^{(1)})_{v\ge 0}$ and $X^{(2)}=(X_u^{(2)})_{u\ge 0}$ as in Notation~\ref{defL}, which are $\bbD$-valued random variables.
Then, as done in~\cite[Section 3]{MR775792}, we can build on the same probability space a sequence of random fields $\go^{(N)} = (\go_x^{(N)})_{x\in \bbZ}$ parametrized by $N$,
such that $\go^{(N)}$ has the same law as the original environment $\go$ for every $N$
and for which the processes  
$X_N^{(1)} (\go^{(N)})$ and $X_N^{(2)} (\go^{(N)})$ converge a.s.\ in the Skorokhod metric on $\bbD$ to $X^{(1)}$ and $X^{(2)}$ respectively---we refer to chapter VI in \cite{JS03} for a characterisation of the convergence of sequences in $\bbD$.
A coupling can also be realized in the case $\alpha=2$, see e.g.~\cite[Ch.~2]{CR14}.
We denote by $\hat\bbP$ the law of the coupling
and in order to lighten notation we will denote~$\hat\go$ instead of~$\go^{(N)}$ in such a coupling,
letting the dependence on $N$ be implicit.

\subsection{Presentation of a  first result}

In the present paper,
we allow $\gb$ and $h$ to vary with the size of the system,
giving rise to a large diversity of possible behaviors.
Before we go into these details, let us 
already state how our results translate in the case of fixed parameters $\gb,h$.

We  define 
\begin{equation*}
M_N^+ :=\max_{0 \leq n\leq N} S_n \geq 0 \qquad \text{and} \qquad  M_N^{-} :=  \min_{0\leq n\leq N} S_n \leq 0
\end{equation*}
the right-most and left-most points of the random walk after $N$ steps. 
In particular, the size of the range is $M_N^+-M_N^-$.

\begin{theorem}
\label{thm:fixed}
Consider the coupling $\hat\bbP$ defined above.

\begin{enumerate}
\item Case $\ga\in (1,2]$.

\begin{enumerate}
\item If $\gb\geq 0$ and $h>0$.
Then, for any $\gep>0$, we have that 
\[
\lim_{N\to\infty} \frac{1}{N^{\frac{1}{3}} } \log Z_{N,\gb,h}^{\go }=-\frac{3}{2}(h \pi)^{2/3} \qquad \text{$\bbP$-a.s.}
\]
 and
\[
\lim_{N\to\infty} \bP_{N,\gb,h}^{\go } \Big( \Big| \frac{1}{N^{1/3}} (M_N^+-M_N^-) - \pi^{\frac23} h^{-\frac13} \Big| >\gep \Big)   = 0 \qquad \text{$\bbP$-a.s.}
\]
(the coupling is not needed here).
\vspace{0.2cm}

\item \label{thm:fixedb}If $\gb >0$ and $h=0$.
Then, letting 
\[
Y_{u,v}^{ (2)} = Y_{u,v}^{\beta,  (2)} :=  \gb (X_{v}^{(1)}  + X_{u}^{(2)}) - \frac{1}{2} ( u\wedge v +u+v)^2
\] 
for $u,v\geq 0$,
we have 
\[
\lim_{N\to\infty} \frac{1}{N^{\frac{1}{2\ga-1}} } \log Z_{N,\gb,h}^{\hat\go } =  \sup\limits_{u,v\geq 0 }  \Big\{ Y_{u,v}^{(2)} \Big\} \in (0,+\infty) \,, \qquad  \text{ in $\hat\bbP$-probability.} 
\]
Additionally, the location of the maximizer of $\sup_{u,v\geq 0 }  \big\{ Y_{u,v}^{ (2)} \big\}$ is unique, that is $(\cU^{(2)},\cV^{(2)}):=$ $ \argmax_{u,v\geq 0}  \big\{ Y_{u,v}^{(2)} \big\}$ is well-defined, and for any $\gep>0$, 
\[
\lim_{N\to\infty} \bP_{N,\gb,h}^{\hat\go }  \Big( \Big|  \frac{1}{N^{\frac{\ga}{2\ga-1}}} (M_N^- , M_N^+ ) - (-\cU^{(2)},\cV^{(2)}) \Big|>\epsilon \Big) = 0 \,, \qquad  \text{ in $\hat\bbP$-probability.} 
\]

\item If $\gb\geq 0$ and $h<0$.
Then for any $\gep>0$, we have that 
\[
\lim_{N\to\infty} \bP_{N,\gb,h}^{\go } \Big( \Big| \frac{1}{N} |S_N| -   \tanh |h| \Big| >\gep \Big)    = 0 \,, \qquad  \text{$\bbP$-a.s.} 
\]
\end{enumerate}

\item \label{thm:fixedc} Case $\ga\in (0,1)$. Let $\gb>0$ and $h\in \bbR$.
Then letting
\[
Y_{u,v}^{(3)}= Y_{u,v}^{\gb, (3)} := \gb (X_{v}^{(1)}  + X_{u}^{(2)})  - \infty \ind_{\{u\wedge v +u+v > 1\} }
\]
for $u,v\geq 0$,
we have
\[
\lim_{N\to\infty} \frac{1}{N^{\frac{1}{\ga}}} \log Z_{N,\gb,h}^{\hat\go } =  \sup_{u,v\geq  0 } \Big\{Y_{u,v}^{(3)} \Big\} \in (0,+\infty)\,, \qquad  \text{ $\hat\bbP$-a.s.} 
\]
Additionally,  $(\cU^{(3)},\cV^{(3)}):=\argmax_{u,v\geq 0}  \big\{ Y_{u,v}^{ (3)} \big\}$ is well-defined and for any $\gep>0$, 
\[
\lim_{N\to\infty} \bP_{N,\gb,h}^{\hat\go }  \Big( \Big|  \frac{1}{N} (M_N^- , M_N^+ ) - (-\cU^{(3)},\cV^{(3)}) \Big|>\epsilon \Big) = 0 \,, \qquad  \text{ $\hat\bbP$-a.s.} 
\]
\end{enumerate}
\end{theorem}

Let us stress that, when $\ga \in (1,2]$,
the range size (see Definition~\ref{def:endtoend} below for a proper definition) of the polymer is:
\begin{itemize}
\item[(a)] of order $N^{1/3}$ if $h>0$ --- folded phase, this is included in Theorem~\ref{C104};
\item[(b)] of order $N^{\ga/(2\ga-1)}$ if $h=0$, $\gb>0$ --- extended phase, this is included in Theorem~\ref{C102};
\item[(c)] of order $N$ if $h<0$ --- extended phase, this is included in Theorem~\ref{R4tilde}.
\end{itemize}
On the other hand, in the case $\ga\in (0,1)$,
the range size is always of order $N$, for whatever value of $h\in \bbR$ --- extended phase, this is included in Theorem~\ref{C103} below.

\begin{remark}\label{rem:thmfixed}
The main advantage of using the coupling $\hat\bbP$ defined above is that we are able to discuss the size of the range of the random walk in relation to the environment. More precisely, with high $\hat \bbP$-probability (or $\hat \bbP$-a.s., depending on the case), the end-points of the range under the polymer measure converge to the maximizer of the variational problem. This is what we present in Theorem \ref{thm:fixed} and later in Section~\ref{sec:MainResults} (Regions~2, 3 and 4, see Figure~\ref{F1}). 
 Let us mention that there are some convergences in $\hat \bbP$-probability that we are not able to upgrade to $\hat \bbP$-a.s.\ convergences: this is due to a lack of control of the convergence of $(\Sigma_j^{\pm})_{j\geq 1}$ on different scales in Lemmas~\ref{lem:R2step2} and~\ref{R4LM2}, see Remark~\ref{rem:almostsure}. Note that this problem disappears we have a good control on the coupling (see again Remark~\ref{rem:almostsure}) or when $\xi=1$.
On the other hand, in the case when the limiting behavior is not affected by the environment (for example in Regions~1, 5, 6 discussed in Section~\ref{sec:MainResults}), the coupling is not needed.
\end{remark}

\subsection{Varying the parameters $\gb$ and $h$}

In order to observe a transition between a folded phase ($h>0,\gb=0$)
and an unfolded phase ($h=0,\gb>0$,  or $h< 0$),
a natural idea is to consider parameters $\beta$ and $h$ that depend on the size of the system, \textit{i.e.}\ $\beta:=\beta_{N}$ and $h:=h_{N}$. 
There are then some sophisticated balances between the energy gain, the range penalty and the entropy cost as we tune $\beta_{N}$ and $h_{N}$. 
Our main results identify the different regimes 
for the behavior of  the random walk:
we provide a complete (and rich) phase diagram (see Figures~\ref{F1}-\ref{F2}-\ref{F3} below),
and describe each phase precisely (range size and fluctuation exponents, limit of the rescaled log-partition function).

In the rest of the paper, we therefore consider the following setting:
\begin{equation}
\label{def:betah}
\beta_{N}:=\hat{\beta}\, N^{-\gamma} \quad
\text{ and } \quad 
h_{N}:= \hat{h}\, N^{-\zeta} \, ,
\end{equation}
where $\gamma, \zeta\in \bbR$ describe the asymptotic behavior of $\gb_N, h_N$, and $\hat{\beta}>0,\hat{h} \in \bbR$ are two fixed parameters.
We could consider a slightly more general setting,
adding some slowly varying function in the asymptotic behavior
of $\beta_N$ or $h_N$:
we chose to stick to the simpler strictly power-law case,
to avoid lengthy notation and more technical calculations.


\section{Some heuristics: presentation of the phase diagrams} 
\label{sec:heuristics}

In this section, we focus on the case $h\geq 0$; the case $h<0$ is considered in Section~\ref{rem:h<0} (it has a less rich behavior and is somehow simpler, 
see Remark~\ref{rem:h} below).
In analogy with the directed polymer model in a heavy-tailed random
environment \cite{BT19a,BT19b}, the presence of heavy-tails
(Assumption~\ref{assump1}) strongly impacts the behavior of the model:
the phase diagrams are different according to whether
$\ga\in (1,2]$, $\ga\in (\frac12,1)$ or $\ga \in (0,\frac12)$.

\smallskip
Let us denote $\xi$ the typical range size exponent of the random walk under the polymer measure $\mathbf{P}_{N,\beta_{N},h_{N}}^{\omega}$
(see Definition~\ref{def:endtoend} below for a proper definition),
and let us develop some heuristics to determine $\xi\in [0,1]$.

First of all, thanks to Lemmas~\ref{lem:superdiffusive}-\ref{lem:subdiffusive} in Appendix, we have for $0<a<b$,
\begin{equation}\label{E106} 
\log\mathbf{P}\big( |\mathcal{R}_{N}|  \in (a N^{\xi} , bN^{\xi}) \big) \asymp \log\mathbf{P}\Big(\max_{0\leq n\leq N}|S_{n}|  \in (a N^{\xi} , bN^{\xi}) \Big)\asymp\begin{cases}
-N^{2\xi-1},& \text{if}~\xi\geq\frac{1}{2},\\
-N^{1-2\xi},& \text{if}~\xi\leq\frac{1}{2}.
\end{cases}
\end{equation}
If $\xi >1/2$, this corresponds to a ``stretching'' of the random walk, whereas when $\xi<1/2$, this corresponds to a ``folding'' of the random walk. We  refer to~\eqref{E106}
as the \emph{entropic cost} of having range size $N^{\xi}$.

Then, if the range size is of order $N^{\xi}$,
then under Assumption~\ref{assump1} and in view of \eqref{def:betah}, we get that
\begin{equation}\label{E107-1}
\gb_N \sum_{x\in \cR_N} \go_x \asymp  \hat \gb N^{\frac{\xi}{\ga} -\gamma} \, , \qquad h_N |\cR_N| \asymp \hat h  N^{\xi-\zeta} \, .
\end{equation}
We refer to the first term as the ``disorder'' term, and to the second one as the ``range'' term (recall we focus for now on the case $\hat h> 0$ so the ``range'' term is always with a minus sign).
All together, if the range size is of order $N^{\xi}$, then  $\log Z_{N,\beta_{N},h_{N}}^{\omega}$ should get a contribution from three terms:
\begin{equation}\label{E107}
\text{disorder} \asymp \hat \gb N^{\frac{\xi}{\alpha}-\gamma} \,,
\quad 
\text{range} \asymp  - \hat h N^{\xi-\zeta}  \,,
\quad
\text{entropy} \asymp -
\begin{cases}
N^{1- 2\xi} \quad \text{if } \xi\leq 1/2 \, , \\
N^{2\xi-1} \quad \text{if } \xi \geq 1/2\, .
\end{cases}
\end{equation}

In \eqref{E107}, there is therefore a competition
between the ``disorder'' (first term), the ``range'' (second term), and the ``entropy'' (last term). 
We now discuss how a balance can be achieved between these terms depending on $\gamma$ and $\zeta$ (and how they determine $\xi$).
There are three main possibilities:
\begin{enumerate}
\item[(i)] there is a ``disorder''-``entropy'' balance (and the ``range'' term is negligible);
\item[(ii)] there is a ``range''-``entropy'' balance (and the ``disorder'' term is negligible);
\item[(iii)] there is a ``range''-``disorder'' balance (and the ``entropy'' term is negligible).
\end{enumerate}
To summarize, all three regimes can occur (depending on $\gamma,\zeta$) if $\alpha\in (1,2]$;
on the other hand, regime (iii) disappears if $\alpha\in (0,1)$,
and regime (i) disappears if $\alpha\in (0,\frac12)$.
We now determine for which values of $\gamma,\zeta$ one can observe the different regimes above:
we  consider the three subcases $\ga\in (1,2]$, $\ga\in (\frac12,1)$ and $\ga\in (0,\frac12)$ separately.

\subsection{Phase diagram for $\ga\in (1,2]$}
Instead of looking for ``disorder''-``entropy'', ``range''-``entropy'' or ``range''-``disorder'' balance, we will
find conditions to have the  ``disorder" term  much larger, much smaller, or of the order of the ``range" term.

\smallskip
\noindent
\textit{Case I (``disorder"$\,\gg\,$``range").} This corresponds to having $\xi/\alpha-\gamma>\xi-\zeta$.
In that case, the random walk should not feel the penalty for having a large range, so we should have $\xi\geq 1/2$.
The competition occurs only between energy and entropy: one could achieve a balance if  $\xi/\alpha-\gamma=2\xi-1$, that is if
\begin{equation}\label{E108}
\xi=\frac{\ga}{2\ga-1} (1-\gamma)  \quad\mbox{when}\quad\gamma<\frac{(2\ga-1)\zeta-(\ga-1)}{\ga} \,,
\end{equation}
where the condition on $\gamma$ derives from the fact that  $\xi/\alpha-\gamma>\xi-\zeta$, \textit{i.e.}\ $\gamma<\zeta -\xi\,\frac{\alpha-1}{\alpha} $, in the regime considered here.
However, since $\xi\leq1$, we should have $\xi=1$ when $\gamma$ is too small, more precisely when $\gamma\leq- \frac{\alpha-1}{\alpha}$. Thus, we should have have
\begin{equation}\label{E109}
\xi=1\quad\mbox{when}\quad\gamma\leq-\frac{\alpha-1}{\alpha}~\mbox{and}~\gamma<\zeta-\frac{\ga-1}{\ga}.
\end{equation}
Also, since $\xi\geq1/2$, we should have $\xi=1/2$ if $\gamma$ is too large, more precisely when
 $\gamma\geq \frac{1}{2\ga}$. Thus, we should have
\begin{equation}\label{E110}
\xi=\frac{1}{2}\quad\mbox{when}\quad\gamma\geq\frac{1}{2\ga}~\mbox{and}~\gamma<\zeta-\frac{\ga-1}{2\ga}.
\end{equation}

\smallskip
\noindent
\textit{Case II (``disorder"$\,\ll\,$``range").} This corresponds to having $\xi/\alpha-\gamma<\xi-\zeta$. In that case, the random walk 
feels the penalty for having a large range, so we should have $\xi\le 1/2$. 
The competition occurs only between range and entropy: one could achieve a balance if  $\xi-\zeta=1-2\xi$, that is if
\begin{equation}\label{E111}
\xi=\frac{1+\zeta}{3}\quad\mbox{when}\quad\gamma>\frac{(2\ga+1)\zeta-(\ga-1)}{3\ga}\, , 
\end{equation}
where the condition on $\gamma$ derives from the fact that  $\xi/\alpha-\gamma>\xi-\zeta$, \textit{i.e.}\ $\gamma>\zeta -\xi\,\frac{\alpha-1}{\alpha} $, in the regime considered here.
Since $\xi\in[0,1/2]$, similarly to \eqref{E109}-\eqref{E110} we should have that 
\begin{equation}\label{E112}
\xi=0\quad\mbox{when}\quad\zeta\leq-1~\mbox{and}~\gamma>\zeta,
\end{equation}
and
\begin{equation}\label{E113}
\xi=\frac{1}{2}\quad\mbox{when}\quad\zeta\geq\frac{1}{2}~\mbox{and}~\gamma>\zeta-\frac{\ga-1}{2\ga}.
\end{equation}

\smallskip
\noindent
\textit{Case III (``disorder"$\,\asymp\,$``range"$\,\gg\,$``entropy'').} This corresponds to having $\xi/\ga-\gamma=\xi-\zeta$, that is
\begin{equation}\label{E114a}
\xi=\frac{\ga}{\ga-1} (\zeta-\gamma)\, .
\end{equation}
In this regime, the entropy cost should be negligible compared to the  disorder gain, and we should therefore have that 
$\xi/\alpha -\gamma >1-2\xi$ if 
$\xi\leq1/2$ and $\xi/\alpha -\gamma >2\xi-1$ for $\xi\geq1/2$: after some calculation
(and using \eqref{E114a}), we find the following condition on $\gamma$
\begin{equation}\label{E114}
\frac{(2\ga-1)\zeta-(\ga-1)}{\ga} < \gamma <\frac{(2\ga+1)\zeta-(\ga-1)}{3\ga}.
\end{equation}
Moreover, since $\xi\in[0,1]$, we must have
\begin{equation}\label{E115}
\zeta-\frac{\ga-1}{\ga}\leq\gamma\leq\zeta.
\end{equation}

\smallskip
To summarize, for $\ga \in (1,2]$, we have identified six different regimes according to the value of $\gamma,\zeta$:
they are described as follows.
A representation of these regions in the $(\zeta,\gamma)$-diagram is given in Figure~\ref{F1} below.
\begin{description}
\item[Region 1] ``disorder'', ``range'' $\ll$ ``entropy'' 
(Case I-II degenerate)
\[
R_{1}=\left\{\xi=\tfrac{1}{2},~\gamma>\tfrac{1}{2\ga},~\zeta>\tfrac{1}{2}\right\} \,;
\]
\item[Region 2] ``range'' $\ll$ ``disorder'' $\asymp$ ``entropy'' (Case I)
\[
R_{2}=\left\{\xi=\tfrac{\ga}{2\ga-1}(1-\gamma),~\tfrac{1-\ga}{\ga}<\gamma<\tfrac{(2\ga-1)\zeta-(\ga - 1)}{\ga} \wedge \tfrac{1}{2\ga}\right\} \, ;
\]
\item[Region 3] ``range'', ``entropy'' $\ll$ ``disorder'' (Case I degenerate)
\[
R_{3}=\left\{\xi=1,~\gamma<-\tfrac{\ga-1}{\ga},~\gamma<\zeta-\tfrac{\ga-1}{\ga}\right\} \,;
\] 
\item[Region 4] ``entropy'' $\ll$ ``range'' $\asymp$ ``disorder'' (Case III)
\[
R_{4}=\left\{\xi=\frac{\alpha}{\alpha-1}(\zeta-\gamma),~\tfrac{(2\ga-1)\zeta-(\ga - 1)}{\ga}\vee(\zeta-\tfrac{\ga-1}{\ga})<\gamma<\tfrac{(2\ga+1)\zeta-(\ga - 1)}{3\ga}\wedge\zeta\right\}\,;
\]
\item[Region 5]  ``disorder'' $\ll$ ``range'' $\asymp$ ``entropy'' (Case II)
\[
R_{5}=\left\{\xi=\tfrac{1+\zeta}{3},~\gamma>\tfrac{(2\ga+1)\zeta-(\ga - 1)}{3\ga},~-1<\zeta<\tfrac{1}{2}\right\}\,;
\]
\item[Region 6]  ``disorder'', ``entropy'' $\ll$ ``range''  (Case II degenerate)
\[
R_{6}=\left\{\xi=0,~\gamma>\zeta,~\zeta<-1\right\} \, .
\]
\end{description}

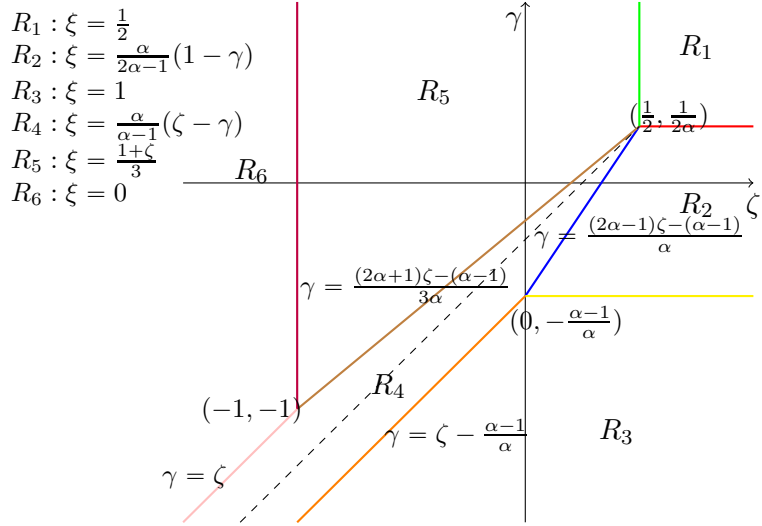
\begin{figure}[ht]
\centering
\begin{tikzpicture}[scale=3]
\draw[->](-1.5,0)--(1,0) node[anchor=north] {$\zeta$};
\draw[->](0,-1.5)--(0,0.8);
\draw (-0.05,0.8) node[anchor=north] {$\gamma$};
\draw[red,thick](0.5,0.25)--(1,0.25);
\draw[green,thick](0.5,0.25)--(0.5,0.8);
\draw (0.63,0.41) node[anchor=north] {\small $(\frac{1}{2},\frac{1}{2\ga})$};
\draw (0.75,0.7) node[anchor=north] {$R_{1}$};
\draw[blue,thick](0,-0.5)--(0.5,0.25);
\draw (0.5,-0.1) node[anchor=north] {\small $\gamma=\frac{(2\ga-1)\zeta-(\ga-1)}{\ga}$};
\draw (0.75,0) node[anchor=north] {$R_{2}$};
\draw[yellow,thick](0,-0.5)--(1,-0.5);
\draw (0.18,-0.5) node[anchor=north] {\small $(0,-\frac{\ga-1}{\ga})$};
\draw (0.4,-1) node[anchor=north] {$R_{3}$};
\draw[brown,thick](-1,-1)--(0.5,0.25);
\draw[purple,thick](-1,-1)--(-1,0.8);
\draw (-1.2,-0.9) node[anchor=north] {\small $(-1,-1)$};
\draw (-0.525,-0.325) node[anchor=north] {\small $\gamma=\frac{(2\ga+1)\zeta-(\ga-1)}{3\ga}$};
\draw (-0.4,0.5) node[anchor=north] {$R_{5}$};
\draw[pink,thick](-1,-1)--(-1.5,-1.5);
\draw (-1.2,0.15) node[anchor=north] {$R_{6}$};
\draw[orange,thick](-1,-1.5)--(0,-0.5);
\draw (-0.6,-0.8) node[anchor=north] {$R_{4}$};
\draw (-1.45,-1.2) node[anchor=north] {\small $\gamma=\zeta$};
\draw (-0.3,-1) node[anchor=north] {\small $\gamma=\zeta-\frac{\alpha-1}{\alpha}$};
\draw (-2.3,0.7) node[anchor=west] {\small $R_{1}:\xi=\frac{1}{2}$};
\draw (-2.3,0.55) node[anchor=west] {\small $R_{2}:\xi=\frac{\ga}{2\ga-1}(1-\gamma)$};
\draw (-2.3,0.4) node[anchor=west] {\small $R_{3}:\xi=1$};
\draw (-2.3,0.25) node[anchor=west] {\small $R_{4}:\xi =\frac{\alpha}{\alpha-1}(\zeta-\gamma)$};
\draw (-2.3,0.1) node[anchor=west] {\small $R_{5}:\xi=\frac{1+\zeta}{3}$};
\draw (-2.3,-.05) node[anchor=west] {\small $R_{6}:\xi=0$};
\draw[dashed] (-1.25,-1.5)--(0.5,0.25);
\end{tikzpicture}
\caption{Phase diagram in the case $\ga\in(1,2]$. The region $R_{1}$ and the dashed line $\gamma=\zeta-\frac{\alpha-1}{2\alpha}$ are the thresholds that split the regions of super-diffusivity and sub-diffusivity. 
Note that when $\ga=1$, the four lines $\gamma= \frac{(2\ga-1)\zeta-(\ga-1)}{\ga}$, $\gamma= \frac{(2\ga+1)\zeta-(\ga-1)}{3\ga}$, and $\gamma= \zeta$, $\gamma=\zeta-\frac{\alpha-1}{\alpha}$ all merge to the line $\gamma=\zeta$.
}\label{F1}
\end{figure}

\subsection{Phase diagram for $\ga\in (0,1)$}
Let us highlight the main differences with the case $\alpha \in (1, 2]$:  the region $R_4$ no longer exists when $\ga<1$, and the region $R_2$ also disappears when $\ga<1/2$.
Indeed, region~$R_4$ corresponds to the case ``disorder"$\,\asymp\,$``range", in which we have $\xi=\frac{\ga}{1-\ga} (\gamma-\zeta)$:  it is easy to check that  for $\alpha\in(0,1)$ there is no $\gamma$ that can satisfy \eqref{E115}, which suggests that there is no ``disorder''-``range'' balance possible.
In the same manner, when $\ga \in (0,\frac12)$ there is no $\gamma$ that satisfy $\frac{1-\alpha}{\alpha} < \gamma <\frac{1}{2\alpha}$ (see the definition of~$R_2$ above),
which suggests that there is no ``disorder''-``entropy'' balance possible: region~$R_2$ no longer exists.
We also refer to Section~\ref{sec:comments} (Comment 2) for further comments on why regions~$R_4$ and $R_2$ disappear for $\ga<1$ and $\ga<1/2$ respectively.

All together, for $\ga \in (\frac12,1)$ we obtain the 
$(\zeta,\gamma)$-diagram presented in Figure~\ref{F2} below: the different regions are described as follows:
\begin{itemize}
\item[] $R_{1}=\left\{\xi=\tfrac{1}{2},~\gamma>\tfrac{1}{2\ga},~\zeta>\tfrac{1}{2}\right\},$
\item[]  $R_{2}=\left\{\xi=\tfrac{\ga}{2\ga-1}(1-\gamma),~\tfrac{1-\ga}{\ga}<\gamma<\tfrac{(2\ga-1)\zeta-(\ga - 1)}{\ga} \wedge \frac{1}{2\ga}\right\},$
\item[]  $R_{3}=\left\{\xi=1,~\gamma<\tfrac{1-\ga}{\ga},~\gamma<\zeta-\tfrac{\ga-1}{\ga}\right\},$
\item[]  $R_{5}=\left\{\xi=\tfrac{1+\zeta}{3},~\big(\tfrac{(2\ga-1)\zeta-(\ga - 1)}{\ga}\big)\wedge \big(\zeta-\tfrac{\ga-1}{\ga}\big)<\gamma,~-1<\zeta<\tfrac{1}{2}\right\},$
\item[]  $R_{6}=\left\{\xi=0,~\gamma>\zeta-\tfrac{\ga-1}{\ga},~\zeta<\frac{1}{\ga}-2\right\}.$
\end{itemize}

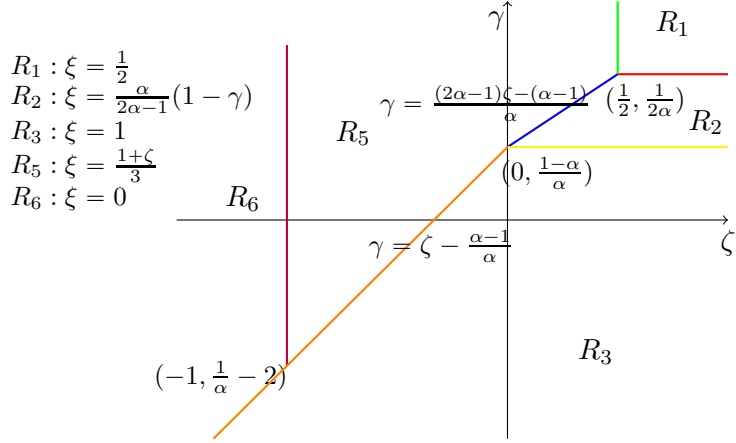
\begin{figure}[ht]
\centering
\begin{tikzpicture}[scale=2.9]
\draw[->](-1.5,0)--(1,0) node[anchor=north] {$\zeta$};
\draw[->](0,-1)--(0,1);
\draw (-0.05,1) node[anchor=north] {$\gamma$};
\draw[red,thick](0.5,2/3)--(1,2/3);
\draw[green,thick](0.5,2/3)--(0.5,1);
\draw (0.625,2/3) node[anchor=north] {\small $(\frac{1}{2},\frac{1}{2\ga})$};
\draw (0.75,1) node[anchor=north] {$R_{1}$};
\draw[blue,thick](0,1/3)--(0.5,2/3);
\draw (-0.1,2/3) node[anchor=north] {\small $\gamma=\frac{(2\ga-1)\zeta-(\ga-1)}{\ga}$};
\draw (0.9,0.55) node[anchor=north] {$R_{2}$};
\draw[yellow,thick](0,1/3)--(1,1/3);
\draw (0.18,1/3) node[anchor=north] {\small $(0,\frac{1-\ga}{\ga})$};
\draw (0.4,-1/2) node[anchor=north] {$R_{3}$};
\draw[purple,thick](-1,-2/3)--(-1,0.8);
\draw (-1.3,-0.6) node[anchor=north] {\small $(-1,\frac{1}{\ga}-2)$};
\draw (-0.7,0.5) node[anchor=north] {$R_{5}$};
\draw (-1.2,0.2) node[anchor=north] {$R_{6}$};
\draw[orange,thick](-4/3,-1)--(0,1/3);
\draw (-0.3,0) node[anchor=north] {\small $\gamma=\zeta-\frac{\alpha-1}{\alpha}$};
\draw (-2.3,0.7) node[anchor=west] {\small $R_{1}:\xi=\frac{1}{2}$};
\draw (-2.3,0.55) node[anchor=west] {\small $R_{2}:\xi=\frac{\ga}{2\ga-1}(1-\gamma)$};
\draw (-2.3,0.4) node[anchor=west] {\small $R_{3}:\xi=1$};
\draw 
(-2.3,0.25) node[anchor=west] {\small $R_{5}:\xi=\frac{1+\zeta}{3}$};
\draw 
(-2.3,0.1) node[anchor=west] {\small $R_{6}:\xi=0$};
\end{tikzpicture}
\caption{Phase diagram in the case $\ga\in(1/2,1)$. Compared to Figure~\ref{F1}, the region $R_4$ no longer exists.
}\label{F2}
\end{figure}

Finally, for $\ga \in (0, \frac12)$ we obtain the 
$(\zeta,\gamma)$-diagram presented in Figure~\ref{F3} below: the different regions are described as follows:
\nopagebreak
\begin{itemize}
\item[] $R_{1}=\left\{\xi=\tfrac{1}{2},~\gamma>\tfrac{1-\ga}{\ga},~\zeta>\tfrac{1}{2}\right\},$
\item[]  $R_{3}=\left\{\xi=1,~\gamma<\tfrac{1-\ga}{\ga},~\gamma<\zeta-\tfrac{\ga-1}{\ga}\right\},$
\item[] $R_{5}=\left\{\xi=\tfrac{1+\zeta}{3},~\tfrac{1-\ga}{\ga}\wedge \big(\zeta-\tfrac{\ga-1}{\ga}\big)<\gamma,~-1<\zeta<\tfrac{1}{2}\right\},$
\item[] $R_{6}=\left\{\xi=0,~\gamma>\zeta-\tfrac{\ga-1}{\ga},~\zeta<\frac{1}{\ga}-2\right\}.$
\end{itemize}

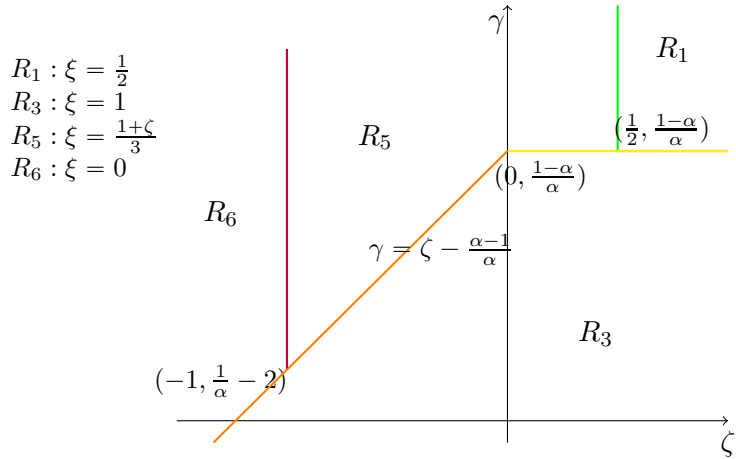
\begin{figure}[ht]
\centering
\begin{tikzpicture}[scale=2.9]
\draw[->](-1.5,-0.9)--(1,-0.9) node[anchor=north] {$\zeta$};
\draw[->](0,-1)--(0,1);
\draw (-0.05,1) node[anchor=north] {$\gamma$};
\draw[red,thick](0.5,1/3)--(1,1/3);
\draw[green,thick](0.5,1/3)--(0.5,1);
\draw (0.7,0.55) node[anchor=north] {\small $(\frac{1}{2},\frac{1-\ga}{\ga})$};
\draw (0.75,0.9) node[anchor=north] {$R_{1}$};
\draw[yellow,thick](0,1/3)--(1,1/3);
\draw (0.15,1/3) node[anchor=north] {\small $(0,\frac{1-\ga}{\ga})$};
\draw (0.4,-0.4) node[anchor=north] {$R_{3}$};
\draw[purple,thick](-1,-2/3)--(-1,0.8);
\draw (-1.3,-0.6) node[anchor=north] {\small $(-1,\frac{1}{\ga}-2)$};
\draw (-0.6,0.5) node[anchor=north] {$R_{5}$};
\draw (-1.3,0.15) node[anchor=north] {$R_{6}$};
\draw[orange,thick](-4/3,-1)--(0,1/3);
\draw (-0.3,0) node[anchor=north] {\small $\gamma=\zeta-\frac{\alpha-1}{\alpha}$};
\draw (-2.3,0.7) node[anchor=west] {\small $R_{1}:\xi=\frac{1}{2}$};
\draw 
(-2.3,0.55)node[anchor=west] {\small $R_{3}:\xi=1$};
\draw 
(-2.3,0.4) node[anchor=west] {\small $R_{5}:\xi=\frac{1+\zeta}{3}$};
\draw 
(-2.3,.25)node[anchor=west] {\small $R_{6}:\xi=0$};
\end{tikzpicture}
\caption{Phase diagram in the case $\ga\in(0,1/2)$. Compared to Figure~\ref{F2}, the region $R_2$ no longer exists.
}\label{F3}
\end{figure}

\begin{remark}
\label{rem:h}
In the case $\hat h<0$, 
one can conduct similar computation as in \eqref{E108}---\eqref{E115} and obtain a different phase diagram than those of Figures~\ref{F1}-\ref{F2}-\ref{F3}, see Figures~\ref{F4} and \ref{F5} below (note that regions $R_1,R_2,R_3$ are unchanged,
since the range term is negligible in these regions).
Let us stress that when $\hat h<0$, the ``disorder" and  ``range" terms both play in the same direction and encourage exploration, resulting in a much simpler diagram: only range size exponents $\xi\geq 1/2$ are possible, see Section~\ref{rem:h<0} below.
\end{remark}

\section{Main results}
\label{sec:MainResults}

Our main results consist in proving the phase diagrams of Figures~\ref{F1}-\ref{F2}-\ref{F3},
with a precise description of the behavior of the polymer in each region.
In order to state our results, let us introduce some definition.

\begin{definition}
\label{def:endtoend}
If $(t_N)_{N\geq 0}$ is a sequence of positive real numbers, 
we say that \emph{$(S_n)_{0\leq n \leq N}$ has range size of order $t_N$ under $\bP_{N,\gb_N,h_N}^{\go}$}
if 
\[
\lim_{\eta \downarrow 0} \limsup_{N\to+\infty} \bbE \Big[ \bP_{N,\gb_N,h_N}^{\go} \Big(  \max_{1\leq n \leq N} |S_n|  \in [\eta,\tfrac1\eta]\,  t_N  \Big) \Big] = 1\, .
\]
\end{definition}

\noindent
If $(S_n)_{0\leq n \leq N}$ has range size of order $N^{\xi}$ under $\bP_{N,\gb_N,h_N}^{\go}$, then
we say that the 
range size exponent is $\xi$.

In our results, we encounter doubly indexed
processes
\begin{equation}
\label{def:Y}
Y_{u,v} := X_v^{(1)}+X_u^{(2)}-f(u,v)  \quad \text{for } u,v\geq 0\,,
\end{equation}
where $X_v^{(1)}$ and $X_u^{(2)}$ are the L\'evy processes of Notation~\ref{defL} and~$f$ is some deterministic function (for instance, a large deviation rate function),
such that $(u,v)\mapsto f(u,v)$ is continuous
on the set where $f(u,v)<+\infty$, with $f(0,0)=0$.
We then have the following result about the maximizer of the variational problem $\sup_{u,v\geq 0} Y_{u,v}$, that we prove in Appendix~\ref{sec:unique}: it ensures the well-posedness of 
$\argmax_{u,v\geq 0} Y_{u,v}$.
\begin{proposition}\label{uniquemaximizer}
Suppose that $\bbP$-a.s.\ the variational problem $\sup_{u,v\geq 0} Y_{u,v}$
is positive with $Y$ defined in~\eqref{def:Y}, and that $Y_{u,v}\to -\infty$ as $\max(u, v) \to+\infty$. Then
\begin{equation*}
\bbP\Big(\argmax\limits_{u,v\geq 0}Y_{u,v}~\text{is a singleton}~\{(\cU,\cV)\}~\text{with}~\cU\neq\cV\Big)=1.
\end{equation*}
\end{proposition}

\noindent
Let us stress that in the case $\alpha=2$ of a Brownian motion, \cite[Lem.~2.6]{KP90}  (or \cite{Pim14}) proves the uniqueness of the maximizer for one-indexed processes, but not doubly-indexed ones.

\subsection{Statement of the results}
\label{sec:results}

We now prove six different theorems, corresponding to the six possible regions in the phase diagram presented in Figure~\ref{F1}.
We mention that we will use $\hat \bbP$ only when the coupling is needed; otherwise we will keep the notation $\bbP$ (in particular this will be the case when the limit of the rescaled log-partition function is non-random).

In this section, we again focus on the case $\hat h >0$,
but several results hold for a general $\hat h\in \bbR$: we will highlight when the results are specific to the case $\hat h >0$. The case $\hat h<0$ will be discussed separately in Section~\ref{rem:h<0}.
Note that the case $h=0$ or $\gb=0$
can be recovered by taking $\zeta =+\infty$ or $\gamma =+\infty$
respectively, while the case of constant $h$ or $\gb$ can be recovered by taking $\zeta =0$ or $\gamma =0$
respectively. One can then recover Theorem \ref{thm:fixed} from  Theorems~\ref{C102}-\ref{C103}-\ref{C104} and \ref{R4tilde} below.

\begin{theorem}[\textbf{Region 1}]\label{C101}
Assume that \eqref{def:betah} holds  with $\hat\gb\geq 0$, $\hat h \in \bbR$
and
$$
\begin{cases} \gamma>\frac{1}{2\ga} \ \ \text{and} \ \ \zeta>\frac12, 
& \quad \text{if}\ \  \alpha \in  [\frac{1}{2},1)\cup(1,2], \\
\gamma>\frac{1-\alpha}{\ga} \ \ \text{and} \ \ \zeta>\frac12, 
&\quad \text{if}\ \  \alpha \in (0,\frac{1}{2}).
\end{cases}
$$
Then  $(S_n)_{0\leq n\leq N}$ has range size of order $\sqrt{N}$ under $\bP_{N,\gb_N,h_N}^{\go}$ (\textit{i.e.}\ $\xi=\frac12$) and 
we have the following convergence
\begin{equation}\label{eq:th:C101}
\lim_{N\to+\infty} Z_{N,\beta_{N},h_{N}}^{\omega}  = 1  \qquad 
\text{$\bbP$-a.s.}\, .
\end{equation}
Moreover, we have  $\lim_{N\to\infty}\|\bP_{N,\gb_N,h_N}^{\go}-\bP\|_{\bT\bV} =0$ $\bbP$-a.s., where $\|\cdot\|_{\bT\bV}$ is the total variation distance.
\end{theorem}

Note that the total variation convergence stated in Theorem \ref{C101} implies that $S_N/\sqrt N$ converges in distribution to a Brownian motion. This convergence holds $\bbP$-almost surely.

\begin{theorem}[\textbf{Region 2}]\label{C102}
Assume that \eqref{def:betah} holds with $\hat\gb>0$, $\hat h \in \bbR$
and 
\[\tfrac{1-\ga}{\ga}<\gamma<\tfrac{(2\ga-1)\zeta-(\ga - 1)}{\ga} \wedge \tfrac{1}{2\ga} \quad \text{and} \quad \ga \in (\tfrac12,1)\cup(1,2].\]
Then $(S_n)_{0\leq n\leq N}$ has range size of order $N^{\xi}$
under $\bP_{N,\gb_N,h_N}^{\hat{\go}}$ with $\xi=\tfrac{\ga}{2\ga-1}(1-\gamma) \in (\frac12,1)$,  and  we have the following convergence  
\begin{equation}\label{E119}
\lim_{N\to\infty} \frac{1}{N^{\frac{\xi}{\ga}-\gamma}} \log Z_{N,\beta_{N},h_{N}}^{\hat\go} =  \cW_{2}
:=\sup\limits_{u,v\geq 0 }\left\{Y_{u,v}^{(2)}  \right\} 
\in(0,+\infty),  \qquad \text{ in $\hat\bbP$-probability} \, ,
\end{equation}
where $Y_{u,v}^{(2)}  =Y_{u,v}^{\hat \gb,  (2)} = \hat \gb (X_{v}^{(1)}  + X_{u}^{(2)} ) - \frac{1}{2} ( u\wedge v +u+v)^2$ is as defined in Theorem \ref{thm:fixed}-\eqref{thm:fixedb}.
Additionally, for any $\gep >0$  we have
\[ \lim_{N\to\infty} \bP_{N,\gb_N,h_N}^{\hat\go}\Big( \Big|\frac{1}{N^{\xi}} (M_N^-, M_N^+ )   - (-\cU^{(2)}, \cV^{(2)})\Big|>\epsilon  \Big) =  0 \,, \qquad
\text{in $\hat\bbP$-probability},
\]
where $(\cU^{(2)}, \cV^{(2)}) := \argmax_{u,v\geq 0}  \big\{ Y_{u,v}^{ (2)} \big\}$ is well-defined
thanks to Proposition~\ref{uniquemaximizer}.
\end{theorem}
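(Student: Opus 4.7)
The plan is a coarse-graining on the range extremes $(M_N^-, M_N^+)$, which reduces $N^{-(2\xi-1)}\log Z_{N,\beta_N,h_N}^\omega$ to a two-parameter variational problem whose limit is identified via the Skorokhod coupling between $(\omega_x)_{x\in\bbZ}$ and the limiting process $X$. First, since the range of a simple symmetric $\bbZ$-walk is always an integer interval $[M_N^-,M_N^+]$, I write
\[
Z_{N,\beta_N,h_N}^\omega \ =\ \sum_{a\le 0\le b}\bP(M_N^-=a,\,M_N^+=b)\,\exp\!\Bigl(\beta_N\!\!\sum_{x=a}^{b}\omega_x - h_N(b-a+1)\Bigr),
\]
and substitute $(a,b)=(\lfloor uN^\xi\rfloor, \lfloor vN^\xi\rfloor)$. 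The three terms then rescale cleanly: a joint refinement of \eqref{E106} (obtained by a ``reach one endpoint, then the other'' decomposition) gives $\bP(M_N^-=a, M_N^+=b) = \exp(-(1+o(1))N^{2\xi-1}I(u,v))$, uniformly on compacts bounded away from $(0,0)$; the Skorokhod coupling gives $\beta_N\sum_{x=a}^{b}\omega_x = \hat\beta N^{\xi/\alpha-\gamma}(X_v-X_u+o(1))$, and the defining identity of Region~2 is precisely $\xi/\alpha-\gamma = 2\xi-1$; finally the assumption $\gamma<[(2\alpha-1)\zeta-(\alpha-1)]/\alpha$ is equivalent to $\xi-\zeta<2\xi-1$, so $h_N(b-a+1)=O(N^{\xi-\zeta})=o(N^{2\xi-1})$ on compacts, regardless of the sign of $\hat h$.

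I then perform Laplace asymptotics. Split $Z_N=\Sigma_{\le M}+\Sigma_{>M}$ according to whether $|u|\vee|v|\le M$, and partition $\Sigma_{\le M}$ into cells of side $\epsilon N^\xi$. On each cell the exponent is majorized by its cellwise supremum combined with the entropy estimate; a matching lower bound is obtained by restricting the sum to one cell centered at a near-maximizer of $Y^{(R_2)}$. Taking $\log$, dividing by $N^{2\xi-1}$, and sending $N\to\infty$, then $\epsilon\to 0$, then $M\to\infty$ yields
\[
\tfrac{1}{N^{2\xi-1}}\log Z_{N,\beta_N,h_N}^\omega \ \longrightarrow\ \sup_{u\le 0\le v}\bigl\{\hat\beta(X_v-X_u)-I(u,v)\bigr\} \ =\ \cW_{R_2},\qquad \bbP\text{-a.s.}
\]
The tail $\Sigma_{>M}$ is absorbed because $I(u,v) \gtrsim (|u|+|v|)^2$ eventually dominates $\hat\beta(X_v-X_u) = O((|u|+|v|)^{1/\alpha})$ at infinity; the hypothesis $\alpha>1/2$ (so $1/\alpha<2$) is precisely what makes this work, and it also shows $\cW_{R_2}<\infty$ a.s. Positivity of $\cW_{R_2}$ follows from standard small-time behavior of $X$ (LIL for $\alpha=2$, Blumenthal's $0$--$1$ law and self-similarity for $\alpha<2$) applied at $(u,v)=(0,\epsilon)$, where $I(0,\epsilon)=\epsilon^2/2 \ll \hat\beta X_\epsilon$ with positive probability.

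For the endpoint localization, I note that any $(u,v)\notin\cM_\epsilon(Y^{(R_2)})$ satisfies $Y^{(R_2)}_{u,v}\le\cW_{R_2}-\eta(\epsilon)$ for some a.s.\ positive $\eta(\epsilon)$, so the contribution to $Z_N$ from such $(u,v)$ is at most $\exp(N^{2\xi-1}(\cW_{R_2}-\eta(\epsilon)))$. Dividing by the established lower bound $Z_N\ge \exp(N^{2\xi-1}(\cW_{R_2}-o(1)))$ yields the claimed Gibbs probability bound. End-to-end fluctuations of exact order $N^\xi$ then follow from the localization, since $\cM(Y^{(R_2)})$ is a.s.\ bounded (by the tail estimate above) and a.s.\ bounded away from $(0,0)$ (by positivity of $\cW_{R_2}$).

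The main obstacle lies in the upper bound of the Laplace step when $\alpha<2$: the partial sums $N^{-\xi/\alpha}\sum\omega_x$ converge to $X_v-X_u$ only in the Skorokhod $J_1$ topology, so a single atypically large $\omega_x$ inside a cell may push its cellwise maximum above the ``true'' value of $X_v-X_u$. One must therefore truncate at a threshold $t_N\asymp N^{\xi/\alpha}$, handle the truncated bulk via a Fuk--Nagaev or Bernstein-type concentration inequality, account for each ``big jump'' individually, and let the mesh $\epsilon=\epsilon_N$ shrink sufficiently slowly with $N$. A secondary subtlety is that $\cM(Y^{(R_2)})$ need not reduce to a single point when $X$ has symmetric heavy tails (cf.\ Remark~\ref{rem:thmfixed}), which is precisely why the endpoint convergence is phrased in terms of the $\epsilon$-neighborhood $\cM_\epsilon$ rather than as a pointwise limit.
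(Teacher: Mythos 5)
Your proposal is correct and follows essentially the same route as the paper: an exact decomposition over the range extremes, coarse-graining of $(M_N^-,M_N^+)$ into cells of side $\delta N^{\xi}$, the joint large-deviation estimate of Lemma~\ref{lem:superdiffusive}, the exponent identities $\xi/\alpha-\gamma=2\xi-1>\xi-\zeta$, a Laplace argument with limits taken in the order $N\to\infty$, then mesh $\to 0$, then cutoff $\to\infty$, a.s.\ positivity and finiteness of $\cW_{R_2}$ from the small- and large-time growth of $X$, and localization of $(M_N^-,M_N^+)$ via a uniform deficit of $Y^{(R_2)}$ off $\cM_{\gep}$ (which does require the compactness argument the paper spells out, since $Y^{(R_2)}$ is not continuous). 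Two remarks: the within-cell ``big jump'' issue you single out as the main obstacle is handled in the paper without truncation or Fuk--Nagaev bounds, simply by carrying the oscillation term $R_N^{\gd}(u,v)$ explicitly in both the upper and lower bounds --- under the coupling it converges to $\hat\gb\sup_{0\le t\le\delta}|X_{\cdot+t}-X_{\cdot}|$, which vanishes as $\delta\downarrow 0$ by right-continuity of the paths --- and your tail control for $|u|\vee|v|>M$ is phrased as a statement about the limiting process, whereas what is actually needed (and what Lemma~\ref{lem:R2step2} provides, via the maximal inequality of Lemma~\ref{lem1} applied over dyadic blocks) is a quantitative finite-$N$ bound on $Z_{N}^{\omega}(M_N^*>AN^{\xi})$; the pathwise growth comparison you give is the right mechanism but is not itself the estimate.
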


Let us stress that the case $\ga = 2$, $\gb=\gb_N \equiv \beta>0$ and $h\equiv 0$
corresponds to the case $\gamma=0$ and $\zeta=+\infty$:
we find in that case that the range size exponent is $\xi=\frac23$.

\begin{theorem}[\textbf{Region 3}]\label{C103}
Assume that \eqref{def:betah} holds  with $\hat\gb>0$, $\hat h \in \bbR$ and
\[
\gamma<\big(\zeta-\tfrac{\ga-1}{\ga}\big) \wedge \big(\tfrac{1-\ga}{\ga}\big) \quad \text{and} \quad \ga \in (0,1)\cup(1,2].\]
Then $(S_n)_{0\leq n\leq N}$ has range size of order $N$ under $\bP_{N,\gb_N,h_N}^{\hat{\go}}$ (\textit{i.e.}\ $\xi=1$), and we have the following convergence  
\begin{equation}\label{E120}
\lim_{N\to\infty} \frac{1}{N^{\frac{1}{\ga}-\gamma}} \log Z_{N,\beta_{N},h_{N}}^{\hat\omega} 
= \cW_{3} 
:=\sup\limits_{u,v\geq 0 }\left\{Y_{u,v}^{(3)}  \right\} 
\in(0,+\infty) \,, \qquad
\text{ $\hat\bbP$-a.s.} \,,
\end{equation}
where $Y_{u,v}^{ (3)}  = Y_{u,v}^{\hat \gb, (3)}  = \hat \gb (X_{v}^{(1)}  + X_{u}^{(2)})  - \infty \ind_{\{u\wedge v +u+v > 1\} }$ is as defined in Theorem \ref{thm:fixed}-\eqref{thm:fixedc}.
Additionally, 
for any $\gep>0$ we have 
\[
\lim_{N\to\infty} \bP_{N,\gb_N,h_N}^{\hat\go}  \Big(\Big| \frac{1}{N} (M_N^-, M_N^+ )  -(-\cU^{(3)}, \cV^{(3)}) \Big|>\gep \Big)   =  0 \,,  \qquad
\text{$\hat\bbP$-a.s.}
\]
where $(\cU^{(3)},\cV^{(3)}):=\argmax_{u,v\geq 0}  \big\{ Y_{u,v}^{ (3)} \big\}$ 
is well-defined
thanks to Proposition~\ref{uniquemaximizer}.
\end{theorem}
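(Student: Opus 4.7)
The plan is to prove both assertions simultaneously by exploiting the key one-dimensional feature $\cR_N = [M_N^-, M_N^+]\cap \bbZ$, which allows one to write
\[
Z_{N,\gb_N,h_N}^\go \;=\; \sum_{a \le 0 \le b} \bP(M_N^- = a,\, M_N^+ = b)\,\exp\!\Big(\gb_N \sum_{x=a}^{b} \go_x - h_N(b-a+1)\Big),
\]
where the summand vanishes unless $|a|\wedge b + (b-a) \le N$, i.e.\ unless the rescaled pair $(a/N, b/N)$ lies in the support $\{|u|\wedge v + v - u \le 1\}$ of $Y^{(R_3)}$. Note that the two conditions defining $R_3$ read exactly $\tfrac{1}{\alpha}-\gamma > 1$ (from $\gamma < \tfrac{1-\alpha}{\alpha}$) and $\tfrac{1}{\alpha}-\gamma > 1-\zeta$ (from $\gamma < \zeta - \tfrac{\alpha-1}{\alpha}$), which is what makes the disorder term dominate both entropy and range penalty.

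For the \emph{upper bound}, the Skorokhod coupling of Definition \ref{defL} yields, uniformly in $a \in [-N,0]$ and $b \in [0,N]$, $\gb_N \sum_{x=a}^{b} \go_x = \hat\gb\, N^{1/\alpha - \gamma}\bigl(X_{b/N} - X_{a/N} + o(1)\bigr)$ almost surely, while $|h_N(b-a+1)| = O(N^{1-\zeta})$. Using the trivial bound $\bP(M_N^-=a, M_N^+=b) \le 1$, summing over the $O(N^2)$ admissible pairs and taking the logarithm gives $\limsup N^{-(1/\alpha - \gamma)} \log Z_{N,\gb_N,h_N}^\go \le \cW_{R_3}$ almost surely, since the two other contributions are of lower order than $N^{1/\alpha-\gamma}$.

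For the \emph{lower bound}, fix $\epsilon > 0$ and pick a \emph{deterministic} pair $(u^*, v^*)$ in the strict interior $\{|u|\wedge v + v - u < 1\}$ with $Y^{(R_3)}_{u^*,v^*} \ge \cW_{R_3} - \epsilon$. An explicit path construction---send the walk ballistically to $\lfloor u^* N\rfloor$, then to $\lceil v^* N\rceil$, then fill the remaining $O(N)$ steps by small oscillations that do not exit $[\lfloor u^* N\rfloor, \lceil v^* N\rceil]$---shows that $\bP(M_N^- = \lfloor u^* N\rfloor, M_N^+ = \lceil v^* N\rceil) \ge e^{-CN}$ for some $C=C(u^*,v^*) < \infty$; restricting $Z_N$ to this event and applying the coupling once more gives $\liminf N^{-(1/\alpha-\gamma)}\log Z_N \ge \hat\gb(X_{v^*}-X_{u^*}) - o(1) \ge \cW_{R_3}-\epsilon$, again using $\tfrac{1}{\alpha}-\gamma > 1$ to absorb the entropic $-CN$. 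Almost-sure finiteness and positivity of $\cW_{R_3}$ follow from boundedness of the oscillation of $X$ on $[-1,1]$ and non-degeneracy of $X$ near $0$.

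The \emph{concentration of $(M_N^-/N, M_N^+/N)$ on $\cM_\epsilon(Y^{(R_3)})$} is then a direct corollary: splitting $Z_N = Z_N^{\rm good} + Z_N^{\rm bad}$ according to whether $(M_N^-/N, M_N^+/N) \in \cM_\epsilon(Y^{(R_3)})$, the upper-bound argument restricted to the bad event yields $\limsup N^{-(1/\alpha-\gamma)}\log Z_N^{\rm bad} \le \cW_{R_3} - \delta$ a.s.\ for some $\delta=\delta(\epsilon)>0$ (by the very definition of quasi-maximizers), so $\bP_{N,\gb_N,h_N}^\go(\text{bad}) = Z_N^{\rm bad}/Z_N \to 0$. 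The main technical obstacle is the $e^{-CN}$ lower bound on the random walk probability of reaching prescribed extremes, which must keep explicit track of the slack $1 - (|u^*|\wedge v^* + (v^*-u^*)) > 0$, and ensuring that the Skorokhod coupling is valid uniformly in $(a,b)$---a delicate point when $\alpha \in (0,1)$, since a single large jump of $X$ may dominate the supremum and one must identify precisely which $\go_x$ produces it in the pre-limit sum.
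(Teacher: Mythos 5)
Your overall strategy is the paper's own, up to cosmetic choices: you decompose $Z_N$ over the endpoints $(a,b)=(M_N^-,M_N^+)$ of the range (the paper uses blocks of side $\delta N$ rather than single lattice points), you observe that the two inequalities defining $R_3$ say precisely that the disorder scale $N^{1/\alpha-\gamma}$ beats both the entropy scale $N$ and the range scale $N^{1-\zeta}$, you get the lower bound by targeting one deterministic near-maximizer at entropic cost $e^{-CN}$ (the paper uses the cruder $\bP(\cdot)\ge 2^{-N}$ for every block, which is the same idea), and you localize $(M_N^-,M_N^+)$ by a good/bad splitting of the partition function. The genuine gap is in your upper bound. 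The claim that $\gb_N\sum_{x=a}^b\go_x=\hat\gb N^{1/\alpha-\gamma}\big(X_{b/N}-X_{a/N}+o(1)\big)$ \emph{uniformly} in $(a,b)$ is false whenever $\alpha<2$: the coupling gives a.s.\ convergence of $u\mapsto N^{-1/\alpha}\Omega^{\pm}_{\lfloor uN\rfloor}$ to $X$ only in the Skorokhod $J_1$ sense, and near a jump of $X$ of size $J$ the pointwise discrepancy between the prelimit sum and the limit is of order $J$, not $o(1)$, because the discrete process places its jump at a slightly shifted location. Since for $\alpha<2$ the supremum $\cW_{R_3}$ is typically realized by exactly such jumps, this is not a removable technicality but the heart of the step; you flag it yourself at the end but do not resolve it. What is true, and what one must prove, is that the \emph{maximum over admissible $(a,b)$} of the rescaled sums converges to the constrained supremum of $X_v-X_u$. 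The paper's device is to coarse-grain into $\delta N$-blocks, absorb the within-block error into the oscillation terms $R_N^{\gd}(u,v)$ (which converge for fixed $\delta$ by $J_1$-continuity of the supremum functional), and only then send $\delta\downarrow0$ using the c\`adl\`ag regularity of $X$; your finest-scale decomposition skips exactly this control.

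A secondary gap is in the localization step: you assert that the supremum of $Y^{(R_3)}$ off $\cM_\epsilon(Y^{(R_3)})$ is at most $\cW_{R_3}-\delta(\epsilon)$ ``by the very definition of quasi-maximizers''. The definition gives only a pointwise strict inequality, and since $(u,v)\mapsto Y^{(R_3)}_{u,v}$ is not continuous, extracting a uniform gap $\delta(\epsilon)>0$ over the (compact) admissible region requires an additional compactness argument; this is what the paper does with the auxiliary sets $\cU^{\gep,\gep'}$ and the contradiction/subsequence argument carried over from Region~$R_2$.
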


\begin{theorem}[\textbf{Region 4}]\label{C106}
Assume that \eqref{def:betah} holds with $\hat\gb>0$, $\hat h >0$ and
\[
\big(\tfrac{(2\ga-1)\zeta-(\ga - 1)}{\ga} \big)\vee \big(\zeta-\tfrac{\ga-1}{\ga}\big)<\gamma<\big(\tfrac{(2\ga+1)\zeta-(\ga - 1)}{3\ga}\big) \wedge \zeta \quad \text{and} \quad \ga \in (1,2].\]
Then $(S_n)_{0\leq n\leq N}$ has range size of order $N^{\xi}$
under $\bP_{N,\gb_N,h_N}^{\hat{\go}}$ with  $\xi =\frac{\alpha}{\alpha-1}(\zeta-\gamma) \in (0,1)$, and we have the following convergence
\begin{equation}\label{E123}
\lim_{N\to\infty} \frac{1}{N^{\xi-\zeta}} \log Z_{N,\beta_{N},h_{N}}^{\hat\omega}
=\cW_{4} := \sup\limits_{u,v\geq 0}\left\{Y_{u,v}^{(4)}\right\}\in (0, +\infty)\,,  \qquad  \text{in $\hat\bbP$-probability}\,,
\end{equation}
where $Y_{u,v}^{ (4)} =Y_{u,v}^{\hat\gb, \hat h, (4)} :=\hat{\beta}(X_{v}^{(1)}  + X_{u}^{(2)})-\hat{h}(u+v)$.
Additionally, 
for any $\gep>0$ we have 
\[
\lim_{N\to\infty}  \bP_{N,\gb_N,h_N}^{\hat\go}  \Big( \Big| \frac{1}{N^{\xi}} (M_N^-, M_N^+ )   -(-\cU^{(4)}, \cV^{(4)})\Big|>\epsilon \Big) 
=  0   \,,  \qquad \text{in $\hat\bbP$-probability}
\]
 where $(\cU^{(4)}, \cV^{(4)}) :=\argmax_{u,v\geq0}\big\{Y_{u,v}^{ (4)}\big\}$ is well-defined thanks to Proposition~\ref{uniquemaximizer}.
\end{theorem}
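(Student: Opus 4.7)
The plan is to establish \eqref{E123} by matching upper and lower bounds on $\log Z_{N,\beta_N,h_N}^\omega$, leveraging the fact that in Region 4 the ``entropy cost'' $N^{(2\xi-1)\vee(1-2\xi)}$ is negligible compared to the ``energy--range'' balance $N^{\xi-\zeta}$. Since the range of a 1D simple random walk is the interval $[M_N^-,M_N^+]$, we have the identity
\[
Z_{N,\beta_N,h_N}^\omega \,=\, \sumtwo{a\leq 0}{b\geq 0} \bP\big(M_N^-=a,\, M_N^+ = b\big)\, \exp\bigg(\beta_N \sum_{x=a}^{b} \go_x - h_N(b-a+1)\bigg).
\]
With the substitution $a=\lfloor u N^\xi\rfloor$, $b=\lceil v N^\xi\rceil$, and using the identity $\tfrac{\xi}{\ga}-\gamma=\xi-\zeta$ built into the definition of $\xi$ in Region 4, the Skorokhod coupling from Assumption~\ref{assump1} shows that the exponent equals $N^{\xi-\zeta}\bigl(Y^{(R_4)}_{u,v}+o(1)\bigr)$ for bounded $(u,v)$.

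For the \emph{upper bound}, I would first truncate the sum to $|a|\vee b\leq A N^\xi$ for some large $A$: on the complement, Assumption~\ref{assump1} plus a dyadic union bound yield $\beta_N\bigl|\sum_{x=a}^{b}\go_x\bigr|\leq C\hat\beta N^{-\gamma}(b-a)^{1/\ga}$, while the penalty is $h_N(b-a)=\hat h N^{-\zeta}(b-a)$; since $(b-a)^{1/\ga-1}\ll N^{\gamma-\zeta}$ once $b-a\gg N^\xi$, the range penalty dominates and renders these contributions negligible. Within the truncated region, discretizing $(u,v)$ on a grid of spacing $\delta$ and using the trivial bound $\bP(M_N^-=a,M_N^+=b)\leq 1$ yields
\[
Z_N^\omega \,\leq\, O(\delta^{-2})\cdot \sup_{|u|,v\leq A,\ (u,v)\in\delta\bbZ^2} \exp\bigl( N^{\xi-\zeta}(Y^{(R_4)}_{u,v}+o(1))\bigr),
\]
the prefactor contributing only $O(\log\delta^{-1})=o(N^{\xi-\zeta})$ to the logarithm. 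Sending $N\to\infty$ and then $\delta\to 0$ gives $\limsup N^{-(\xi-\zeta)}\log Z_N^\omega\leq \cW_{R_4}$. For the \emph{lower bound}, pick a near-maximizer $(u^*,v^*)$ with $Y^{(R_4)}_{u^*,v^*}>\cW_{R_4}-\gep$ and restrict the sum to $(a,b)=(\lfloor u^* N^\xi\rfloor,\lceil v^* N^\xi\rceil)$: the entropy factor $\bP(M_N^-=a,M_N^+=b)$ is at least $\exp\bigl(-c\, N^{(2\xi-1)\vee(1-2\xi)}\bigr)$ by Lemmas~\ref{lem:superdiffusive}--\ref{lem:subdiffusive}, and this is $e^{o(N^{\xi-\zeta})}$ precisely under the Region 4 constraints on $(\gamma,\zeta)$ checked in the heuristics of Section~\ref{sec:heuristics}. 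Passing to the limit yields the matching lower bound.

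The \emph{localization} statement follows by a standard splitting argument: decompose $Z_N^\omega$ into contributions with $(M_N^-,M_N^+)/N^\xi$ inside and outside $\cM_\gep(Y^{(R_4)})$, and apply the upper-bound analysis to the outside part; by the very definition of quasi-maximizers, $\sup_{(u,v)\notin \cM_\gep} Y^{(R_4)}_{u,v}<\cW_{R_4}$, so this outside contribution decays like $\exp\bigl(-c\gep N^{\xi-\zeta}\bigr)$ relative to the full partition function. End-to-end fluctuations of order $N^\xi$ follow from this, together with the almost-sure finiteness of $\cW_{R_4}=\sup Y^{(R_4)}$ (since $\hat h>0$ creates a confining drift) and its almost sure positivity (by local fluctuations of $X$ near the origin, which dominate the linear penalty on small scales), which together force $\cM_\gep(Y^{(R_4)})$ to be bounded and bounded away from $(0,0)$.

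The main technical obstacle is the uniform deviation control needed to discard the contribution of anomalously large ranges $b-a\gg N^\xi$ in the upper bound when $\ga<2$: heavy-tailed $\go_x$ can produce atypically large partial sums on rare intervals, and a naive union bound is insufficient. This is handled by a dyadic decomposition over the scales of $b-a$, combined at each scale with concentration estimates tailored to the tail index $\ga$; crucially, the decay $-h_N(b-a)$ grows linearly in the scale while the disorder gain grows only like $(b-a)^{1/\ga}$, which provides enough headroom to absorb the polynomial union-bound factors.
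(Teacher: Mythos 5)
Your proposal follows essentially the same route as the paper's proof: decompose the partition function over the endpoints $(M_N^-,M_N^+)$, kill the contribution of ranges much larger than $N^{\xi}$ by a dyadic union bound in which the linear range penalty $h_N(b-a)$ beats the heavy-tailed disorder gain of order $N^{-\gamma}(b-a)^{1/\ga}$ (this is exactly the paper's Lemma~\ref{lem1} combined with Lemma~\ref{R4LM2}, where $\ga>1$ makes the dyadic sum converge), pass to the variational problem on the bulk via a $\delta$-grid and the Skorokhod coupling (Lemma~\ref{R4LM1}), use $\xi-\zeta>|2\xi-1|$ to discard the entropy, and obtain positivity and finiteness of $\cW_{R_4}$ from the small-time and large-time growth of the L\'evy process exactly as in Lemma~\ref{R4LM3}, with the localization and end-to-end statements following by the same restriction argument. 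The only step your sketch leaves implicit is the uniform control, over each grid cell, of $\sup_{(a,b)}$ of the disorder partial sums (the paper's oscillation term $R_N^{\delta}$, which is controlled via the c\`adl\`ag regularity of the limiting process and vanishes only after the iterated limit $N\to\infty$ then $\delta\downarrow 0$); this is what justifies replacing the sum over the polynomially many endpoint pairs in a cell by the value of $Y^{(R_4)}$ at the grid corner, and your two-limit structure is set up to accommodate it.
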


\begin{theorem}[\textbf{Region 5}]\label{C104}
Assume that \eqref{def:betah} holds  with $\hat\gb>0$, $\hat h>0$ and
\[
\begin{cases} \gamma>\tfrac{(2\ga+1)\zeta-(\ga - 1)}{3\ga}\ \ \text{and}\ \ -1<\zeta<\tfrac{1}{2},  &\quad  \text{if}\ \ \alpha \in (1, 2]\,, \\
\gamma > \big(\tfrac{(2\ga-1)\zeta-(\ga - 1)}{\ga}\big)\wedge \big(\zeta-\tfrac{\ga-1}{\ga}\big) \ \ \text{and}\ \  -1<\zeta<\tfrac{1}{2},
& \quad \text{if}\ \  \alpha \in (\frac{1}{2},1) \,,\\
\gamma >\big(\tfrac{1-\ga}{\ga} \big)\wedge \big(\zeta-\tfrac{\ga-1}{\ga}\big)  \ \ \text{and}\ \  -1<\zeta<\tfrac{1}{2}, 
& \quad \text{if}\ \  \alpha \in (0,\frac{1}{2}) \, .
\end{cases}
\]
Then $(S_n)_{0\leq n\leq N}$ has range size of order $N^{\xi}$
under $\bP_{N,\gb_N,h_N}^{\go}$ with  $\xi = \frac{1+\zeta}{3} \in (0,\frac12)$ , and we have the following convergence 
\begin{equation}\label{E121}
 \lim_{N\to+\infty}  \frac{1}{N^{\xi-\zeta}} \log Z_{N,\beta_{N},h_{N}}^{\omega} =  -\frac{3}{2} (\hat h \pi)^{2/3}  =\sup_{r\geq 0}\Big\{-\hat{h}r-\frac{\pi^2}{2 r^2}\Big\}
  \qquad \text{$\bbP$-a.s.}
\end{equation}
Additionally, for every $\gep>0$, 
we have
\[
\lim_{N\to+\infty} \bP_{N,\gb_N,h_N}^{\go} \Big( \Big| \frac{1}{N^{\xi}} ( M_N^+ - M_N^-) - \pi^{\frac{2}{3}} \hat h^{-\frac{1}{3}} \Big| >\gep \Big) = 0  \qquad \text{$\bbP$-a.s.}
\]
\end{theorem}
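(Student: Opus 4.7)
The strategy is to show that in Region~5 the disorder contribution $\beta_N \sum_{x\in\mathcal{R}_N}\omega_x$ is subleading, so that $\log Z_{N,\beta_N,h_N}^{\omega}$ is driven by a balance between the range penalty $h_N|\mathcal{R}_N|$ and the entropic confinement cost. With $\xi=(1+\zeta)/3$ one has the key identity $1-2\xi=\xi-\zeta$, so that both the range term ($h_N L\sim \hat{h}rN^{\xi-\zeta}$ for $L=rN^{\xi}$) and the confinement cost ($\pi^2 N/(2L^2)\sim \frac{\pi^2}{2r^2}N^{\xi-\zeta}$) live on the scale $N^{\xi-\zeta}$. Optimizing the sum $\hat{h}r+\pi^2/(2r^2)$ over $r>0$ yields $r_*=(\pi^2/\hat{h})^{1/3}$ and value $\frac{3}{2}(\pi\hat{h})^{2/3}$, in agreement with the announced limit.

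First I would prove a uniform negligibility of disorder: fixing $L_N=N^{\xi+\epsilon}$, a union bound over dyadic scales (combined with Chebyshev for $\alpha=2$ or tail estimates based on Assumption~\ref{assump1} for $\alpha\in(1,2)$) gives
\[
\sup_{\substack{a\leq 0\leq b\\ b-a\leq L_N}}\Big|\sum_{x=a}^{b}\omega_{x}\Big|\leq L_N^{1/\alpha}(\log N)^{C}
\]
on a $\bbP$-event of probability tending to $1$. The constraint $\gamma>\frac{(2\alpha+1)\zeta-(\alpha-1)}{3\alpha}$ is \emph{exactly} what ensures $\beta_N L_N^{1/\alpha}=o(N^{\xi-\zeta})$, so on this good event the whole disorder contribution is $o(N^{\xi-\zeta})$ uniformly over the trajectories that matter.

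For the upper bound, decompose $Z_{N,\beta_N,h_N}^{\omega}$ according to $L=M_N^{+}-M_N^{-}+1$. The classical spectral estimate for simple random walk confined to an interval of width~$L$ gives $\mathbf{P}(M_N^{+}-M_N^{-}\leq L)\leq CL\exp(-\pi^{2}N/(2L^{2}))$; combined with the negligibility above and summed over $L$,
\[
\log Z_{N,\beta_N,h_N}^{\omega}\leq -\inf_{L\geq 1}\Big\{\frac{\pi^{2}N}{2L^{2}}+h_N L\Big\}+o(N^{\xi-\zeta})\, .
\]
Rescaling $L=rN^{\xi}$, the infimum converges to $\frac{3}{2}(\pi\hat{h})^{2/3}N^{\xi-\zeta}(1+o(1))$. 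For the matching lower bound, I would restrict the expectation defining $Z_{N,\beta_N,h_N}^{\omega}$ to trajectories staying inside $[-\lfloor rN^{\xi}/2\rfloor,\lceil rN^{\xi}/2\rceil]$ for an $r$ close to $r_*$; standard confinement lower bounds give probability at least $\exp(-(1+o(1))\pi^{2}N/(2(rN^{\xi})^{2}))$, the range penalty contributes exactly $-h_N(rN^{\xi}+O(1))$, and the disorder is absorbed in an $o(N^{\xi-\zeta})$ error by the previous step. Taking $r\to r_*$ completes the convergence \eqref{E121}.

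The end-to-end control follows from strict concavity of $r\mapsto -\hat{h}r-\pi^{2}/(2r^{2})$: restricting the upper-bound decomposition to $\{|L/N^{\xi}-r_*|>\epsilon\}$ shows that this truncated sum is smaller than the unrestricted one by a factor $\exp(-c_{\epsilon}N^{\xi-\zeta})$, so the polymer probability of $\{|\frac{1}{N^{\xi}}(M_N^{+}-M_N^{-})-\pi^{2/3}\hat{h}^{-1/3}|>\epsilon\}$ tends to $0$ in $\bbP$-probability. The main technical obstacle will be the uniform disorder control of Step~1 in the heavy-tail regime $\alpha\in(1,2)$, where one must handle both the bulk and the extremes of the $\omega_x$'s simultaneously over all sub-intervals up to scale $N^{\xi+\epsilon}$; once that is in place, the remaining arguments are a fairly standard Laplace-type analysis around the optimizer $r_*$.
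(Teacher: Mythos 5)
Your overall strategy coincides with the paper's: show the disorder is subleading, then run a Laplace-type analysis of the range penalty against the confinement entropy on the common scale $N^{\xi-\zeta}=N^{1-2\xi}$, and extract the localization of $M_N^+-M_N^-$ from the uniqueness of the optimizer $r_*=\pi^{2/3}\hat h^{-1/3}$. The lower bound and the identification of the constant are fine. However, there is a genuine gap in your upper bound, precisely at the point where the paper invests most of its effort (its Lemma~\ref{R5LM2}). Your disorder control only covers intervals of length at most $L_N=N^{\xi+\epsilon}$, and the condition $\gamma>\frac{(2\alpha+1)\zeta-(\alpha-1)}{3\alpha}$ indeed gives $\beta_N L_N^{1/\alpha}=o(N^{\xi-\zeta})$ on that window. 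But your decomposition of $Z_{N,\beta_N,h_N}^{\omega}$ runs over all $L$ up to $N$, and for $L\gg N^{\xi}$ the disorder term $\beta_N\Omega_L^*$ is \emph{not} $o(N^{\xi-\zeta})$ in general: already for fixed $\beta,h$ ($\gamma=\zeta=0$, $\alpha=2$, $\xi=1/3$) one has $\beta_N\Omega_N^*\approx N^{1/2}\gg N^{1/3}=N^{\xi-\zeta}$. Declaring that such trajectories "do not matter" is circular; one must show that on each dyadic scale $L\approx 2^kAN^{\xi}$ the disorder gain $\beta_N\Omega_{2^kAN^\xi}^*$ is beaten by the range penalty $h_N 2^kAN^{\xi}$ (which grows linearly in $L$, unlike $N^{\xi-\zeta}$), via a union bound and the tail estimate $\bbP(\Omega_\ell^*>\T)\le c\,\ell\,\T^{-\alpha}$ of Lemma~\ref{lem1}. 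This is exactly where the hypotheses on $\gamma$ enter, and it is why they differ between $\alpha\in(1,2]$ and $\alpha\in(0,1)$.

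Relatedly, your proposal only engages with $\alpha\in(1,2]$ ("Chebyshev for $\alpha=2$ or tail estimates for $\alpha\in(1,2)$"), whereas the theorem covers $\alpha\in(0,2]$ with different constraints on $\gamma$. For $\alpha\in(0,1)$ and $\zeta\in(0,\frac12)$ the range penalty alone does not suffice at the largest scales: the paper must further split $L\in(AN^{\xi},N^{1-\zeta}]$ (where the range penalty wins) from $L\in(N^{1-\zeta},N]$ (where the Gaussian entropy cost $e^{-cL^2/N}$ is needed to beat the heavy-tailed disorder), and the stated conditions $\gamma>\frac{1-\alpha}{\alpha}$, resp.\ $\gamma>\frac{(2\alpha-1)\zeta-(\alpha-1)}{\alpha}$, are calibrated to exactly these two regimes. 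Once you add a dyadic large-$L$ argument of this type, the rest of your proof (confinement estimates, Varadhan-type passage to the variational problem, and the concentration of $M_N^+-M_N^-$ around $r_*$ by strict suboptimality away from the maximizer) matches the paper's and is sound.
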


\begin{theorem}[\textbf{Region 6}]\label{C105}
Assume that \eqref{def:betah} holds  with $\hat\gb>0$, $\hat h >0$ and
\[
\begin{cases} \gamma>\zeta\ \ \text{and}\ \ \zeta<-1,
& \quad \text{if}\ \ \alpha \in (1, 2], \\
\gamma>\zeta- \frac{\alpha-1}{\alpha}\ \ \text{and}\ \ \zeta<-1, 
& \quad \text{if}\ \ \alpha \in (0,1).
\end{cases}
\]
Then we have the following convergences 
\begin{equation}\label{E122}
\lim_{N\to+\infty} \bP_{N,\gb_N,h_N}^{\go} ( |\cR_n| =2 ) =  1,
\quad   \lim_{N\to+\infty} N^{\zeta} \log Z_{N,\beta_{N},h_{N}}^{\omega}= -2\hat{h} 
\qquad \text{$\bbP$-a.s.}
\end{equation}
\end{theorem}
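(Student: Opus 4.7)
The plan is to show that the partition function is asymptotically captured by the contribution of the two trajectories of minimal range $|\mathcal{R}_N|=2$ (the walks alternating within $\{0,1\}$ or within $\{-1,0\}$). Setting $Z_N^{(k)}:=\bE\bigl[e^{\beta_N\sum_{x\in\mathcal{R}_N}\omega_x-h_N|\mathcal{R}_N|}\ind_{\{|\mathcal{R}_N|=k\}}\bigr]$, both assertions of~\eqref{E122} will follow from proving $Z_N^{(\geq 3)}:=\sum_{k\geq 3}Z_N^{(k)}=o_{\bbP}(Z_N^{(2)})$.

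A direct computation gives
\[
Z_N^{(2)}=2^{-N}e^{-2h_N}\bigl(e^{\beta_N(\omega_0+\omega_1)}+e^{\beta_N(\omega_{-1}+\omega_0)}\bigr).
\]
Using $Z_N\geq Z_N^{(2)}$ and multiplying the log by $N^\zeta$, the terms $-N^{\zeta+1}\log 2$ and $O(\beta_N N^\zeta)=O(N^{\zeta-\gamma})$ both vanish as $N\to\infty$ since $\zeta+1<0$ and $\gamma>\zeta$, leaving $N^\zeta\log Z_N\geq -2\hat h+o(1)$ almost surely.

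For the upper bound I would control each $Z_N^{(k)}$ for $k\geq 3$ using three ingredients: (i)~the survival estimate $\bP(\mathcal{R}_N\subseteq[L,R])\leq C\cos(\pi/(k+1))^N\leq C$ for an interval of width $k$, coming from the principal eigenvalue of the discrete Laplacian on $[L,R]$; (ii)~the uniform-in-$k$ disorder bound
\[
\max_{L\in[-k+1,0]}\Bigl|\sum_{x=L}^{L+k-1}\omega_x\Bigr|\leq 2\max_{|x|\leq k}|S_x^{\omega}|\leq f(k),\qquad f(k)=O\bigl(k^{1/\alpha}\,\mathrm{polylog}(k)\bigr),
\]
where $S_x^{\omega}:=\sum_{y=1}^x\omega_y$ (symmetric for $x<0$), holding $\bbP$-almost surely via the classical LIL when $\alpha=2$ and via a dyadic Etemadi--L\'evy maximal estimate combined with Borel--Cantelli when $\alpha<2$; and (iii)~the range penalty $e^{-h_N k}$. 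The Region~6 hypotheses on $\gamma$ are exactly what ensures $\beta_N f(k)=o(h_N(k-2))$ uniformly in $k\geq 3$: for $\alpha>1$ the ratio $f(k)/(k-2)$ is maximized at $k=3$ (requiring only $\beta_N/h_N\to 0$, i.e.\ $\gamma>\zeta$), whereas for $\alpha<1$ it is maximized at $k\asymp N$ (requiring $\gamma>\zeta+(1-\alpha)/\alpha$). Combining these yields $Z_N^{(k)}\leq Ck\,e^{-h_N(k/2+1)}$, and summing the geometric-like series over $k\geq 3$ gives
\[
\frac{Z_N^{(\geq 3)}}{Z_N^{(2)}}\leq C\cdot 2^N\cdot e^{-h_N/2+O(\beta_N)}\xrightarrow[N\to\infty]{\bbP}0,
\]
since $h_N/N\to+\infty$ thanks to $\zeta<-1$. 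The two convergences in~\eqref{E122} follow at once from $\bP_{N,\beta_N,h_N}^{\omega}(|\mathcal{R}_N|=2)=(1+Z_N^{(\geq 3)}/Z_N^{(2)})^{-1}$ and $N^\zeta\log Z_N=N^\zeta\log Z_N^{(2)}+N^\zeta\log(1+Z_N^{(\geq 3)}/Z_N^{(2)})$.

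The main obstacle is item~(ii): in the regime $\alpha<1$ where $\omega_0$ has infinite mean, a direct maximal inequality only gives the optimal rate on a dyadic subsequence, and a careful chaining argument is required to obtain the uniform-in-$k$ estimate with the sharp power $k^{1/\alpha}$ (up to logarithmic corrections); this sharpness is precisely what yields the tight threshold $\gamma>\zeta+(1-\alpha)/\alpha$ in the statement.
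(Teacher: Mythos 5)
Your proof is correct and follows the same skeleton as the paper's (isolate the two $|\cR_N|=2$ trajectories, compute $Z_N^{(2)}$ exactly, and show the remainder is negligible by weighing the disorder gain against the range penalty), but the way you control the disorder is genuinely different. The paper splits $Z_N^\go(|\cR_N|\ge 3)$ into $\{M_N^*\le A\}$ and a dyadic decomposition of $\{M_N^*>A\}$, and for each fixed $N$ applies the tail bound $\bbP(\gO_\ell^*>\T)\le c\,\ell\,\T^{-\ga}$ of Lemma~\ref{lem1} with a union bound over scales; this yields the stated convergence in probability directly and never needs an almost-sure envelope. You instead decompose over the exact range size $k$ and invoke an almost-sure uniform bound $\gO_k^*\le f(k)=O(k^{1/\ga}\,\mathrm{polylog}(k))$, which makes the mechanism behind the two hypotheses transparent: the worst ratio $\gb_N f(k)/(h_N(k-2))$ sits at $k=3$ when $\ga>1$ (giving $\gamma>\zeta$) and at $k\asymp N$ when $\ga<1$ (giving $\gamma>\zeta+\tfrac{1-\ga}{\ga}$), and your argument would in fact deliver an almost-sure statement, which is stronger than what is claimed. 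One small simplification on the step you flag as the main obstacle: no chaining is needed to pass from the dyadic subsequence to all $k$, since $k\mapsto\gO_k^*$ is monotone, so $\gO_{2^{j}}^*\le (2^j)^{1/\ga}(\log 2^j)^{2/\ga}$ eventually (Borel--Cantelli with the Etemadi-type bound) already gives $\gO_k^*\le C k^{1/\ga}(\log k)^{2/\ga}$ for all large $k$, and the finitely many small $k$ are handled by $\gb_N/h_N=N^{\zeta-\gamma}\to 0$.
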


\medskip

Let us conclude this section with a result that complements 
Theorems~\ref{C102}-\ref{C103}
and Theorem~\ref{C106} in the case $\xi>\frac12$.
It shows that under $\bP_{N,\gb_N,h_N}^{\hat\go}$ trajectories travel ballistically to 
the closest point between $-\cU N^{\xi}$ and $\cV N^{\xi}$
and then to the other one.
Let us introduce some notation to be able to state the result.
For $u,v\geq 0$ with $u\neq v$,  let $\sigma_{u,v}= -1$ if $u< v$ and $\sigma_{u,v} =+1$ otherwise, let $c_{u,v} = u\wedge v + u+v$,
and define the function
\begin{equation}
\label{eq:ballisticfunction}
b_{u,v} (t) = 
\begin{cases}
\sigma_{u,v} \,c_{u,v}\, t &\quad \text{ for } 0\leq t \leq \frac{u\wedge v}{c_{u,v}} \,,\\
-\sigma_{u,v} \,c_{u,v} \,  t + 2 \sigma_{u,v} \,(u\wedge v) & \quad \text{ for }   \frac{u\wedge v}{c_{u,v}}\leq t \leq 1 \,,
\end{cases}
\end{equation}
that goes
with constant speed from $0$ to the closest point between $-u$ and $v$ and then to the other one.
Now, for $\gep>0$, let us define the event
\begin{equation}
\label{eq:ball}
\cB_N^{\gep}(u,v) := \Big\{ \sup_{t\in [0,1]} \Big| \frac{1}{N^{\xi}} S_{\lfloor tN \rfloor}  -b_{u,v}(t) \Big| \leq \gep \Big\} \,.
\end{equation}
We then have the following result.

\begin{proposition}
\label{prop:ballistic}
Assume that, for some  $\xi\in(\frac12,1]$, for any $\gd>0$ we have
\begin{equation}
\label{eq:convprobab}
\lim_{N\to\infty}  \bP_{N,\gb_N,h_N}^{\hat\go}  \Big( \Big| \frac{1}{N^{\xi}} (M_N^-, M_N^+ )   -(-\cU, \cV)\Big|>\gd \Big) 
=  0   \,,  \quad \text{in $\hat\bbP$-probability (resp.\ $\hat \bbP$-a.s.)} \,,
\end{equation}
with $\cU,\cV \geq 0$ two random variables such that $\cU\neq \cV$ a.s.
Then, for any $\gep>0$ we have
\begin{equation*}
\lim_{N\to+\infty} \bP_{N,\gb_N,h_N}^{\hat\go} \big( \cB_N^{\gep} (\cU,\cV) \big) =1 \,,  \quad \text{in $\hat\bbP$-probability  (resp.\ $\hat \bbP$-a.s.)} \,.
\end{equation*}
\end{proposition}

\begin{remark}
An analogous result should also hold in the case $\xi \in (0,\frac12)$. Assume that~\eqref{eq:convprobab} holds
with $\xi\in (0,\frac12)$. Then we expect that
$\frac{N^{-\xi}S_N +\cU}{\cU+\cV}$ converges in distribution (under $\bP_{N,\gb_N,h_N}^{\hat\go}$)
towards a random variable $\mathcal X$ with density $\frac{\pi}{2} \sin(\pi x) \ind_{[0,1]}(x)$.
This result is easy to obtain for a random walk 
conditioned to remain inside an interval $[-a N^{\xi} ,bN^{\xi}]$,
but becomes trickier when the range is conditioned
to be exactly $[-a N^{\xi} ,bN^{\xi}]$.
We are not aware of any such result for random 
walks conditioned on their range, but let us mention~\cite{Bouchot22} where a closely related question is considered.
We therefore chose not to develop this in the present paper to avoid lengthening it.
\end{remark}

\subsection{Some comments on the results (case $\hat h>0$)}
\label{sec:comments}
Let us now make some observations on our results.

\smallskip
{\bf Comment 1.}
Our results describe a transition from \emph{folded} trajectories ($\xi<1/2$) to \emph{stretched} trajectories ($\xi>1/2$),
which is induced by the presence of disorder.
Let us illustrate this in the case $\ga\in (1,2]$ for simplicity;
we refer to the phase diagram of Figure~\ref{F1}.
If $\gb_N  = \hat \gb >0$ and $h_N = \hat h>0$,
that is if $\gamma = \zeta =0$, we find that trajectories are folded, with range size exponent $\xi = 1/3$.
Now, if we keep $h_N =\hat h >0$  fixed (\textit{i.e.}\ $\zeta=0$)  and increase the strength of disorder, that is if we decrease $\gamma$ (taking $\gamma<0$),
we realize that we have transitions between the following regimes: 
\begin{enumerate}
\item[(i)] if $\gamma> \frac{1-\ga}{3\ga}$, the random walk is folded with range size exponent $\xi=\frac13$ (disorder is not strong enough);
\item[(ii)] if $ \frac{1-\ga}{3\ga}> \gamma >\frac{1-\ga}{2\ga}$, then the random walk
is still folded, with range size exponent
$ \frac13 <\xi=\frac{\gamma \ga}{1-\ga}< \frac12$ (disorder makes the random walk less folded);
\item[(iii)]  if $ \frac{1-\ga}{2\ga}> \gamma >\frac{1-\ga}{\ga}$, then
 the random walk is stretched, with range size exponent
$\frac12<\xi=\frac{\gamma \ga}{1-\ga}<1$
(disorder is strong enough to stretch the random walk);
\item[(iv)] if $\gamma < \frac{1-\ga}{\ga}$, then the random walk is completely unfolded and has range size exponent $\xi=1$.
\end{enumerate}

Analogously, if  we keep $\gb_N =\hat \gb >0$ fixed (\textit{i.e.}\ $\gamma=0$) and decrease the penalty for the range, that is if we increase $\zeta$ (taking $\zeta>0$),
we have transitions between the following regimes:
\begin{enumerate}
\item[(i)] if $ 0<\zeta < \frac{\ga-1}{2\ga+1} $, then the random walk
is still folded with range size exponent
$ \frac12<\xi=  \frac{1+\zeta}{3}< \frac{\ga}{2\ga+1} <\frac12$ (disorder plays no role);
\item[(ii)] if $\frac{\ga-1}{2\ga+1}<\zeta < \frac{\ga-1}{2\ga}$, then the random walk
is still folded with range size exponent
$\frac{\ga}{2\ga+1}<\xi= \frac{\zeta\ga}{\ga-1}< \frac12$ (disorder plays a role);
\item[(iii)]   if $ \frac{\ga-1}{2\ga}< \zeta < \frac{\ga-1}{2\ga-1}$, then
 the random walk is stretched, with range size exponent
$1/2<\xi= \frac{\gamma \ga}{1-\ga}< \frac{\ga}{2\ga-1}<1$ (disorder stretches the random walk);
\item[(iv)] if $\zeta >\frac{\ga-1}{2\ga-1}$, then the random walk is stretched and has range size exponent $\frac23 \leq \xi = \frac{\ga}{2\ga-1} <1$ (and the penalty for the range is not felt).
\end{enumerate}

\smallskip
{\bf Comment 2.}
Let us now discuss the limiting distributions for the log-partition function in regions $R_2$, $R_3$, $R_4$. For simplicity,
we will restrict ourselves to the case where $u=0$ in the variational
problems~\eqref{E119}-\eqref{E120}-\eqref{E123} 
(which corresponds to considering the case of a random walk constrained to stay non-negative): the variational problems become, respectively
\begin{equation*}
\tilde \cW_{2} := \sup_{v\geq 0} \left\{ \hat \gb X_v - \tfrac12 v^2 \right\}  , \quad  \tilde \cW_{3}:= \hat \gb \sup_{v\in [0,1]} \left\{  X_v \right\} ,  \quad 
 \tilde \cW_{4} := \sup_{v\geq 0} \left\{  \hat \gb X_v -  \hat h v \right\} \,,
\end{equation*}
with $(X_v)_{v\geq 0} = (X_{v}^{(1)})_{v\geq 0}$.

{\bf a)} The variational problem $\tilde \cW_{3}$ is clearly always finite.
In the case $\ga=2$, $(X_t)_{t\geq 0}$ is a Brownian motion
and it is standard to get that $\tilde \cW_{3}$ has the distribution of $\hat \gb |Z|$, with $Z\sim \mathcal{N}(0,1)$.
In the case $\ga\in (0,2)$, $(X_t)_{t\geq 0}$ is a stable L\'evy process and we get that $\tilde \cW_{3}$ is a postitive $\alpha$-stable random variable (see~\cite[Ch.~VIII]{B96}, and also~\cite{Kuz11}).

{\bf b)} The variational problem $\tilde \cW_{4}$ is finite only when $\ga>1$: when $\ga\in (0,1)$, then $X_v$ grows typically as $v^{1/\ga} \gg v$ as $v\to\infty$ and we therefore have $\cW_{4} =+\infty$. This explains in particular why there is no energy-range balance possible if $\ga\in (0,1)$ and why region~$R_4$ no longer exists in that case.
If $\ga=2$,  $(X_t)_{t\geq 0}$ is a Brownian motion
and it is standard to get that $\tilde \cW_{4}$ is an exponential random variable (here with parameter $2\hat h /\gb^2$).
If $\ga \in (1,2)$, $(X_t)_{t\geq 0}$ is a stable L\'evy process
and $(\hat \gb X_t - \hat h t)_{t\ge 0}$ is also a L\'evy process:
 the distribution of its supremum $\tilde \cW_{4}$ has been studied extensively, going back to \cite{BD57}, but the exact distribution does not appear to be known (we refer to the recent
 papers~\cite{Cha13,KMR13}).

{\bf c)} The variational problem $\tilde \cW_{2}$ is finite only when $\ga>\frac12$: when $\ga\in (0,\frac12)$, then $X_v$ grows typically as $v^{1/\ga} \gg v^2$ as $v\to\infty$ and we therefore have $\cW_{2} =+\infty$. This explains in particular why there is no energy-entropy balance possible if $\ga\in (0,\frac12)$,
and why region $R_2$ no longer exists in that case.
In the case $\ga=2$, that is when $(X_t)_{t\ge 0}$ is a standard Brownian motion, then $\tilde \cW_{4}$  has appeared in various contexts and its density is known (its Fourier transform is expressed in terms of Airy function, see for instance \cite{DS85,Gro89}).
In the case $\ga\in (\tfrac12,2)$, 
exact asymptotics on the tail of the distribution of $\tilde \cW_{4}$
have been derived in~\cite{PS19};
we are not aware 
whether the distribution of $\tilde \cW_{4}$ has been studied in more detail.

\smallskip
{\bf  Comment 3.}
To keep the paper lighter,
we have chosen not to treat the cases
of the boundaries between different regions of the phase diagrams.
These boundary regions do not really hide anything deep:
features of both regions should appear in the limit, and ``disorder'', ``range'' and ``entropy'' may all compete at the same
(exponential) scale.
Let us state the limiting variational problems that one should find in some the most interesting boundary cases, in the case $\alpha\in(1,2]$  for simplicity (we refer to the phase diagram of Figure~\ref{F1}):

\smallskip
\textbullet\ Line between regions $R_2$ and $R_4$: $\gamma = \frac{(2\alpha-1)\zeta-(\alpha-1)}{\alpha}$ and $\zeta\in(0,\frac{1}{2})$. Then one should have $\xi=\frac{\alpha(1-\gamma)}{2\alpha-1}$ and
\[ 
 \lim_{N\to+\infty} \frac{1}{N^{2\xi-1}}\log Z_{N,\beta_{N},h_{N}}^{\hat\omega}
  =\sup_{u,v\geq 0} \Big\{  \hat \gb  (X_v^{(1)}+X_u^{(2)}) - \hat h (u+v) -\tfrac{1}{2} ( u\wedge v +u+v)^2 \Big\} 
\]
{in $\hat\bbP$-probability.}

\textbullet\ Line between regions $R_4$ and $R_5$: $\gamma = \frac{(2\alpha+1)\zeta-(\alpha-1)}{3\alpha}$ and $\zeta\in(-1,\frac{1}{2})$. Then one should have $\xi=\frac{1+\zeta}{3}$ and
\[ 
\lim_{N\to+\infty} \frac{1}{N^{1-2\xi}}\log Z_{N,\beta_{N},h_{N}}^{\hat\omega}
= \sup_{u,v\geq 0} \Big\{  \hat \gb  (X_v^{(1)}+ X_u^{(2)})  - \hat h (u+v) - \frac{\pi}{2(u+v)^2} \Big\} 
\]
{in $\hat\bbP$-probability,}
where the last term inside the supremum comes from the entropic cost of ``folding'' the random walk in the interval $[uN^\xi, v N^{\xi}]$ (see Lemma~\ref{lem:subdiffusive}).

\smallskip
\textbullet\ Line between regions $R_2$ and $R_3$: $\gamma = -\frac{\ga-1}{\ga}$ and $\zeta>0$. Then one should have $\xi=1$ and
\[
\lim_{N\to+\infty}
\frac{1}{N^{2\xi-1}}\log Z_{N,\beta_{N},h_{N}}^{\hat\omega}
= \sup_{u,v\geq 0} \Big\{  \hat \gb  (X_v^{(1)}+ X_u^{(2)}) -  \kappa(u\wedge v+u+v)   \Big\} 
\]
{in $\hat\bbP$-probability,}
where 
\begin{equation}\label{def:k}
\kappa(t):= \begin{cases}\frac12 (1+t) \log(1+t) + \frac12 (1-t) \log(1-t), \quad \text{for}\quad t\in [0,1]\\
+\infty \quad \text{for}\quad t>1,\end{cases}
\end{equation}  
 is the rate function for the large deviations of the simple random walk, see Lemma~\ref{lem:superdiffusive2}.

\smallskip
\textbullet\ Line between regions $R_3$ and $R_4$: $\gamma =\zeta -\frac{\ga-1}{\ga}$ and $\zeta<0$. Then one should have $\xi=1$ and
\[
\lim_{N\to+\infty} \frac{1}{N^{2\xi-1}}\log Z_{N,\beta_{N},h_{N}}^{\hat\omega}
=  \sup_{u,v\geq 0} \Big\{  \hat \gb  (X_v^{(1)}+ X_u^{(2)}) - \hat h (u+v) - \kappa(u\wedge v+u+v)   \Big\}
\]
{in $\hat\bbP$-probability.}

\smallskip
{\bf Comment 4.}
In region $R_5$, the disorder term does not appear in the variational formula. 
In the case $\gb=0$ and $h>0$ (\textit{i.e.}\ $\gamma=\infty$, $\zeta=0$), corresponding to the random walk penalized by its range in a homogeneous way, the behavior of the random walk is well understood:
it is confined in a segment of length $(\pi^{\frac23}\hat h^{-\frac13}) N^{1/3}$ with a random center,
see \cite{Schmock90}
for the continuum limit of the process.
In our model, we have shown that  trajectories are still confined in a segment of length $(\pi^{\frac23}\hat h^{-\frac13}) N^{1/3}$. However, disorder should appear in the fluctuations of the log-partition function
and in particular we believe that, depending on the strength $\gb_N$ of the disorder interaction,
the center of this segment should be determined by the environment so as to maximize the
amount of potentials in that segment; in particular, it should not be random anymore (under $\bP_{N,\gb_N,h_N}^{\go}$, for typical realizations of~$\go$).
This picture should hold in region $R_5$
as long as the effect of disorder is sufficiently strong.
More precisely,  using the terminology of Section~\ref{sec:heuristics},
the ``disorder'' term is $\hat \gb N^{\frac{\xi}{\ga} -\gamma}$,
with $\xi=\frac13 (1+\zeta)$: its effect does not vanish as long as $\gamma < \xi/\ga$, that is as long as $\gamma < \frac{1}{3\ga} (1+\zeta)$.
In other words, there should be another phase transition inside region $R_5$: the random walk is confined in a segment 
of length $(\pi^{\frac23}\hat h^{-\frac13}) N^{\xi}$ with $\xi = \frac13 (1+\zeta)$, but  under $\bP_{N,\gb_N,h_N}^{\go}$ the location of this segment should be
non-random (\textit{i.e.}~determined by  the realization of $\go$) when $\gamma < \frac{1}{3\ga} (1+\zeta)$,
and random when $\gamma > \frac{1}{3\ga} (1+\zeta)$ (which includes the case $\gb=0$).
We leave this as an open problem.

\section{Results in the case $\hat h <0$}
\label{rem:h<0}

\subsection{The phase diagram}

In the case $\hat h <0$, the same type of
``energy'' vs.\ ``range'' vs.\ ``entropy'' heuristics as in
Section~\ref{sec:heuristics} can be
carried out. The main difference is that the ``range'' term is now a reward rather than a penalty and thus plays in the same direction as the ``disorder'' term and encourages stretching:
the range size exponent will always verify $\xi\ge 1/2$.
Recall that for a polymer with typical range size $N^{\xi}$, the ``range'' term is of order $N^{\xi -\zeta}$,
the ``disorder'' term is of order $N^{\xi/\ga-\gamma}$
and the entropy term is $N^{2\xi-1}$ (since $\xi\geq 1/2$), as in \eqref{E107-1} and \eqref{E107}.
In a similar fashion than in Section~\ref{sec:heuristics},
we find that  two cases need to be considered.

\textit{Case I (``disorder"$\,\gg\,$``range").} 
As mentioned in Remark~\ref{rem:h}, regions $R_1,R_2,R_3$ are unchanged when $h<0$: we refer to~\eqref{E108}-\eqref{E109}-\eqref{E110} for the determination of $\xi$ in these three regions.

\textit{Case II (``disorder"$\,\ll\,$``range").} 
The balance between range and entropy is achieved if
 $\xi-\zeta =2\xi-1$ (with $\xi \in [\frac12,1]$), which gives
$\xi=1-\zeta$  when $\gamma>\frac{(2\ga-1)\zeta-(\ga-1)}{\ga}$.
Also, we have
$\xi=1$ when $\zeta\leq 0$ and $\gamma>\zeta - \frac{\ga-1}{\ga}$,
and we have
$\xi=1/2$ when $\zeta\geq 1/2$ and $\gamma>\frac{1}{2\ga}$.

\smallskip
To summarize, we can identify several regimes, according to the values of $\gamma,\zeta$:
there are five regimes when  $\ga \in (\frac12,2]$,
see Figure~\ref{F4} below;
there are four regimes when  $\ga \in (0,\frac12)$,
see Figure~\ref{F5} below.

\begin{figure}[ht]
\centering
\begin{tikzpicture}[scale=3]
\draw[->](-0.5,0)--(1.5,0) node[anchor=north] {$\zeta$};
\draw[->](0,-0.55)--(0,1);
\draw (-0.05,1) node[anchor=north] {$\gamma$};
\draw[red,thick](1,2/3)--(1.5,2/3);
\draw[green,thick](1,2/3)--(1,1);
\draw (1,2/3) node[anchor=east] {\small $(\frac{1}{2},\frac{1}{2\ga})$};
\draw (1.25,1) node[anchor=north] {$R_{1}$};
\draw[blue,thick](0,-0.2)--(1,2/3);
\draw (1.65,0.4) node[anchor=east] {\small $\gamma=\frac{(2\ga-1)\zeta-(\ga-1)}{\ga}$};
\draw (0.9,0.25) node[anchor=north] {$R_{2}$};
\draw[yellow,thick](0,-0.2)--(1.5,-0.2);
\draw (0.2,-0.2) node[anchor=north] {\small $(0,\frac{1-\ga}{\ga})$};
\draw (0.7,-0.3) node[anchor=north] {$R_{3}$};
\draw[purple,thick](0,-0.2)--(0,1);
\draw (0.3,0.6) node[anchor=north] {$\tilde R_{4}$};
\draw (-0.3,0.4) node[anchor=north] {$\tilde R_{5}$};
\draw[orange,thick](-0.5,-0.5)--(0,-0.2);
\draw (-0.5,-0.2) node[anchor=north] {\small $\gamma=\zeta-\frac{\alpha-1}{\alpha}$};
\draw (-1.6,0.7) node[anchor=west] {\small $R_{1}:\xi=\frac{1}{2}$};
\draw (-1.6,0.55) node[anchor=west] {\small $R_{2}:\xi=\frac{\ga}{2\ga-1}(1-\gamma)$};
\draw (-1.6,0.4) node[anchor=west] {\small $R_{3}:\xi=1$};
\draw (-1.6,0.25) node[anchor=west] {\small $\tilde R_{4}:\xi=1-\zeta$};
\draw (-1.6,0.1) node[anchor=west] {\small $\tilde R_{5}:\xi=1$};
\end{tikzpicture}
\caption{Phase diagram for $\hat h<0$, in the case $\ga\in(1/2,2]$.
}\label{F4}
\end{figure}

\begin{figure}[ht]
\centering
\begin{tikzpicture}[scale=3]
\draw[->](-1,0)--(1.5,0) node[anchor=north] {$\zeta$};
\draw[->](0,-0.2)--(0,1);
\draw (-0.05,1) node[anchor=north] {$\gamma$};
\draw[red,thick](1,2/3)--(1.5,2/3);
\draw[green,thick](1,2/3)--(1,1);
\draw (1.1,0.65) node[anchor=north] {\small $(\frac{1}{2},\frac{1-\ga}{\ga})$};
\draw (1.2,0.95) node[anchor=north] {$R_{1}$};
\draw[yellow,thick](0,2/3)--(1,2/3);
\draw (0.2,2/3) node[anchor=north] {\small $(0,\frac{1-\ga}{\ga})$};
\draw (0.7,0.3) node[anchor=north] {$R_{3}$};
\draw[purple,thick](0,2/3)--(0,1);
\draw (0.5,0.95) node[anchor=north] {$\tilde R_{4}$};
\draw (-0.5,0.6) node[anchor=north] {$\tilde R_{5}$};
\draw[orange,thick](-1,-0.2)--(0,2/3);
\draw (-0.45,0.25) node[anchor=north] {\small $\gamma=\zeta-\frac{\alpha-1}{\alpha}$};
\draw (-1.7,0.7) node[anchor=west] {\small $R_{1}:\xi=\frac{1}{2}$};
\draw (-1.7,0.55)node[anchor=west] {\small $R_{3}:\xi=1$};
\draw (-1.7,0.4) node[anchor=west] {\small $\tilde R_{4}:\xi=1-\zeta$};
\draw (-1.7,.25)node[anchor=west] {\small $\tilde R_{5}:\xi=1$};
\end{tikzpicture}
\caption{Phase diagram for $\hat h<0$, in the case
 $\ga\in(0,1/2)$.
}\label{F5}
\end{figure}

\subsection{Statement of the results}

We only state the results in regions~$\tilde R_4$ and $\tilde R_5$,
since the regions $R_1,R_2$ and $R_3$ are treated in Section~\ref{sec:results}, see Theorems~\ref{C101}, \ref{C102} and~\ref{C103} (respectively). 

\begin{theorem}[\bf Region~$\tilde R_4$]\label{thm:R4tilde}
Assume that \eqref{def:betah} holds  with $\hat \gb>0$, $\hat h <0$ and 
\[
\gamma > \frac{(2\ga-1) \zeta -(\ga-1)}{\ga} \vee \frac{1-\ga}{\ga} ,\qquad \zeta\in \big(0,\tfrac{1}{2}\big).
\]
Then $(S_n)_{0\leq n\leq N}$ has range size of order $N^{\xi}$ under $\bP_{N,\gb_N,h_N}^{\go}$ with  $\xi = 1-\zeta  \in (\frac12,1)$, and we have the following convergence
\begin{equation}\label{Etilde123}
\lim_{N\to+\infty} \frac{1}{N^{\xi-\zeta}} \log Z_{N,\beta_{N},h_{N}}^{\omega}
=  \frac{1}{2}  \hat h^2 = \sup_{u,v\geq 0} \Big\{ | \hat h| (v-u) - \tfrac{1}{2} ( u\wedge v +u+v)^2\Big\} 
\qquad \text{$ \bbP$-a.s.}
\end{equation}
Additionally,
let us consider, for $\gep>0$, the two events
\begin{equation*}
\label{eq:eventB+B-}
\cB_N^{+,\gep} :=\Big \{  \sup_{t\in [0,1]} \big| N^{-\xi} S_{\lfloor tN\rfloor} + \hat h\, t \big| \leq \gep \Big\} \,,\qquad
\cB_N^{-,\gep} :=\Big \{  \sup_{t\in [0,1]} \big| N^{-\xi} S_{\lfloor tN\rfloor} -\hat h\, t \big| \leq \gep \Big\} \,,
\end{equation*}
which corresponds to $(S_n)_{0\leq n \leq N}$ travelling with roughly constant speed to either $-\hat h N^{\xi}$ or $\hat h N^{\xi}$.
Then for any $\gep>0$, we have
\begin{equation*}
\lim_{N\to+\infty} \Big(  \bP_{N,\gb_N,h_N}^{\go} \big( \cB_N^{+,\gep} \big)  +  \bP_{N,\gb_N,h_N}^{\go} \big( \cB_N^{-,\gep} \big)  \Big) =1 
\qquad\text{$\bbP$-a.s.},
\end{equation*}
and $\hat \bbP$-a.s.
\begin{equation}
\label{eq:ballistic1}
\lim_{\gep\downarrow 0} \lim_{N\to+\infty} \bP_{N,\gb_N,h_N}^{\hat \go} \big( \cB_N^{+,\gep} \big) 
 =
 \begin{cases}
\ \ind_{\{X_{|\hat h|}^{(1)} > X_{|\hat h|}^{(2)}\}}  & \text{ if } \gamma <\frac{1-\zeta}{\ga} \,,\\[7pt]
  \frac{\exp\big( \hat \gb X_{|\hat h|}^{(1)} \big) }{ \exp\big( \hat \gb X_{|\hat h|}^{(1)} \big) + \exp\big( \hat \gb X_{|\hat h|}^{(1)} \big)  }  & \text{ if } \gamma =\frac{1-\zeta}{\ga} \,, \\[7pt]
\   \frac12 & \text{ if } \gamma >\frac{1-\zeta}{\ga} \,.
  \end{cases}
\end{equation}
\end{theorem}

Before we state the result in region~$\tilde R_5$ (which is somehow degenerate),
let us state a result in the case $\zeta=0$, that is at the boundary of regions~$\tilde R_4$ and $\tilde R_5$.
  
\begin{theorem}[\bf Boundary $\tilde R_4$---$\tilde R_5$]
\label{R4tilde}
Assume that \eqref{def:betah} holds with $\hat \gb >0$, $\hat h <0$ and with
$\zeta=0$, $\gamma>  -\frac{\ga-1}{\ga}$.
Then we have the following convergence
\begin{equation}
\label{Etilde124}
\lim_{N\to+\infty}
\frac{1}{N} \log Z_{N,\gb_N,h_N}^{\go}
=   \log \big( \sinh |\hat h| \big)   =  \sup_{ u,v\geq  0 } \left\{ |\hat h| (u+v) - \kappa( u\wedge v +u+v)  \right\} 
\qquad \text{$\bbP$-a.s.},
\end{equation}
with $\kappa(\cdot)$ defined in~\eqref{def:k}.
Additionally,
for $\gep>0$, let us consider the two events
\begin{equation*}
\cB_N^{+,\gep} :=\Big \{  \sup_{t\in [0,1]} \big| N^{-1} S_{\lfloor tN\rfloor} -\tanh(|\hat h|) t \big| \leq \gep \Big\} \,,\quad
\cB_N^{-,\gep} :=\Big \{  \sup_{t\in [0,1]} \big| N^{-1} S_{\lfloor tN\rfloor} + \tanh(|\hat h|) t \big| \leq \gep \Big\} \,,
\end{equation*}
which corresponds to $(S_n)_{0\leq n \leq N}$ travelling with roughly constant speed to either $\tanh(|\hat h|) N$ or $-\tanh(|\hat h|)N$.
Then for any $\gep>0$,  we have
\begin{equation*}
\lim_{N\to+\infty} \Big(  \bP_{N,\gb_N,h_N}^{\go} \big( \cB_N^{+,\gep} \big)  +  \bP_{N,\gb_N,h_N}^{\go} \big( \cB_N^{-,\gep} \big)  \Big) =1
\qquad\text{$\bbP$-a.s.},
\end{equation*}
and $\hat \bbP$-a.s.
\begin{equation}
\label{eq:ballistic2}
\lim_{\gep\downarrow 0} \lim_{N\to+\infty} \bP_{N,\gb_N,h_N}^{\hat \go} \big( \cB_N^{+,\gep} \big) 
 =
 \begin{cases}
\   \ind_{\{X_{\tanh|\hat h|}^{(1)} > X_{\tanh|\hat h|}^{(2)}\}}  & \text{ if } \gamma <\frac{1}{\ga} \,,\\[7pt]
\frac{\exp\big( \hat \gb X_{\tanh|\hat h|}^{(1)} \big) }{ \exp\big( \hat \gb X_{\tanh|\hat h|}^{(1)} \big) + \exp\big( \hat \gb X_{\tanh|\hat h|}^{(2)} \big)  }  & \text{ if } \gamma =\frac{1}{\ga} \,, \\[7pt]
\   \frac12 & \text{ if } \gamma >\frac{1}{\ga} \,.
  \end{cases}
\end{equation}
\end{theorem}

\noindent
To conclude, we state the result in region $\tilde R_5$.

\begin{theorem}[\bf Region~$\tilde R_5$]
\label{R5tilde}
Assume that \eqref{def:betah} holds with $\hat\gb >0$, $\hat h <0$ and
$\zeta<0$, $\gamma> \zeta -\frac{\ga-1}{\ga}$.
Then
\[
\lim_{N\to+\infty} N^{\zeta-1} \log Z_{N,\gb_N,h_N}^{\go} =|\hat h| \qquad \text{$ \bbP$-a.s.}
\]
Additionally,
for $\gep>0$, let us consider the two events
\[
\cB_N^{+,\gep} :=\Big \{  \sup_{t\in [0,1]} \big| N^{-1} S_{\lfloor tN\rfloor} -t \big| \leq \gep \Big\} \,,\quad
\cB_N^{-,\gep} :=\Big \{  \sup_{t\in [0,1]} \big| N^{-1} S_{\lfloor tN\rfloor} +t \big| \leq \gep \Big\} \,,
\]
which corresponds to $(S_n)_{0\leq n \leq N}$ travelling with roughly constant speed to either $N$ or~$-N$.
Then, for any $\gep>0$,   we have
\begin{equation*}
\lim_{N\to+\infty} \Big(  \bP_{N,\gb_N,h_N}^{\go} \big( \cB_N^{+,\gep} \big)  +  \bP_{N,\gb_N,h_N}^{\go} \big( \cB_N^{+,\gep} \big)  \Big) =1 
\qquad\text{$\bbP$-a.s.},
\end{equation*}
and $\hat \bbP$-a.s.
\begin{equation}
\label{eq:ballistic3}
\lim\limits_{\epsilon\downarrow0}\lim_{N\to+\infty} \bP_{N,\gb_N,h_N}^{\hat\go} ( \cB_N^{+,\gep} ) 
 =
 \begin{cases}
\   \ind_{\{X_{1}^{(1)} > X_{1}^{(2)}\}}  & \text{ if } \gamma <\frac{1}{\ga} \,,\\[7pt]
  \frac{\exp\big( \hat \gb X_{1}^{(1)} \big) }{ \exp\big( \hat \gb X_{1}^{(1)} \big) + \exp\big( \hat \gb X_{1}^{(2)} \big)  }  & \text{ if } \gamma =\frac{1}{\ga} \,, \\[7pt]
\  \frac12 & \text{ if } \gamma >\frac{1}{\ga} \,.
  \end{cases}
\end{equation}

\noindent
If $\ga\in(0, 1)$ or if $\ga\in(1,2]$ and $\gamma>\zeta$, then
we can upgrade the result:
we have
\begin{equation}\label{Rn=n}
\lim_{N\to+\infty}  \Big( \bP_{N,\gb_N,h_N}^{\go} \big( S_N =N \big) +\bP_{N,\gb_N,h_N}^{\go} \big( S_N = -N \big)  \Big)= 1 
\qquad
\text{$\bbP$-a.s.}
\end{equation}
and \eqref{eq:ballistic3} holds with $\{S_N=N\}$ in place of $B_N^{+,\gep}$.
\end{theorem}

\subsection{Further comments on the results  in the case $\hat h<0$}

{\bf Comment 5.}
Notice that in Theorems~\ref{thm:R4tilde}, \ref{R4tilde} and~\ref{R5tilde},
the disorder term disappears in the limiting variational problems
and the displacement of  $S_N$ under $\bP_{N,\gb_N,h_N}^{\hat{\go}}$
is given by a law of large number,  possibly with a random direction. 
Analogously to Comment~4 above, disorder should also appear in the fluctuations of the log-partition function and in the second order term for the displacement of $S_N$.
 For simplicity, let us comment further the case
of the boundary $\tilde R_4$---$\tilde R_5$ (that is Theorem~\ref{R4tilde}), \textit{i.e.}\ consider the case where  $h<0$ is fixed ($\zeta=0$) and 
$\gb_N = \hat \gb N^{-\gamma}$ with $\gamma > \frac{1-\ga}{\ga}$.
In that case, the polymer has a (non-random) velocity $v_h:=\tanh |h|$ either to the positive side or to the negative side:
assume for simplicity that $X_{v_h}^{(1)} > X_{v_h}^{(2)}$, so that $\frac1N S_N$ converges to $v_h$ (and not $-v_h$). 
Randomness should then have the effect of stretching further (or back) the polymer: let us present some heuristic explanation on what one should expect.
If we assume that $M_N^- = -u N^{\rho}$ and  $M_N^+ = v_h N + v N^{\rho}$, for some $\rho\in (\frac12,1)$ and $v\in \bbR,u\geq 0$, then, compared to the case $S_N=v_h  N$:

\begin{enumerate}
\item[(i)] there is an additional range reward $|\hat h| (u+v) N^{\rho}$;

\item[(ii)] there is an additional entropic cost 
 $\kappa'(v_h) (2u+v) N^{\rho} +\frac12 \kappa''(v_h) (2u+v)^2 N^{2\rho-1}$;

\item[(iii)] there is an energy gain of approximately $\hat \gb  (\check X_u^{(1)} +\check X_{v}^{(2)})N^{\frac{\rho}{\ga}-\gamma}$, with $\check X^{(1)},\check X^{(2)}$ the scaling limits of the fields 
$(\go_x)_{x\leq 0}, (\go_{v_h N+ x})_{x\in \bbZ}$ on a scale $N^{\rho}$.
\end{enumerate}

\noindent
In particular, there are some cancellations between
the range reward and the additional entropic cost:
one gets $-\kappa'(v_h) u N^{\rho} - \frac12 \kappa''(v_h) (2u+v)^2 N^{2\rho-1}$.
One should therefore take $u=0$:
if $u>0$ the term $-\kappa'(v_h) u N^{\rho}$ cannot be compensated by $\hat \gb \tilde X_u^{(1)} N^{\frac{\rho}{\ga}-\gamma}$, because $\gamma > \frac{1-\alpha}{\alpha}$.
Therefore, for the entropy-energy balance, one has to compare 
$\frac12 \kappa''(v_h) v^2 N^{2\rho-1}$ with $\hat \gb\check X_{v}^{(2)}N^{\frac{\rho}{\ga}-\gamma}$.

All together, this suggests that in the case $\gamma< \frac{1}{2\ga}$
(which is compatible with $\gamma > \frac{1-\ga}{\ga}$ only for $\ga \in (1,2]$) it is possible to take $\rho := \frac{(1-\gamma)\alpha}{2\alpha-1} \in (\frac12,1)$, so that $N^{2\rho-1}\asymp N^{\frac{\rho}{\ga}-\gamma}$.
Then, under $\bP_{N,\gb_N,h_N}^{\hat{\go}}$,
one should have 
$M_N^- =o(N^\rho)$ and $M_N^+ = v_h N- (1+o(1))\cV N^\rho$,
where~$\cV$ is the maximizer of the variational problem 
\[
\sup_{v\in \bbR} \left\{ \hat \gb  \check X_{v}^{(2)}  - \tfrac12 \kappa''(v_h) v^2 \right\} \,.
\]
On the other hand, when $\gamma >\frac{1}{2\ga}$,
then there is no further stretching of the polymer by the disorder:
under $\bP_{N,\gb_N,h_N}^{\hat{\go}}$ we should have $S_N = v_h N + O(\sqrt{N})$, as it is the case when $\gb_N\equiv 0$.
This goes beyond the scope of this article and we leave this as an open problem.

\smallskip
{\bf Comment 6.}
There is some room for improvement in Theorem \ref{R5tilde},
when $\ga\in(1,2]$ and $\zeta\geq\gamma>\zeta-\frac{\ga-1}{\ga}$. Indeed, we then have that $\frac1N S_N$ goes to $\pm 1$, but as in Comment~5 above, disorder should appear in the fluctuations
of the log-partition function and in the second order term for the displacement of $S_N$.
Let us assume that we are on the event where $\frac1N S_N$ goes to $+ 1$ (instead of $-1$)
and let us present a heuristic explanation on what one should expect.
If we assume that $M_N^-=-uN^\rho$ and $M_N^+=N-vN^\rho$, for some $\rho\in(0,1)$ and $u, v\geq0$ with $2u\leq v$ (because of the constraint $M_N^+-2M_N^-\leq N$),
then compared to the case $S_N=N$:

\begin{enumerate}
\item[(i)] there is a diminution of the range reward by $|\hat h| (v-u) N^{\rho-\zeta}$;

\item[(ii)] there is a reduction of the entropic cost by roughly $N^{\rho} \log N$,
coming from the combinatorial term $\binom{N}{N-cN^{\rho}}$ --- this is negligible compared to the range term since $\zeta<0$;

\item[(iii)] there is an energy gain of approximately $\hat{\beta} (\check{X}_{u}^{(2)}-\check{X}_{v}^{(1)})N^{\frac{\rho}{\ga}-\gamma}$,
where $\check{X}_{v}^{(1)},\check{X}_v^{(2)}$ are the scaling limits of
$(\go_x)_{x\leq 0}, (\go_{N-x})_{x\geq 0}$ on a scale $N^{\rho}$.

\end{enumerate}

\noindent
This suggests that for $\ga\in(1,2]$ and $\zeta>\gamma>\zeta-\frac{\ga-1}{\ga}$,
one should take $\rho=\frac{\ga}{\ga-1}(\zeta-\gamma) \in(0,1)$,
so that $N^{\rho-\zeta}\asymp N^{\frac{\rho}{\ga}-\gamma}$.
Note that one recovers  $\rho=0$ if $\gamma=\zeta$ (to be compared with~\eqref{Rn=n}) and $\rho=1$ if $\gamma=\zeta-\frac{\ga-1}{\ga}$ (\textit{i.e.}\ on the boundary of Regions $\tilde R_5-R_3$, see Figure~\ref{F4}).
Additionally, under $\bP_{N,\gb_N,h_N}^{\hat{\go}}$,
one should have $M_N^-=-(1+o(1))\cU N^\rho$ and $M_N^+= N-(1+o(1))\cV N^\rho$, where
$(\cU,\cV)$ are the maximizers of the variational problem
\begin{equation*}
\sup_{0\leq 2u\leq v}\left\{\hat{\beta}\left(\check{X}_{u}^{(2)}-\check{X}_v^{(1)}\right)-|\hat{h}|(v-u)\right\}.
\end{equation*} 
As for Comment~5, we leave this as an open problem.

\subsection{Organisation of the proof and useful notation}

\noindent
Let us give an overview of how the rest of the paper is organized:

\smallskip
\textbullet\ In Section~\ref{sec:proof1}, we start with the proof of Proposition \ref{prop:ballistic}, then we prove Theorems~\ref{C101} to \ref{C105} (in that order), \textit{i.e.}\ we prove the phase diagram of Regions~$R_1$ to~$R_6$; note that regions~$R_4$ to~$R_6$ are specific to the case $\hat h >0$.
The results in Regions $R_2$, $R_4$ and $R_5$ involve competitions between ``energy", ``range" or ``entropy"
(but all have the same scheme of proof), while Regions $R_1$, $R_3$ and $R_6$ are extreme cases where only one factor is significant and hence are much simpler.
Let us stress here that the statements on range size of trajectories $(S_n)_{0\leq n\leq N}$ under $\bP_{N,\gb_N,h_N}^{\go}$ are direct consequences of the convergence of $(M_N^-,M_N^+)$
under $\bP_{N,\gb_N,h_N}^{\go}$, so we do not write their proof explicitly.

\smallskip
\textbullet\ In Section~\ref{sec:proof2}, we prove the remaining Theorems~\ref{thm:R4tilde} to \ref{R5tilde}, \textit{i.e.}\ we complete the phase diagram in the case $\hat h<0$.
Here, the main contribution to the partition function comes from the range term and finding the limit of the rescaled log-partition function is not difficult. The harder part consists in showing that disorder plays a role in deciding whether the random walk moves to the positive or to the negative side: this is done by a careful decomposition of the partition function.

\smallskip
\textbullet\ In Appendix~\ref{sec:app}, we regroup several technical estimates: large deviations for the range of the random walk in Section~\ref{sec:appLD},
deviation for sums of $\go_x$ (\textit{i.e.}\ the proof of Lemma~\ref{lem1} below) in Section~\ref{app:lemma},
the proof of Proposition~\ref{uniquemaximizer} in Section~\ref{sec:unique} and some technical estimate on c\`adl\`ag path in Section~\ref{sec:cadlag}.

\paragraph*{Some further notation and a useful lemma.}
In the rest of the paper, to lighten notation, we will drop the dependence on $\gb_N$ and~$h_N$:
we write  $\bP_N^{\go}$ instead of $\bP_{N,\gb_N,h_N}^{\go}$ and  $Z_N^{\go}$ instead of $Z_{N,\gb_N,h_N}^{\go}$.
We also use the convenient notation
$Z_{N}^{\go}(E)$ for the partition function restricted to trajectories $(S_n)_{n\geq 0}$ in~$E$:
more precisely, 
\begin{equation}
\label{def:PartRestrict}
Z_N^{\go} (E) := \bE\Big[   \exp\Big(  \sum_{x\in \bbZ^d} (\gb_N \go_x - h_N) \ind_{\{x\in \mathcal{R}_N\}} \Big)  \ind_E \Big]\, .
\end{equation}
This way, we have that $\bP_{N}^{\go}(E) = Z_{N}^{\go}(E) /Z_N^{\go}$.

For any $j\ge 0$ let us recall the notation $\Sigma_j^{+}:= \sum_{x=0}^{j} \go_x$ and $\Sigma_{j}^{-} := \sum_{x=-j}^{-1} \go_x$, introduced in \eqref{eq:DefSigma} 
(with the convention that $\Sigma_{0}^{-}=0$).
We then let
\begin{equation}
\label{def:Omegasup}
\Sigma_\ell^*:= \sup_{0\leq j \leq \ell } |\Sigma_j^-| +  \sup_{0\leq j \leq \ell } |\Sigma_j^+| \, .
\end{equation}

Recall that we have set $M_N^+ :=\max_{0 \leq n\leq N} S_n \geq 0$ and $M_N^{-} :=  \min_{0\leq n\leq N} S_n \leq 0$
the right-most and left-most points of the random walk after $N$ steps;
we also denote
\[
M_N^* := \max_{0 \leq n\leq N} |S_n| = \max(M_N^+, -M_N^{-}) \,.
\]
With these notation, notice that we have $\sum_{x\in \cR_N} \go_x  = \Sigma_{M_N^{+}}^{+} + \Sigma_{-M_N^{-}}^{-}$. We now state the following (standard) lemma, that we prove in Appendix~\ref{app:lemma} for completeness.

\begin{lemma}\label{lem1}
Let $\Sigma_\ell^*$ defined as in \eqref{def:Omegasup}. 
Then, under Assumption~\ref{assump1} ($\alpha\in (0,1)\cup(1,2]$), there exists a constant $c \in (1,+\infty)$ such that
 for any $\T>0$ and any $\ell$ we have
\begin{equation}
\bbP \big(\Sigma_\ell^* >\T \big) \le c\,  \ell \, \T^{-\alpha} \, .
\end{equation}
Also, $\bbP$-a.s.\ 
there is a constant $C=C(\omega)$ such that $\Sigma_\ell^* \leq C \ell^{1/\alpha} (\log_2 \ell)^{2/\alpha}$ for all $\ell\geq 1$.
\end{lemma}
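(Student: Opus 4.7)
The random variable $\gO_\ell^*$ is, by definition, the sum of two maxima of absolute partial sums of i.i.d.\ sequences. Since $\{\bbP(\gO_\ell^*>\T)\leq c\ell \T^{-\alpha}\}$ is trivially true (for $c$ large enough) whenever $\ell\T^{-\alpha}\geq 1$, we may restrict attention to the regime $\T \geq \ell^{1/\alpha}$, in which the truncation arguments below have the right scaling. By the union bound
\[
\bbP(\gO_\ell^*>\T)\leq \bbP\big(\sup_{0\leq j\leq \ell}|\gO_j^+|>\T/2\big)+\bbP\big(\sup_{0\leq j\leq \ell}|\gO_j^-|>\T/2\big),
\]
and since $(\gO_j^+)_{j\geq 0}$ and $(\gO_j^-)_{j\geq 0}$ are partial sums of i.i.d.\ sequences with the same law, it is enough to bound the first term.

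\textbf{Step 1: maximal inequality.} I would apply Etemadi's maximal inequality to the i.i.d.\ sequence $(\go_x)_{x\geq 0}$, which gives
\[
\bbP\big(\sup_{0\leq j\leq \ell}|\gO_j^+|>\T/2\big)\leq 3\max_{0\leq j\leq \ell}\bbP\big(|\gO_j^+|>\T/6\big).
\]
This reduces the problem to a tail estimate for a single partial sum, uniformly in $j\leq \ell$.

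\textbf{Step 2: tail of a single sum via truncation.} Under Assumption~\ref{assump1}, $\bbP(|\go_0|>t)\leq Ct^{-\alpha}$ for all $t>0$ (for some constant $C$, adjusted at small $t$). Fix $t=\T/6$, split $\go_i=\go_i\ind_{|\go_i|\leq t}+\go_i\ind_{|\go_i|>t}$, so that
\[
\bbP\big(|\gO_j^+|>t\big)\leq (j+1)\bbP(|\go_0|>t)+\bbP\Big(\Big|\sum_{i=0}^{j}\go_i\ind_{|\go_i|\leq t}\Big|>t\Big).
\]
The first term is $\leq C\ell\T^{-\alpha}$ (up to multiplicative constants). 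For the second term I would distinguish:
\begin{itemize}
\item $\alpha=2$: Chebyshev with $\mathrm{Var}(\go_0\ind_{|\go_0|\leq t})\leq \bbE[\go_0^2]=1$ gives $O(\ell/t^2)$.
\item $\alpha\in(1,2)$: compute $|\bbE[\go_0\ind_{|\go_0|\leq t}]|=|\bbE[\go_0\ind_{|\go_0|>t}]|\leq C_1 t^{1-\alpha}$ (since $\bbE[\go_0]=0$ and $\alpha>1$) and $\mathrm{Var}(\go_0\ind_{|\go_0|\leq t})\leq C_2 t^{2-\alpha}$; in the regime $\T\geq \ell^{1/\alpha}$ the centering correction $(j+1)|\bbE[\go_0\ind_{|\go_0|\leq t}]|$ is $o(t)$, and Chebyshev gives $O(\ell t^{-\alpha})$.
\item $\alpha\in(0,1)$: no centering is required, and $\bbE|\go_0\ind_{|\go_0|\leq t}|\leq \int_0^t\bbP(|\go_0|>s)\dd s\leq C_3 t^{1-\alpha}$, so Markov yields a bound of order $\ell t^{-\alpha}$.
\end{itemize}
In every case the bound is $O(\ell\T^{-\alpha})$, and combining it with Step~1 and the symmetric bound for $\gO_j^-$ gives the claimed inequality.

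\textbf{Main obstacle.} The arguments above are quite standard; the only care needed is to package the three values of $\alpha$ into a single uniform statement, handle the centering correction in the regime $\alpha\in(1,2)$ (absorbed using $\T\geq\ell^{1/\alpha}$), and choose the final constant $c$ large enough so that the estimate remains trivially valid in the range $\T\leq \ell^{1/\alpha}$ where $c\ell\T^{-\alpha}\geq 1$.
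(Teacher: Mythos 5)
Your proposal is correct and follows essentially the same route as the paper: reduce to the non-trivial regime $\ell\T^{-\alpha}\le 1$, apply Etemadi's maximal inequality, then truncate at level $\T$ and bound the big-jump part by a union bound and the truncated sum by a second (or first) moment estimate. The only cosmetic differences are that the paper treats $\alpha\in(0,1)\cup(1,2)$ uniformly by applying Markov to the square of the uncentered truncated sum (absorbing the term $\ell^2\bbE[\bar\go_0]^2/\T^2\le c\,\ell^2\T^{-2\alpha}$ via $\ell\T^{-\alpha}\le 1$), whereas your centering step for $\alpha\in(1,2)$ requires taking the trivial-regime threshold to be a small constant $\epsilon$ rather than $1$ so that the correction is genuinely below $t/2$.
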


\noindent
Finally,
while we keep the distinction between $\bbP$ and $\hat \bbP$,
we will only write $\omega$ (and not $\hat \omega$) in order to lighten the notation,

\section{Proof of the main results}
\label{sec:proof1}

\subsection{Ballisticity of trajectories: proof of Proposition~\ref{prop:ballistic}}

Let $\gep>0$ be fixed.
For $\delta>0$, let us define (recall we assume $\xi>\frac12$)
\[
\cA_N^{\delta} := \bigg\{ \Big| \frac{1}{N^{\xi}} (M_N^-, M_N^+ )   -(-\cU, \cV)\Big| \leq \gd   \bigg\} \,.
\]
Then, by assumption, we have that for any $\gd>0$,
$\lim_{N\to+\infty} \frac{Z_N^{\go} ( \cA_N^{\delta} )  }{Z_N^{\go}} =1$
in $\hat \bbP$-probability (resp.\ $\hat\bbP$-a.s.),
so the proof will be complete if we show that
\begin{equation}
\label{eq:ballistictoshow}
\lim_{N\to+\infty} \frac{Z_N^{\go}\big( \cA_N^{\delta} , \cB_N^{\gep} (\cU,\cV)^c \big) }{Z_N^{\go} \big( \cA_N^{\delta} \big) } =0 \,, \qquad \text{ $\hat\bbP$-a.s.}
\end{equation}
where we refer to \eqref{eq:ball} for the definition of $\cB_N^{\gep} (\cU,\cV)$.

To do so, we decompose the partition function as follows (here and in the rest of the paper, we often omit integer parts for simplicity):
\begin{multline*}
Z_N^{\go}\big( \cA_N^{\delta} , \cB_N^{\gep} (\cU,\cV)^c \big)
  =\sum_{|(x,y)-N^{\xi}(\cU,\cV)| \leq \delta N^\xi}
Z_N^{\go}\big( M_N^{-}=-x, M_N^+ =y , \cB_N^{\gep} (\cU,\cV)^c \big)  \\
 = \sum_{|(x,y)-N^{\xi}(\cU,\cV)| \leq \delta N^\xi} e^{\gb_N (\Sigma_x^{-} +\Sigma_y^+) - h_N (x+y+1)}
\bP\big( M_N^{-}=-x, M_N^+ =y , \cB_N^{\gep} (\cU,\cV)^c \big) 
\end{multline*}
Now, for $\gd>0$ small enough,
we have thanks to~\eqref{ldp:localballistic}
(recall $\cU,\cV\geq 0$ with $\cU\neq \cV$) that
\begin{equation}
\label{eq:locallargedev}
\bP\big( M_N^{-}=-x, M_N^+ =y , \cB_N^{\gep} (\cU,\cV)^c \big) 
 \leq e^{- c\, N^{2\xi-1}}
 \bP\big(M_N^{-}=-x, M_N^+ =y\big)
\end{equation}
uniformly for  $x,y$ such that  $|(x,y)-N^{\xi}(\cU,\cV)| \leq \delta N^\xi$,
for some constant $c = c_{\gd,\gep} (\cU,\cV) >0$.

Using~\eqref{eq:locallargedev},
we obtain that
\begin{align*}
Z_N^{\go}\big( \cA_N^{\delta} &, \cB_N^{\gep} (\cU,\cV)^c \big)\\
 & \leq e^{- c\, N^{2\xi-1}}  \sum_{|(x,y)-N^{\xi}(\cU,\cV)| \leq \delta N^\xi} e^{\gb_N (\Sigma_x^{-} +\Sigma_y^+) - h_N (x+y+1)}
\bP\big( M_N^{-}=-x, M_N^+ =y \big) \\
&  \leq e^{- c\, N^{2\xi-1}}  \sum_{ |(x,y)-N^{\xi}(\cU,\cV)| \leq \delta N^\xi} Z_{N}^{\go} \big( M_N^{-}=-x, M_N^+ =y  \big)  \leq e^{- c\, N^{2\xi-1}}  Z_N^{\go}\big( \cA_N^{\delta} \big) \,, 
\end{align*}
which shows~\eqref{eq:ballistictoshow}.
\qed

\subsection{Region R1: Proof of Theorem \ref{C101}}\label{sec:reg1}
 Recall that in Region $R_1$ we have
\[
\begin{cases} \gamma>\frac{1}{2\ga} \quad \text{and} \quad \zeta>\frac12, &\quad \text{if}\quad \alpha \in  [\frac{1}{2},1)\cup(1,2], \\
\gamma>\frac{1-\alpha}{\ga} \quad \text{and} \quad \zeta>\frac12, &\quad \text{if}\quad \alpha \in (0,\frac{1}{2}).
\end{cases}
\]
Let us note that we always have $\gamma > \frac{1}{2\alpha}$, since $\frac{1-\ga}{\ga} >\frac{1}{2\ga}$ when $\ga<1/2$.

\subsubsection*{Convergence of the partition function}
Fix $A$ (large) and split  the partition function in the following way
\begin{equation}\label{E201}
Z_{N}^{\omega}=Z_{N}^{\omega}\big( M_N^* \leq A\sqrt{N}\big)+Z_{N}^{\omega}\big(M_N^*>A\sqrt{N}\big).
\end{equation}

\paragraph*{Upper bound.}
 Recalling the definition~\eqref{def:PartRestrict} of the restricted partition function, one easily sees that
\begin{equation}\label{E202}
\begin{split}
Z_{N}^{\omega}\big( M_N^* \leq A\sqrt{N}\big)
&\leq\exp\left(\hat{\beta}N^{-\gamma} \Sigma_{A\sqrt{N}}^* + 2A |\hat h| N^{\frac12 -\zeta} \right)\mathbf{P}\big(M_N^*\leq A\sqrt{N}\big)  \\
& \leq \exp\left( \hat{\beta}N^{-\gamma} \Sigma_{A\sqrt{N}}^* + 2 A |\hat h| N^{\frac12 -\zeta} \right)
\end{split}
\end{equation}
Notice that $N^{\frac12 -\zeta}$ goes to $0$ as $N\to\infty$, since $\zeta>\frac12$.
Also,  by Lemma \ref{lem1}, since $\gamma > \frac{1}{2\alpha}$ we get that 
$N^{-\gamma} \Sigma_{A\sqrt{N}}^*$ goes to $0$ almost surely.
We therefore get that 
$\limsup_{N\to\infty} Z_{N}^{\omega}( M_N^* \leq A\sqrt{N}) \leq 1$ $\bbP$-a.s.

\smallskip 
It remains to show that  the second term in \eqref{E201} is also small.
 We split the 
partition function as 
\begin{align*}
Z_{N}^{\omega}\Big(M_N^*>A\sqrt{N}\Big) & \le\sum\limits_{k=1}^{ \log_2(\frac{1}{A}\sqrt N ) }Z_{N}^{\omega}\Big(M_N^*\in(2^{k-1}A\sqrt{N},2^{k}A\sqrt{N}]\Big) \notag\\
&\leq \sum\limits_{k=1}^{ \log_2(\frac{1}{A}\sqrt N ) } \exp\Big(  \hat \gb N^{-\gamma} \Sigma^*_{2^k A\sqrt{N}}  + 2^{k+1} A |\hat h| N^{\frac12-\zeta}\Big) \bP\big( M_N^* \geq 2^{k-1} A \sqrt{N}  \big) \,.
\end{align*}
Then, it is standard to get that $\bP(M_N^* >x) \leq 2  \exp( -\frac{x^2}{2N})$
 for any $x>0$ and $N\in \bbN$ (thanks to L\'evy's inequality and a standard Chernov bound), so that
\begin{equation}\label{E208}
\bbP\big(M_N^*>2^{k-1}A\sqrt{N}\big)\leq 2 \exp\left(-2^{2k-3}A^{2}\right) \, .
\end{equation}
Hence, choosing $N$ large enough so that $|\hat h| N^{\frac12-\zeta} \leq 2^{-5} A$ and in particular $ 2^{k+1} A |\hat h| N^{\frac12-\zeta} \leq 2^{2k-4} A^2$, we get $\bbP$-a.s.
\begin{equation}
\label{sumkboundZ}
Z_{N}^{\omega}\Big(M_N^*>A\sqrt{N}\Big) \leq \sum\limits_{k=1}^{ \log_2(\frac{1}{A}\sqrt N ) } 2 \exp\Big(  C(\hat \gb,A,\go) 2^{k/\alpha} N^{\frac{1}{2\ga}-\gamma} (\log_2 N)^{2/\alpha}  -2^{2k-4} A^2 \Big)\, ,
\end{equation}
where we also used Lemma~\ref{lem1}
to get an almost sure bound on $\Sigma^*_{2^k A\sqrt{N}}$.
Now, note that uniformly for $k$ in the sum,
\[
\frac{2^{k/\alpha} N^{\frac{1}{2\ga}-\gamma} (\log_2 N)^{2/\alpha} }{2^{2k} }
\leq  (\log_2 N)^{2/\alpha} \times
\begin{cases}
N^{\frac{1}{2\ga}-\gamma} & \quad \text{ if } \alpha \in [\frac12,2]\,, \\
A^{- \frac{1-2\alpha}{2}} N^{\frac{1}{\ga}-1-\gamma}  & \quad \text{ if } \alpha \in (0,\frac12) \,.
\end{cases}
\]
This (uniform) upper bound goes to $0$ as $N\to\infty$,
because $\gamma>\frac{1}{2\alpha}$ if $\ga\geq \frac12$
and $\gamma > \frac{1-\alpha}{\alpha}$ if $\ga\in (0,\frac12)$.

All together, we get that $\bbP$-a.s., for $N$ large enough,
\[
Z_{N}^{\omega}\Big(M_N^*>A\sqrt{N}\Big)
\leq \sum\limits_{k=1}^{ \log_2(\frac{1}{A}\sqrt N ) } 2 \exp\Big(   -2^{2k-5} A^2 \Big) \leq C \exp( - A^2/8) \,.
\]
We have therefore proven that
for any $A>0$, 
$\limsup_{N\to\infty} Z_{N}^{\go} \leq 1 + Ce^{-A^2/8}$.
Since $A$ is arbitrary,
this gives that $\limsup_{N\to\infty} Z_{N}^{\go} \leq 1$.

\paragraph*{Lower bound.}
For the lower bound, we use that 
\begin{equation}\label{E203}
Z_N^{\go} \geq Z_{N}^{\omega}(M_N^*\leq A\sqrt{N}) \geq \exp\left(-\hat{\beta}N^{-\gamma} \Sigma^*_{A\sqrt{N}}-2A|\hat{h}| N^{\frac{1}{2}-\zeta}\right)\mathbf{P}\left(M_N^*\leq A\sqrt{N}\right).
\end{equation}
As above, we get that $\hat{\beta}N^{-\gamma} \Sigma^*_{A\sqrt{N}}+2A|\hat{h}| N^{\frac{1}{2}-\zeta}$ goes to $0$ almost surely.
Using that $\mathbf{P}(M_N^*\leq A\sqrt{N}) \geq 1-2 e^{- A^{2}/2}$,
we therefore get that 
$\liminf_{N\to\infty} Z_{N}^{\go} \geq 1 -2e^{-A^2/2}$ $\bbP$-a.s.
Since $A$ is arbitrary,
this gives $\liminf_{N\to\infty} Z_{N}^{\go} \geq 1$ $\bbP$-a.s.,
which concludes the proof.
\qed

\subsection*{Convergence in total variation distance}

We show that for any $\epsilon\in(0,\frac{1}{8})$,
\begin{equation}\label{TVdistance}
\limsup\limits_{N\to\infty}\sup\limits_{\cB}\big|\bP_N^{\go}(\cB)-\bP(\cB)\big|<5\epsilon \qquad \text{$\bbP$-a.s.},
\end{equation}
where $\cB$ ranges over all $\bP$-measurable sets. This implies the convergence from $\bP_N^\omega$ to $\bP$ in total variation distance, since $\epsilon$ is arbitrary.

Let $\cC_{N,\gep}:=\{\go: |Z_N^\go-1|<\epsilon\}$: we have proven above that $\bbP$-a.s. $\lim_{N\to\infty}\ind_{\cC_{N,\gep}}=1$. Note that $\bP_N^{\go}(\cB)=Z_N^\go(\cB)/Z_N^\go$. Hence, on the event $\cC_{N,\epsilon}$, we have 
\begin{equation*}
\frac{1}{1+\epsilon}\big(Z_N^{\go}(\cB)-\bP(\cB)\big)-\frac{\epsilon}{1+\epsilon}<\bP_N^\go(\cB)-\bP(\cB)<\frac{1}{1-\epsilon}\big(Z_N^{\go}(\cB)-\bP(\cB)\big)+\frac{\gep}{1-\gep},
\end{equation*}
where we also used that $\bP(\cB)\leq1$.
Therefore, to prove \eqref{TVdistance}, we need to show that
$\limsup_{N\to\infty}\sup_{\cB}|Z_N^\go(\cB)-\bP(\cB)|<\epsilon, \bbP$-a.s..
For $A>0$, we have that
\begin{multline*}
|Z_N^{\go}(\cB)-\bP(\cB)|\leq\Big|Z_N^\omega\big(\cB\cap\{M_N^*\leq A\sqrt{N}\}\big)-\bP\big(\cB\cap\{M_N^*\leq A\sqrt{N}\}\big)\Big|\\
+Z_N^\go\big(M_N^*> A\sqrt{N}\big)+\bP\big(M_N^*> A\sqrt{N}\big).
\end{multline*}
As seen above, we have  $\limsup_{N\to\infty} Z_N^\go(M_N^*> A\sqrt{N}) \leq Ce^{-A^2/8}$ almost surely, and  also $\bP(M_N^*> A\sqrt{N})\leq 2 e^{-A^2/2}$:
these two terms can be made arbitrarily small by taking~$A$ large.
Hence it is enough to show that for any $A>0$
\begin{equation*}
\limsup_{N\to\infty} \sup\limits_{\cB}\Big|Z_N^\go\big(\cB\cap\{M_N^*\leq A\sqrt{N}\}\big)-\bP\big(\cB\cap\{M_N^*\leq A\sqrt{N}\}\big)\Big| =0 \qquad \text{$\bbP$-a.s.}.
\end{equation*}
Analogously to \eqref{E202} and \eqref{E203} we have,
for any measurable $\cB$
\begin{equation*}
\begin{split}
& Z_N^\go\big(\cB\cap\{M_N^*\leq A\sqrt{N}\}\big) \leq\exp\left(\hat{\beta}N^{-\gamma} \Sigma_{A\sqrt{N}}^*+ 2A|\hat{h}| N^{\frac{1}{2}-\zeta}\right)\mathbf{P}\big(\cB\cap\{M_N^*\leq A\sqrt{N}\}\big),\\
& Z_N^\go\big(\cB\cap\{M_N^*\leq A\sqrt{N}\}\big) \geq \exp\left(-\hat{\beta}N^{-\gamma} \Sigma^*_{A\sqrt{N}}-2A|\hat{h}| N^{\frac{1}{2}-\zeta}\right)\mathbf{P}\big(\cB\cap\{M_N^*\leq A\sqrt{N}\}\big) \, ,
\end{split}
\end{equation*}
so $\big|Z_N^\go (\cB\cap\{M_N^*\leq A\sqrt{N}\})-\bP(\cB\cap\{M_N^*\leq A\sqrt{N}\})\big|$
is bounded by
\[
\left|\exp\left(\hat{\beta}N^{-\gamma} \Sigma_{A\sqrt{N}}^*+ 2A|\hat{h}| N^{\frac{1}{2}-\zeta}\right)-1\right| + \left|\exp\left(-\hat{\beta}N^{-\gamma} \Sigma^*_{A\sqrt{N}}-2A|\hat{h}| N^{\frac{1}{2}-\zeta}\right)-1\right| \,,
\]
where we also bounded $\bP(\cB\cap\{M_N^*\leq A\sqrt{N}\})$
by $1$.
Since $\hat{\beta}N^{-\gamma} \Sigma_{A\sqrt{N}}^*+ 2A|\hat{h}| N^{\frac{1}{2}-\zeta}$ goes to $0$ almost surely (see~Lemma~\ref{lem1}),
this concludes the proof.
\qed

\subsection{Region R2: proof of Theorem \ref{C102}}
Recall that in Region $R_2$ we have 
\[
2\xi-1 = \frac{\xi}{\alpha} -\gamma > \xi-\zeta \quad  \text{with} \ \ \alpha \in (\tfrac12,1)\cup(1,2] \, ,
\]
and that this region does not exist when $\ga<1/2$.
We prove that the range size is of order~$N^{\xi}$ with $\xi = \tfrac{\ga}{2\ga-1}(1-\gamma)$.

\subsubsection*{Convergence of the rescaled log-partition function}

We fix some $A$ large and we split the partition function as
\begin{equation}\label{E301}
Z_{N}^{\omega}=Z_{N}^{\omega}\left(M_N^*\leq AN^{\xi}\right)+Z_{N}^{\omega}\left(M_N^*> AN^{\xi}\right).
\end{equation}

The proof of the convergence is divided into three steps: 
(1) we show that after taking logarithm and dividing by $N^{2\xi-1}$, the first term converges to some random variable $\mathcal{W}_{2}^A$ as $N \to \infty$;
(2) we show that the second term is small compared to the first one;
(3) we let $A\to\infty$ and we observe that $\mathcal{W}_{2}^A$ converges to~$\cW_{2}$.

\paragraph*{Step 1.}
We prove the following lemma.

\begin{lemma}
\label{lem:R2step1}
In Region~$R_2$, we have that $\hat\bbP$-a.s., for any $A \in \bbN$, 
\[
\lim_{N\to\infty} \frac{1}{N^{2\xi-1}}  \log Z_{N}^{\omega}\left(M_N^*\leq AN^{\xi}\right)  =  \mathcal{W}_{2}^A:=\sup\limits_{ u,v \in[0,A] }\left\{\hat{\beta}(X_{v}^{(1)} +X_{u}^{(2)}) - I(u,v)  \right\} \, ,
\]
with $(X_{v}^{(1)},X_{u}^{(2)})_{u,v\geq 0}$ from Notation~\ref{defL} and $I(u,v) :=\frac12 (u\wedge v +u+v)^2$.
\end{lemma}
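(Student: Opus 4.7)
The plan is to decompose $Z_N^\omega(M_N^*\le AN^\xi)$ according to the rescaled endpoints $(M_N^-/N^\xi,\, M_N^+/N^\xi)$ of the range, discretise on a fine grid over the compact rectangle $[-A,0]\times[0,A]$, and then let the mesh tend to zero. Three asymptotic inputs drive the argument in Region~$R_2$: the stable scaling
\[
\beta_N\,\Omega^{\pm}_{\lfloor tN^\xi\rfloor} \;\approx\; \hat\beta\,N^{2\xi-1}\,X_{\pm t},
\]
valid precisely because $\xi/\alpha-\gamma=2\xi-1$; the negligibility of the range term, since $\xi-\zeta<2\xi-1$ in $R_2$ by the very definition of the region; and the entropy asymptotics for the joint law of $(M_N^-,M_N^+)$ with rate $I(u,v)=\frac12(|u|\wedge|v|+v-u)^2$ (Lemma~\ref{lem:superdiffusive} and its refinement Lemma~\ref{lem:superdiffusive2} in the Appendix).

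More concretely, I fix $\delta>0$ and partition $[-A,0]\times[0,A]$ into closed cells $C_{u,v}^\delta:=[u,u+\delta]\times[v,v+\delta]$ indexed by a finite grid, and write
\[
Z_N^\omega(M_N^*\le AN^\xi) = \sum_{(u,v)} Z_N^\omega\bigl((M_N^-/N^\xi,\,M_N^+/N^\xi)\in C_{u,v}^\delta\bigr).
\]
On the event defining such a cell, $\sum_{x\in\cR_N}\omega_x = \Omega^+_{M_N^+}+\Omega^-_{-M_N^-}$, and the Skorokhod coupling of Definition~\ref{defL} and the following Remark gives $N^{-(2\xi-1)}\beta_N\bigl(\Omega^+_{M_N^+}+\Omega^-_{-M_N^-}\bigr) = \hat\beta(X_v-X_u)+o(1)+O(\eta_\delta)$, where $\eta_\delta$ controls the variation of $X$ on intervals of length~$\delta$. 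The range term is bounded by $|h_N|\,|\cR_N|\le 2A|\hat h|\,N^{\xi-\zeta} = o(N^{2\xi-1})$, uniformly over cells. The residual probability satisfies
\[
\frac{1}{N^{2\xi-1}}\log\bP\bigl((M_N^-,M_N^+)/N^\xi\in C_{u,v}^\delta\bigr) \;=\; -I(u,v)+o_\delta(1),
\]
yielding $N^{-(2\xi-1)}\log Z_N^\omega(C_{u,v}^\delta) = \hat\beta(X_v-X_u)-I(u,v)+o(1)+O(\eta_\delta)$, $\bbP$-a.s.

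The upper bound on $N^{-(2\xi-1)}\log Z_N^\omega(M_N^*\le AN^\xi)$ follows by bounding the sum by $O(\delta^{-2})$ times the maximum over cells, whose logarithmic contribution is absorbed into $o(1)$. The matching lower bound follows by keeping only the cell containing a near-maximiser of the discretised variational problem. Letting $\delta\to 0$ along a countable sequence, and using the $\bbP$-a.s.\ càdlàg regularity of $X$ together with the continuity of $I$ on the compact rectangle, both bounds converge to $\cW_{R_2}^A$.

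The main obstacle I anticipate is the two-sided entropy estimate: I need a large-deviation bound for the joint law of $(M_N^-,M_N^+)/N^\xi$ with the precise rate $I(u,v)$, uniform enough in $(u,v)$ on the rectangle to survive the discretisation step (the intuition being that the cheapest way to realise both endpoints is to traverse the shorter side first and then the full range, giving effective distance $|u|\wedge|v|+v-u$). A secondary technical point, present only in the heavy-tailed case $\alpha\in(\tfrac12,1)\cup(1,2)$, is the control of the oscillation of the disorder sums over cells of size $\delta N^\xi$ when $X$ has jumps; this is handled via a continuity argument in the Skorokhod $J_1$ topology and the fact that, outside a $\bbP$-null set, $X$ is continuous at Lebesgue-a.e.\ point, so that $\cW_{R_2}^A$ is recovered as the limit of its grid approximations.
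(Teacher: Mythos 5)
Your proposal is correct and follows essentially the same route as the paper's proof: the same grid decomposition over $[-A,0]\times[0,A]$ indexed by the rescaled endpoints $(M_N^-,M_N^+)/N^\xi$, the same three inputs (stable scaling of $\beta_N\Omega^\pm$ at rate $N^{2\xi-1}$ via $\xi/\alpha-\gamma=2\xi-1$, negligibility of the range term since $\xi-\zeta<2\xi-1$, and the two-sided entropy estimate \eqref{AE01} with rate $I(u,v)$), the same max-over-cells upper/lower sandwich, and the same $\delta\downarrow 0$ limit via c\`adl\`ag regularity controlling the oscillation of the disorder sums within cells. The two-sided large-deviation input you flag as the main obstacle is exactly Lemma~\ref{lem:superdiffusive} and its consequence \eqref{AE01} in the Appendix, so no gap remains.
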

\begin{proof}
Let us fix $\delta>0$ and write 
\begin{equation}\label{E302}
Z_N^{\go,\leq } := Z_{N}^{\omega}\big(M_N^*\leq AN^{\xi}\big) =\sum\limits_{k_{1}=0}^{\lfloor A/\gd \rfloor} \sum\limits_{k_{2}=0}^{\lfloor A/\gd\rfloor}Z_{N}^{\omega}(k_{1},k_{2},\delta),
\end{equation}
where we define
\begin{equation}\label{E303}
Z_{N}^{\omega}(k_{1},k_{2},\delta):=Z_{N}^{\omega}\Big(M_N^-\in( -(k_{1}+1)\delta N^{\xi},-k_{1}\delta N^{\xi}],M_N^+\in [k_{2}\delta N^{\xi},(k_{2}+1)\delta N^{\xi})\Big)
\end{equation}
(recall $M_N^- :=\min_{0\leq n\leq N} S_n$ and $M_N^+ :=\max_{0\leq n\leq N} S_n$).
Since there are at most~$(A/\gd)^2$ terms in the sum, 
we easily get that 
\begin{equation}
\max_{0\leq k_1, k_2 \leq \frac{A}{\gd}}  \log  Z_{N}^{\omega}(k_{1},k_{2},\delta)  \leq  \log Z_{N}^{\go,\leq }  \leq   2\log (\tfrac{A}{\gd})  + \max_{0\leq k_1, k_2 \leq \frac{A}{\gd}}   \log  Z_{N}^{\omega}(k_{1},k_{2},\delta) \, .
\end{equation}

\smallskip
\noindent
{\it Upper bound.}
An upper bound on $\log Z_{N}^{\omega}(k_{1},k_{2},\delta)$
is given by
\begin{equation*}
 \gb_N \Big( \Sigma_{ \lfloor k_{1}\delta N^{\xi}\rfloor}^- + \Sigma_{ \lfloor k_{2}\delta N^{\xi}\rfloor}^+\Big)  + \gb_N R_{N}^{\gd}(k_{1} \gd,k_{2}\delta) + |\hat{h}|(k_1+k_2+2)\delta N^{\xi-\zeta} + p_{N}^{\gd}(k_{1}\gd,k_{2}\gd),
\end{equation*}
where for $u,v\geq 0$ we defined
\begin{equation}\label{E306}
R_{N}^{\gd}(u,v) :=\max\limits_{ uN^{\xi}\leq j \leq (u+\delta) N^{\xi}-1}  \Big| \Sigma_j^{-} - \Sigma_{\lfloor u N^{\xi} \rfloor }^{-}  \Big|+\max\limits_{v N^{\xi}\leq j \leq (v+\delta) N^{\xi}-1} \Big|\Sigma_j^{+} - \Sigma_{\lfloor v N^{\xi} \rfloor}^{+}   \Big| \, ,
\end{equation}
and
\begin{equation}\label{E307}
p_{N}^{\gd}(u,v):= \log \mathbf{P}\Big(M_N^- \in \big( -(u+\delta) N^{\xi}, -u N^{\xi} \big],M_N^+\in \big[ v N^{\xi},(v+\gd) N^{\xi} \big)\Big).
\end{equation}


\noindent
Let us write $u=k_{1}\delta, v=k_{2}\delta$ and set $ U_{\gd}=U_\delta(A)=\{0,\delta,2\gd,\ldots,A\}$:
using that $2\xi-1 =\xi/\ga-\gamma$,
we get that
\begin{equation}
\label{E308}
\begin{split}
\max_{0\leq k_1, k_2 \leq \frac{A}{\gd}}  \frac{ \log Z_{N}^{\omega} (k_1,k_2,\gd) }{N^{2\xi-1}} \leq  & \max_{u,v \in U_{\gd}}  \Big\{   \hat \gb  N^{-\frac{\xi}{\ga}}\big(  \Sigma_{\lfloor u N^{\xi}\rfloor}^{-} +  \Sigma_{\lfloor v N^{\xi}\rfloor}^{+} \big) + \hat\gb N^{-\frac{\xi}{\ga}} R_{N}^{\gd}(u,v)\\
&\quad  +|\hat{h}|(u+v+2\delta)N^{(\xi-\zeta)-(2\xi-1)}  + N^{-(2\xi-1)} p_N^{\gd}(u,v) \Big\} \,.
 \end{split}
\end{equation}
 It is easy to see that the third term in the maximum goes to $0$ uniformly in $u, v$, since $u+v+2\delta<3A$ and since $\xi-\zeta<2\xi-1$ in Region $R_2$. Note that  thanks to the coupling introduced in Section~\ref{sec:setting} we have that $ ( N^{-\xi/\ga}\Sigma_{\lfloor u N^{\xi} \rfloor}^{-})_{u\in[0,A+\gd]}$ and $( N^{-\xi/\ga}\Sigma_{\lfloor v N^{\xi} \rfloor}^{+})_{v\in [0,A+\gd]}$
converge $\hat\bbP$-a.s.\ in the Skorokhod topology to two independent L\'evy processes
 $(X_u^{(2)})_{u\in [0,A+\gd]}$ and $(X_v^{(1)})_{v\in [0,A+\gd]}$
 (of Notation~\ref{defL}).

Note also that thanks to Lemma~\ref{lem:superdiffusive} (see~\eqref{AE01})
we have 
\[
\lim_{N\to\infty}  N^{-(2\xi-1)} p_N(u,v,\gd)  = - I(u,v) \, , \quad  \text{with }\  I(u,v):= \frac12 (u\wedge v + u+ v)^2\, , \ \ u,v\geq 0\,.
\]

Since $U_\delta$ is a finite set, by (1.10) in \cite{Sato99}, the limiting L\'evy processes $X^{(1)}_v$ and $X^{(2)}_u$ are $\hat\bbP$-a.s.\ continuous at every point in $U_\delta$.
Hence, thanks to Lemma \ref{cadlag},
$\hat\bbP$-a.s., for any $\gep>0$ there is 
 a random integer $N_0 = N_0(\gep,\gd,\omega)$, such that for all $N\geq N_0$,
\begin{align*}
& N^{-\frac{\xi}{\ga}}R_N^\delta(u,v)\leq 2\gep+\sup\limits_{u\leq u'\leq u+\delta +\gep}|X^{(2)}_{u'}-X^{(2)}_u|+\sup\limits_{v\leq v'\leq v+\delta+\gep}|X^{(1)}_{v'}-X^{(1)}_v| \,,\\
&
 \big|N^{-\frac{\xi}{\ga}}  \Sigma_{\lfloor u N^{\xi}\rfloor}^{-}  -  X_u^{(2)}\big|  \leq \gep 
 \qquad \text{and} \qquad   \big|N^{-\frac{\xi}{\ga}}  \Sigma_{\lfloor v N^{\xi}\rfloor}^{+}  -  X_v^{(1)}\big|  \leq \gep \,,
\end{align*}
uniformly for all $u, v\in U_\delta$.
Now letting $N\to\infty$ and then $\gep\to0$,
we readily have that the $\limsup$ as $N\to+\infty$ of the right-hand side of \eqref{E308} is $\hat \bbP$-a.s.\ smaller than
\begin{equation}\label{E312}
\what \cW_{2}^{A,\gd} :=\max_{u,v \in U_{\gd}}  \Big\{\hat{\beta}(X_{u}^{(2)}+X_{v}^{(1)})+\hat \gb \sup_{0\leq t\leq\delta}|X_{u+t}^{(2)}-X_{u}^{(2)}|+\hat \gb \sup_{0\leq t\leq\delta}|X_{v+t}^{(1)}-X_{v}^{(1)}| - I(u,v)\Big\}.
\end{equation}

\smallskip
\noindent
{\it Lower bound.}
On the other hand, a lower bound on $\log Z_{N}^{\omega}(k_{1},k_{2},\delta)$ is given by
\begin{equation*}
 \gb_N \Big( \Sigma_{ \lfloor k_{1}\delta N^{\xi}\rfloor}^- + \Sigma_{ \lfloor k_{2}\delta N^{\xi}\rfloor}^+\Big) - \gb_N R_{N}^{\gd}(k_{1}\gd,k_{2}\gd)
- |\hat h|(k_{2}+k_{1} +2)\delta N^{\xi}  + p_{N}^{\gd}(k_{1}\gd,k_{2}\gd). 
\end{equation*}
Thus, setting $u=k_{1}\delta, v=k_{2}\delta$ and  $U_{\gd}=\{0,\delta,\ldots,A\}$ as above, we obtain
\begin{multline*}\label{E315}
\max_{0\leq k_1, k_2 \leq \frac{A}{\gd}}  \frac{ \log Z_{N}^{\omega} (k_1,k_2,\gd) }{N^{2\xi-1}}  \geq   \max_{u,v \in U_{\gd}}  \Big\{  \hat \gb  N^{-\frac{\xi}{\ga}}  \big(  \Sigma_{\lfloor u N^{\xi}\rfloor}^{-} +  \Sigma_{\lfloor v N^{\xi}\rfloor}^{+} \big) - \hat\gb N^{-\frac{\xi}{\ga}} R_{N}^{\gd}(u,v)\\
 - |\hat h| (u+v +2\gd) N^{(\xi-\zeta)-(2\xi-1) }   + N^{-(2\xi-1)} p_N^{\gd}(u,v) \Big\} \, .
\end{multline*}
We get as above that the  $\liminf$ as $N\to+\infty$ of the  right-hand side
is $\hat \bbP$-a.s.\ larger than
\begin{equation}\label{E316}
\wcheck \cW_{2}^{A,\gd} := \max\limits_{u,v\in U_\gd} \Big\{\hat{\beta}(X_{u}^{(2)}+X_{v}^{(1)})-\hat \gb \sup_{0\leq t\leq\delta}|X_{u+t}^{(2)}-X_{u}^{(2)}|-\hat \gb \sup_{0\leq t\leq\delta}|X_{v+t}^{(1)}-X_{v}^{(1)}| - I(u,v)\Big\}.
\end{equation}

\smallskip
\noindent
{\it Conclusion.}
 Summarizing, 
we have $\hat \bbP$-a.s.\ the upper bound~\eqref{E312}
and the lower bound~\eqref{E316} for $\limsup_N N^{-(2\xi-1)} \log Z_N^{\go,\leq}$ and $\liminf_N N^{-(2\xi-1)} \log Z_N^{\go,\leq}$ respectively.
Notice that, since trajectories of L\'evy processes are a.s.\ c\`ad-l\`ag  (continuous in the case of Brownian motion),
we have that
\[
\lim_{\gd\downarrow 0} \wcheck\cW_{2}^{A,\gd} = \lim_{\gd\downarrow 0 } \what\cW_{2}^{A,\gd} = \sup_{u,v\in [0,A]} \Big\{\hat{\beta}(X_{u}^{(2)}+X_{v}^{(1)})- I(u,v)\Big\} \, ,
\]
which is exactly $\cW_{2}^{A}$.
\end{proof}

\paragraph*{Step 2.}
Next, we prove the following result.
\begin{lemma}
\label{lem:R2step2}
In Region $R_2$, there is some $A_0>0$ and some constant $C=C_{\hat \gb}$ such that, for all $A\ge A_0$, for all $N\geq 1$
\[
\bbP\Big(  \frac{1}{N^{2\xi-1}} \log Z_{N}^{\omega}\left(M_N^*>AN^{\xi}\right)  \geq  - 1  \Big) \leq  C A^{1-2\ga} \, .
\]
\end{lemma}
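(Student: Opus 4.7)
My plan is to mimic the dyadic decomposition already used for Region~$R_1$ around \eqref{E204}--\eqref{E208_3}, adapted to the scale $N^{\xi}$. Writing $K_N:=\lfloor \log_2(N^{1-\xi}/A)\rfloor$ so that $2^{K_N}AN^{\xi}\le N$, I would split
\[
Z_N^{\omega}\big(M_N^*>AN^\xi\big) \le \sum_{k=0}^{K_N} Z_N^{\omega}\big(M_N^*\in(2^kAN^\xi,2^{k+1}AN^\xi]\big),
\]
and on each shell use the crude upper bound
\[
Z_N^{\omega}\big(M_N^*\in(2^kAN^\xi,2^{k+1}AN^\xi]\big) \le \exp\big(\hat\beta N^{-\gamma}\Omega^*_{\ell_k}+|h_N||\cR_N|\ind_{\{h_N<0\}}\big)\,\bP(M_N^*>2^kAN^\xi),
\]
with $\ell_k:=2^{k+1}AN^\xi$. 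For the probability factor, L\'evy's inequality plus a Chernov bound give $\bP(M_N^*>x)\le 2e^{-x^2/(2N)}$, so the exponent is at most $-2^{2k-1}A^2N^{2\xi-1}$. This is the ``entropy budget'' against which we must absorb the disorder and range contributions.

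The key step is to define, for each $k$, the threshold $T_k:=\tfrac{1}{8\hat\beta}\,2^{2k}A^2\,N^{\xi/\alpha}$ (which, using the region identity $2\xi-1=\xi/\alpha-\gamma$, equals $\tfrac14\hat\beta^{-1}\,2^{2k-1}A^2N^{2\xi-1+\gamma}$), and to apply Lemma~\ref{lem1} to each of the $K_N+1$ shells:
\[
\bbP\big(\Omega^*_{\ell_k}>T_k\big)\le c\,\ell_k\,T_k^{-\alpha}\le C_{\hat\beta}\,2^{k(1-2\alpha)}\,A^{1-2\alpha}.
\]
Since $\alpha>1/2$ in region~$R_2$, the series $\sum_{k\ge 0}2^{k(1-2\alpha)}$ is summable, so a union bound gives the desired $\bbP$-estimate $\le CA^{1-2\alpha}$ of the bad event $\cE:=\bigcup_k\{\Omega^*_{\ell_k}>T_k\}$. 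On~$\cE^c$ the disorder contribution $\hat\beta N^{-\gamma}\Omega^*_{\ell_k}$ is at most $\tfrac14\cdot 2^{2k-1}A^2N^{2\xi-1}$, i.e.\ a quarter of the entropy budget.

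For the range contribution, I would use $|\cR_N|\le 2^{k+2}AN^{\xi}$ on the $k$-th shell, so that when $h_N<0$ it is bounded by $4\cdot 2^k\,A\,|\hat h|\,N^{\xi-\zeta}$. The region constraint $\xi-\zeta<2\xi-1$ (together with $\zeta>0$, which one can read off the defining inequalities of~$R_2$) implies $N^{\xi-\zeta}=o(N^{2\xi-1})$ uniformly in $k\le K_N$, so for $N$ large enough this term is also bounded by a quarter of the entropy budget. Combining everything, on~$\cE^c$ each shell is at most $2\exp(-2^{2k-2}A^2N^{2\xi-1})$; summing the geometric-type series gives $Z_N^{\omega}(M_N^*>AN^\xi)\le C'\exp(-\tfrac14 A^2N^{2\xi-1})$, whence $N^{-(2\xi-1)}\log Z_N^{\omega}(M_N^*>AN^\xi)\le -1$ as soon as $A\ge A_0$ and $N$ is large.

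The only mild subtlety I anticipate is the bookkeeping needed to ensure the dyadic sum of the ``bad'' probabilities and of the ``good'' exponential bounds both converge; this is governed precisely by the condition $2\alpha>1$, which is exactly the boundary defining region~$R_2$ (and the reason why $R_2$ disappears for $\alpha\le1/2$). The contribution of the range penalty when $\hat h<0$ is benign here because we are strictly inside~$R_2$, where $\zeta>1-\xi$.
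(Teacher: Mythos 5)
Your proposal is correct and follows essentially the same route as the paper's proof: a dyadic decomposition into shells $M_N^*\in(2^{k}AN^{\xi},2^{k+1}AN^{\xi}]$, the bound $\bP(M_N^*>x)\le 2e^{-x^2/2N}$ for the entropy, Lemma~\ref{lem1} applied shell by shell with a union bound, summability of $\sum_k 2^{k(1-2\ga)}$ from $\ga>\frac12$, and absorption of the range term via $\xi-\zeta<2\xi-1$. The only differences (truncating the sum at $K_N$ and phrasing the union bound through a bad event $\cE$) are cosmetic.
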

\noindent
Since $\ga>1/2$ in Region $R_2$,
Lemma \ref{lem:R2step2} implies that for any $\gep>0$ we can choose $A>0$ such that
$\frac{1}{N^{2\xi-1}} \log Z_{N}^{\omega}\left(M_N^*>AN^{\xi}\right) <  - 1$ with $\bbP$-probability larger than $1-\gep$.
Therefore, thanks to Lemma~\ref{lem:R2step1} and because $\cW_{2}^{A}\geq 0$ (by taking $u=0=v$), one can choose $A$ such that
the second term in \eqref{E301} is negligible compared to the first one in $\hat \bbP$-probability.

\begin{proof}[Proof of Lemma~\ref{lem:R2step2}]
Let us write
\begin{equation*}
 Z_{N}^{\omega,>}  :=Z_{N}^{\omega} \left(M_N^*>AN^{\xi}\right)= \sum\limits_{k=1}^{\infty} Z_{N}^{\omega}\left(M_N^*\in(2^{k-1}AN^{\xi},2^{k}AN^{\xi}]\right) \,,
\end{equation*}
so that 
\[
 Z_{N}^{\omega,>}  \leq  \sum\limits_{k=1}^{\infty} \exp\left( \hat \gb N^{-\gamma} \Sigma_{2^k A N^\xi}^* + |\hat{h}| 2^{k+1} A N^{\xi-\zeta}\right) \bP \big( M_N^* \geq 2^{k-1} A N^{\xi} \big) \, .
\]
Using that $\mathbf{P}\left(M_N^*\geq2^{k-1}AN^{\xi}\right)\leq 2 \exp\left(-2^{2k-3}A^{2}N^{2\xi-1}\right)$,
we get by subadditivity
\begin{align*}
 \bbP  \left( Z_{N}^{\omega,>} \geq e^{-N^{2\xi-1}}\right) &\leq \sum\limits_{k=1}^{\infty} \bbP \Big( 2 \exp\left( \hat \gb N^{-\gamma} \Sigma_{2^k A N^\xi}^* -2^{2k-4}A^{2}N^{2\xi-1} \right) \geq  \frac{1}{2^{k+1}} e^{- N^{2\xi-1}} \Big)\,,
\end{align*}
where we have used the fact that $2^{k+1}|\hat{h}| A N^{\xi-\zeta}\leq 2^{2k-4}A^2 N^{2\xi-1}$ for large enough $A$, uniformly in $k,N\geq 1$ (using that we have $\xi-\zeta<2\xi-1$ in Region $R_2$).
Therefore, provided that~$A$ is sufficiently large, recalling that $\gamma +2\xi-1 = \xi/\alpha$, we end up with
\[
\bbP  \left( Z_{N}^{\omega,>} \geq e^{-N^{2\xi-1}}\right)  \leq  \sum\limits_{k=1}^{\infty} \bbP \left( \hat\gb \Sigma_{2^k A N^\xi}^* \geq  2^{2k-5} A^{2} N^{\xi/\alpha} \right)
\leq \sum\limits_{k=1}^{\infty} c \, \hat \gb^{\ga} 2^{(1-2\ga)k} A^{1-2\ga} \,,
\]
where we have used Lemma \ref{lem1} for the last inequality.
Since $\alpha>1/2$, this concludes the proof of Lemma~\ref{lem:R2step2}.
\end{proof}

\begin{remark}
\label{rem:almostsure}
If we want to upgrade our convergence in $\hat \bbP$-probability to a $\hat \bbP$-a.s.\ convergence, we would need to upgrade Lemma~\ref{lem:R2step2} to the following:
\[
\lim_{A\to\infty} \hat \bbP\Big(  \limsup_{N\to\infty} \frac{1}{N^{2\xi-1}} \log Z_{N}^{\omega}\left(M_N^*>AN^{\xi}\right)  \geq  - 1  \Big)  =0\,.
\]
With the same proof as above, we would need to bound

\[
\lim_{N_0\to\infty} \bbP  \left( \sup_{N\geq N_0} ( Z_{N}^{\omega,>}   e^{N^{2\xi-1}}) \geq 1 \right)  \leq   \lim_{N_0\to\infty} \sum\limits_{k=1}^{\infty} \bbP \left(   \sup_{N\geq N_0} (N^{-\xi/\alpha} \Sigma_{2^k A N^\xi}^*) \geq  c_{\hat \gb} 2^{2k} A^{2} \right) \,.
\] 
The proof would be complete if one could exchange the limit and the sum since we have $$\limsup_{N\to\infty} N^{-\xi/\alpha} \Sigma_{2^k A N^\xi}^* \leq 3 X_{2^kA}^* \quad \text{with} \quad X_t^* = \sup_{0\leq s\leq t} |X_s^{(1)}| + \sup_{0\leq s\leq t} |X_s^{(2)}|)$$ the continuous process analogous to $\Sigma^*$, see e.g.\ Lemma~~2.23 in~\cite[Ch.~VI]{JS03}.
But one needs to apply dominated convergence, that is control the tail of $\sup_{N\geq N_0} (N^{-\xi/\alpha} \Sigma_{2^k A N^\xi}^*)$; for that one would need a better control of the convergence in the coupling. 
Indeed, a sufficient condition to obtain the $\hat \bbP$-a.s.\ convergence is the following: there is some $\gep<2-\frac1\alpha$ such that $\hat \bbP$-a.s.\ there exists a constant $C(\omega)$ such that
\[
 \sup_{N\geq 1} N^{-\xi/\alpha}  \Sigma^*_{t N^{\xi}} \leq C(\omega) t^{\gep} X_t^*\,,\quad\text{uniformly in } t\geq 1. 
\]
The important part in this condition is that the constant $C(\omega)$ is uniform on all scales $t \in [2^kA, 2^{k+1}A]$.
For instance, this condition is verified if the coupling is \emph{exact}, that is if the $\go_i$'s are $\alpha$-stable, in which case we can set $\hat \go_i^{(N)} = N^{\xi/\alpha} (X_{(i+1)N^{-\xi}}^{(1)} -X_{iN^{-\xi}}^{(1)})$ for $i\geq 0$ (and analogously for $i< 0$).
\end{remark}

\paragraph*{Step 3.}
Let us note that, by monotonicity in $A$, we have that $\cW_{2} = \lim_{A\uparrow\infty} \cW_{2}^A$
is well-defined (possibly infinite) and non-negative.
We prove the following lemma:
\begin{lemma}
\label{lem:R2step3}
If $\ga\in (\frac12,2]$, we have that $ \mathcal{W}_{2}:=\sup_{u,v\geq 0} \{\hat{\beta}(X_{v}^{(1)} + X_{u}^{(2)})-I(u,v)\}$
is $\hat \bbP$-a.s.\ positive and finite.
\end{lemma}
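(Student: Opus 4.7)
The plan is to handle positivity and finiteness separately, since taking $u=v=0$ already gives $\cW_{R_2} \geq 0$, and the geometry of $I$ versus the fluctuations of $X$ yield finiteness exactly because $\alpha > 1/2$.

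\textbf{Finiteness.} I will first use the elementary bound
\[
I(u,v) \;=\; \tfrac12\big(|u|\wedge|v| + |u| + |v|\big)^2 \;\geq\; \tfrac12\big(u^2 + v^2\big) \qquad (u \leq 0 \leq v),
\]
so that $\cW_{R_2} \leq \sup_{v \geq 0}\{\hat\beta X_v - v^2/2\} + \sup_{s \geq 0}\{-\hat\beta X_{-s} - s^2/2\}$. By Definition~\ref{defL}, $(X_v)_{v\geq 0}$ and $(X_{-s})_{s\geq 0}$ are independent (two-sided Brownian motion or stable L\'evy process), so it suffices to show $\sup_{t\geq 0}\{\hat\beta Y_t - t^2/2\}<\infty$ a.s.\ for such a one-sided process $Y$. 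For this I will apply Borel--Cantelli to the dyadic events $E_k := \{\sup_{t\in[2^k,2^{k+1}]}\hat\beta Y_t > 2^{2k}/2\}$. Self-similarity ($Y_t\stackrel{d}{=}t^{1/\alpha}Y_1$, or Brownian scaling if $\alpha=2$) together with the standard tail $\bbP(\sup_{s\leq 1}Y_s > y)\leq c\,y^{-\alpha}$ for large $y$ will yield $\bbP(E_k) \leq C\, 2^{(k+1)(1-2\alpha)+1}$, whose sum over $k$ is finite exactly when $\alpha > 1/2$. Combined with $\sup_{t\in[0,1]} Y_t<\infty$ a.s., this closes the finiteness direction; the same argument (applied to $-Y$) covers the other side of the sum.

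\textbf{Positivity.} I will rely on Blumenthal's $0$--$1$ law: the event $\{\cW_{R_2}>0\}$ lies in the germ $\sigma$-algebra $\bigcap_{\eta>0}\sigma(X_t: |t|\leq \eta)$ because for any $\eta>0$ one can already achieve $\cW_{R_2}>0$ via some $(u,v)$ with $|u|, |v| < \eta$. It therefore has probability $0$ or $1$, and I only need to show it is positive. Taking $u=0$, $v=1/n$, the inequality $\hat\beta X_{1/n} - (1/n)^2/2 > 0$ is, by scaling, equidistributed with $\{\hat\beta X_1 > n^{1/\alpha-2}/2\}$. Since $\alpha>1/2$ gives $1/\alpha-2<0$, the threshold tends to $0$ and the probability converges to $\bbP(\hat\beta X_1>0)$, which is strictly positive: it equals $1/2$ in the Brownian case, and in the L\'evy case it is positive because Assumption~\ref{assump1} gives $p>0$, so the positive-jump part of $X$ is non-degenerate. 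A reverse Fatou argument gives $\bbP(\limsup_n\{\hat\beta X_{1/n} > 1/(2n^2)\})>0$, and the $0$--$1$ law upgrades this to $\bbP(\cW_{R_2}>0)=1$.

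The main obstacle will be the tail estimate $\bbP(\sup_{s\leq 1}Y_s > y)\leq c\,y^{-\alpha}$ for the running maximum of a one-sided $\alpha$-stable L\'evy process. For $\alpha=2$ it is immediate from Gaussian tails; in the stable case I will deduce it from the tail $\bbP(Y_1>y)\sim p\,y^{-\alpha}$ (Assumption~\ref{assump1}) via a reflection/maximal argument for L\'evy processes. It is precisely the summability threshold $2\alpha-1>0$ arising here that matches the condition $\alpha>1/2$ characterising region $R_2$, consistently with the heuristic discussion explaining why $R_2$ disappears when $\alpha\leq 1/2$.
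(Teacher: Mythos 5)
Your argument is correct in substance but follows a genuinely different route from the paper. For finiteness, the paper uses the same reduction via $I(u,v)\geq \tfrac12(u^2+v^2)$ but then simply invokes Pruitt's almost-sure growth bound ($X_v\leq v^{(1+\epsilon)/\alpha}$ for $v$ large, with $(1+\epsilon)/\alpha<2$), whereas you re-derive the needed control from scratch via a dyadic Borel--Cantelli argument using self-similarity and the tail bound $\bbP(\sup_{s\leq 1}Y_s>y)\leq c\,y^{-\alpha}$; your route is more self-contained (and mirrors the discrete Lemma~\ref{lem1} already proved in the appendix) at the cost of having to establish that maximal tail bound, while the paper's is a one-line citation. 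For positivity, the paper cites a lower-envelope result giving a sequence $v_n\downarrow 0$ with $X_{v_n}\geq v_n^{1/\alpha}$, so that $\hat\beta v_n^{1/\alpha}-\tfrac12 v_n^2>0$ for small $v_n$ since $\alpha>\tfrac12$; you instead combine the scaling identity $\bbP(\hat\beta X_{1/n}>\tfrac12 n^{-2})=\bbP(\hat\beta X_1>\tfrac12 n^{1/\alpha-2})\to\bbP(\hat\beta X_1>0)>0$ with reverse Fatou and Blumenthal's $0$--$1$ law. Both proofs isolate the threshold $\alpha>1/2$ in the same place.

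One point needs fixing in your positivity step: the event $\{\cW_{R_2}>0\}$ is \emph{not} obviously in the germ $\sigma$-algebra at $0$ (a large excursion of $X$ far from the origin could make $\cW_{R_2}$ positive without any local information), and the justification you give for its germ-measurability is precisely the statement you are trying to prove. The repair is immediate and already implicit in your argument: apply the $0$--$1$ law to the event $L:=\limsup_n\{\hat\beta X_{1/n}>\tfrac12 n^{-2}\}=\bigcap_N\bigcup_{n\geq N}\{\cdots\}$, which \emph{is} a germ event since $\bigcup_{n\geq N}\{\cdots\}\in\sigma(X_t:0\leq t\leq 1/N)$. Reverse Fatou gives $\bbP(L)>0$, hence $\bbP(L)=1$, and $L\subseteq\{\cW_{R_2}>0\}$ by taking $u=0$, $v=1/n$. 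With that adjustment the proof is complete.
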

Combined with Lemmas~\ref{lem:R2step1}-\ref{lem:R2step2},
this readily proves that $N^{-(2\xi-1)} \log Z_N^{\go}$ converges almost surely to $\cW_{2}$ as $N\to\infty$.

\begin{proof}
To show that $\cW_{2}>0$ almost surely, notice that taking $u=0$
we have
\[
\cW _{2} \geq \sup_{v\geq 0} \big\{\hat{\beta} X_{v}^{(1)} - \tfrac12 v^2 \big\} \, .
\]
Then, almost surely, we can find some sequence $v_n \downarrow 0$ such that $X_{v_n}^{(1)} \geq v_n^{1/\ga} $ for all $n$ (see e.g.~\cite[Th.~2.1]{Levyproc}):
we  get that $\cW_{2} \geq \sup_{n\geq 0} \{ \hat \gb v_n^{1/\ga} - \frac12 v_n^2\} >0$ since $\ga > 1/2$.

To show that $\cW_{2}<+\infty$ a.s., notice that $I(u,v) = \frac12( u\wedge v + u+v  )^2 \geq \frac12 (u^2 + v^2)$:
we therefore get that 
\[
 \mathcal{W}_{2} \leq \sup_{u\geq 0} \big\{\hat{\beta} X_{u}^{(2)} - \tfrac12 u^2 \big\}
+\sup_{v\geq 0} \big\{\hat{\beta} X_{v}^{(1)} - \tfrac12 v^2 \big\} \, .
\]
Let us consider the second term and show that it is a.s.\ finite (the first term is identical). 
For any $\epsilon >0$, a.s.\ $X_v^{(1)} \le v^{(1+\epsilon)/\ga}$ for $v$ large enough, see e.g.~\cite[Sec.~3]{pruitt1981}. 
Hence $\hat{\beta} X_{v}^{(1)} - \tfrac12 v^2 \le \hat{\beta} v^{(1+\epsilon)/\ga} - \tfrac12 v^2 \le 0$ for all $v$ large enough, provided that $(1+\epsilon)/\ga<2$,
which concludes the proof. 
\end{proof}

\subsubsection*{Convergence of $(M_N^-,M_N^+)$}

Let us define, for $\gep,\gep' \in (0,1)$ 
\[
\cU_2^{\gep,\gep'} = \Big\{ (u,v)\in (\bbR_+)^2  \colon  \sup_{(s,t) \in  B_{\gep}(u,v)}  \big\{\hat{\beta}(X_t^{(1)}  + X_s^{(2)})-I(s,t)\big\} \geq \cW_{2} -\gep'\Big\},
\]
where $ B_{\gep}(u,v)$ is the closed ball of center $(u,v)$ and of radius $\gep>0$. 
Let us observe that $\cU_2^{\gep,\gep'}$ is a.s.\ bounded: we know that a.s.\ the supremum outside a compact set $[-A(\go),0]\times [0,A(\go)]$ is smaller than $-1\leq \cW_{2}-\gep'$, see Lemma \ref{lem:R2step3}. 
Moreover, by Lemma~\ref{lem:R2step3} we can choose~$\epsilon'$ such that $\cW_2-\epsilon'>0$.

We now prove that for any $\gep,\, \gep' \in (0,1)$, $\lim_{N\to\infty} \bP_{N}^{\go} ( \frac{1}{N^{\xi}} (-M_N^-,M_N^+) \in \cU_2^{\gep,\gep'} ) =1 $ {  in $\hat \bbP$-probability}.
To simplify the notation, we denote the event $\{\frac{1}{N^\xi}(-M_N^-,M_N^+)\notin\cU_2^{\gep,\gep'}\}$ by $\cA_{N,2}^{\gep,\gep'}$. We have
\begin{equation*}
\log\bP_N^\omega(\cA_{N,2}^{\gep,\gep'})=\log Z_N^\omega(\cA_{N,2}^{\gep,\gep'})-\log Z_N^\omega \, .
\end{equation*}
From what we showed above, we have that $N^{-(2\xi-1)}\log Z_N^\omega$ converges  in $\hat\bbP$-probability to~$\cW_{2}$, so the proof will be complete if we show that $N^{-(2\xi-1)}\log Z_N^\omega(\cA_{N,2}^{\gep,\gep'}) < \cW_{2}$   with $\hat \bbP$-probability close to $1$.
Thanks to Lemma \ref{lem:R2step2}
 we only need to estimate $Z_N^\omega(M_N^*\leq AN^\xi;\cA_{N,2}^{\gep,\gep'})$. For any $\delta>0$, we perform a similar decomposition as in \eqref{E302} to get
\begin{equation*}\label{joint1}
Z_N^{\omega,\leq}(\cA_{N,2}^{\gep,\gep'}):=Z_{N}^{\omega}\big(M_N^*\leq AN^{\xi};\cA_{N,2}^{\epsilon,\epsilon'}\big) =\sum\limits_{k_{1}=0}^{\lfloor A/\gd \rfloor}\sum\limits_{k_{2}=0}^{\lfloor A/\gd \rfloor} Z_{N}^{\omega}(k_{1},k_{2},\delta;\cA_{N,2}^{\gep,\gep'}),
\end{equation*}
where we defined 
$Z_{N}^{\omega}(k_{1},k_{2},\delta;\cA_{N,2}^{\gep,\gep'})$
as 
\begin{equation*}\label{joint2}
Z_{N}^{\omega}\left(M_N^-\in( -(k_{1}+1)\delta N^{\xi},-k_{1}\delta N^{\xi}],M_N^+\in [k_{2}\delta N^{\xi},(k_{2}+1)\delta N^{\xi});\cA_{N,2}^{\epsilon,\epsilon'}\right).
\end{equation*}
By definition of $\cA_{N,2}^{\gep,\gep'}$, we get that
\begin{equation*}
\label{ZAR2}
Z_N^{\omega,\leq}(\cA_{N,2}^{\gep,\gep'}) 
\leq\left(\frac{A}{\delta}\right)^2\max\limits_{(k_1,k_2)\in U_{\delta,2}^{\gep,\gep'}} Z_N^\omega(k_1,k_2,\delta),
\end{equation*}
where
$U_{\delta,2}^{\gep,\gep'}:=\big\{ (k_1,k_2): k_1\delta,k_2\delta\in U_\delta, [k_1\delta,(k_1+1)\delta)\times[k_2\delta, (k_2+1)\delta) \not\subset  \cU_2^{\gep,\gep'} \big\}$, 
with $U_\delta=\{0,\delta,2\delta,\ldots,A\}$; by convention the maximum is $0$ if $U_{\delta,2}^{\gep,\gep'}$ is empty.

Now, by the same argument as in \textbf{Step 1}, we have that
\begin{equation*}
\label{limsupR2}
\limsup\limits_{\delta\to0}\limsup\limits_{N\to\infty}\frac{1}{N^{2\xi-1}}\log Z_N^{\omega,\leq}(\cA_{N,2}^{\gep,\gep'})\leq\sup\limits_{(u,v)\notin \cU_2^{\gep,\gep'}}\left\{\hat{\beta}(X_v^{(1)} + X_u^{(2)})-I(u,v)\right\}\leq\cW_{2}-\epsilon'\, ,
\end{equation*}
by definition of  $\cU_2^{\gep,\gep'}$.
This concludes the proof that $\lim_{N\to\infty}\bP_{N}^{\hat\go} (\cA_{N,2}^{\gep,\gep'}) = 0$ $\hat\bbP$-a.s.

 Let us now observe that
thanks to Proposition~\ref{uniquemaximizer}, the maximiser $(\cU^{(2)}, \cV^{(2)})$ of $\cW_2$ is $\hat\bbP$-a.s.\ unique: 
hence,   $\bigcap_{\gep'>0}\cU_2^{\gep,\gep'}\subset B_{4\epsilon}(\cU^{(2)}, \cV^{(2)})$.
Therefore, for any $\gep>0$
there is a.s.\ some $\gep'>0$ such that $\cU_2^{\gep,\gep'}$ is included in $B_{8\epsilon}(\cU^{(2)}, \cV^{(2)})$, which concludes the proof.
\qed

\subsection{Region R3: proof of Theorem \ref{C103}}
 Recall that in 
Region~$R_3$ we have
\[
\gamma<\zeta-\tfrac{\ga-1}{\ga}\quad \text{and} \quad \gamma< \tfrac{1-\ga}{\ga} \, ,
\qquad \text{with } \ga \in (0,1)\cup(1,2] \,.
\] 
We prove that the range size is of order $N$.

\subsubsection*{Convergence of the rescaled log-partition function}

First of all, notice that we can reduce to the case $h_N \equiv 0$.
Indeed, we have the bounds
\[
 Z_{N,\gb_N,h_N=0}^{\omega} \times e^{-|h_N| N}  \leq Z_{N,\gb_N,h_N}^{\omega} \leq Z_{N,\gb_N,h_N=0}^{\omega} \times e^{|h_N| N} \, .
\]
Since $h_N = \hat h N^{-\zeta}$ with $\zeta> \gamma + \frac{\ga-1}{\ga}$, we have that $\lim_{N\to\infty }N^{-(\frac{1}{\ga}-\gamma)}|h_N| N = 0$.
In the following, we therefore focus on the convergence of $N^{-(\frac{1}{\ga}-\gamma)}\log Z_{N,\gb_N,h_N=0}^{\omega}$.
We write for simplicity $Z_{N}^{\omega}$ for $Z_{N,\gb_N,h_N=0}^{\omega}$.

For any $\delta>0$, we can write
\begin{equation*}
Z_{N}^{\omega}=  \sum_{k_1=0}^{ \lfloor 1/\gd \rfloor} \sum_{k_2=0}^{\lfloor 1/\gd \rfloor}
Z_{N}^{\omega}(k_{1},k_{2},\delta),
\end{equation*}
with $Z_N^\go(k_1,k_2,\delta)$ as in \eqref{E303} with $\xi=1$.
Since there are at most~$N$ steps for the random walk, we can have $M_N^- \leq -k_1 \gd N$ and $M_N^+ \geq k_2 \gd N$ only if $\gd ( k_1\wedge k_2 + k_1+k_2 ) \leq 1 $.
Hence, writing $u= k_{1}\delta$, $v= k_{2}\delta$, and $U_{\gd} = \{0,\gd, 2\gd,\ldots, 1\}$, we have
\begin{equation*}
\max_{u, v \in U_{\gd} \atop  u\wedge v +u+v\leq 1 }\log Z_{N}^{\omega}(\tfrac{u}{\gd}, \tfrac{v}{\gd}, \gd)\leq\log Z_{N}^{\omega}\leq-2\log\delta+\max_{u, v \in U_{\gd} \atop  u\wedge v +u+v\leq 1 } \log Z_{N}^{\omega}(\tfrac{u}{\gd}, \tfrac{v}{\gd}, \gd).
\end{equation*}

\smallskip
\noindent
{\it Upper bound.}
We have
\begin{equation*}\label{E404}
\max_{u, v \in U_{\gd} \atop  u\wedge v +u+v\leq 1 }  \!\!  N^{\gamma-\frac{1}{\alpha}} \log Z_{N}^{\omega}(\tfrac{u}{\gd}, \tfrac{v}{\gd}, \gd)  \leq   \max_{u, v \in U_{\gd} \atop  u\wedge v +u+v\leq 1 }    \hat{\beta}\left( N^{-\frac{1}{\ga}} \Sigma_{\lfloor uN \rfloor}^{-} 
  +  N^{-\frac{1}{\ga}}  \Sigma_{\lfloor vN \rfloor}^{+} + N^{-\frac{1}{\ga}} R_{N}^{\gd}(u,v)\right)
  \, ,
\end{equation*}
where $R_{N}^{\gd}(u,v)$ is as in \eqref{E306} with $\xi=1$.
As in the previous section, we get that $\hat \bbP$-a.s.\ the $\limsup$ of the right-hand side is bounded above by
\begin{equation*}
\what\cW_{3}^{\gd} :=\max_{u, v \in U_{\gd} \atop  u\wedge v +u+v\leq 1 } \Big\{\hat{\beta}(X_{u}^{(2)}+X_{v}^{(1)})+\hat \gb \sup_{0\leq t\leq\delta}|X_{u+t}^{(2)}-X_{u}^{(2)}|+\hat \gb \sup_{0\leq t\leq\delta}|X_{v+t}^{(1)}-X_{v}^{(1)})| \Big\}.
\end{equation*}

\smallskip
\noindent
{\it Lower bound.}
We have
\begin{multline*}
\max_{u, v \in U_{\gd} \atop  u\wedge v +u+v\leq 1 }  N^{\gamma-\frac{1}{\ga}} \log Z_{N}^{\omega}(\tfrac{u}{\gd}, \tfrac{v}{\gd}, \gd)  \\
\geq   \max_{u, v \in U_{\gd} \atop  u\wedge v +u+v\leq 1 }  \Big\{ \hat{\beta}\left(  N^{-\frac{1}{\ga}}\Sigma_{\lfloor u N \rfloor}^{-} 
  + N^{-\frac{1}{\ga}}  \Sigma_{\lfloor v N \rfloor}^{+} \right) 
   - N^{-\frac{1}{\ga}} R_{N}^{\gd}(u,v)  - N^{1+\gamma-\frac{1}{\ga}} \log 2\Big\}
  \, ,
\end{multline*}
where we used that any non-empty event of $(S_n)_{0\leq n \leq N}$
has probability at least $2^{-N}$.
Now, since $\gamma<\zeta+\frac{1}{\alpha}-1$ and $\gamma<\frac{1}{\alpha}-1$, the last
two terms in the maximum go to $0$: 
we therefore get that $\hat \bbP$-a.s.\ the $\liminf$  of the right-hand side is bounded below by
\begin{equation*}
\wcheck\cW_{3}^{\gd} :=\max_{u, v \in U_{\gd} \atop  u\wedge v +u+v\leq 1 } \Big\{\hat{\beta}(X_{u}^{(2)}+X_{v}^{(1)})- \hat \gb \sup_{0\leq t\leq\delta}|X_{u+t}^{(2)}-X_{u}^{(2)}| - \hat \gb \sup_{0\leq t\leq\delta}|X_{v+t}^{(1)}-X_{v}^{(1)}| \Big\}.
\end{equation*}

\smallskip
\noindent
{\it Conclusion.}
We can conclude 
in the same manner as in the proof of Lemma~\ref{lem:R2step1}:
letting $N\to\infty$ and then $\gd\downarrow 0$, we get that
$N^{\gamma-\frac1\ga} \log Z_N^{\go}$ converges $\hat \bbP$-a.s.\ to
\[
\lim_{\gd\downarrow 0} \what\cW_{3}^{\gd}= \lim_{\gd\downarrow 0 } \wcheck\cW_{3}^{\gd}  = \sup_{u,v\in [0,1] \atop u\wedge v +u+v\leq 1} \Big\{\hat{\beta}(X_{u}^{(2)}+X_{v}^{(1)})\Big\} \, ,
\]
where the limit holds thanks to the a.s.\ c\`adl\`ag property of trajectories of the L\'evy process (or continuity in the Brownian motion case).
This is exactly the variational problem~$\cW_{3}$ defined in Theorem~\ref{C103}.
Together with the (trivial) fact that 
$\cW_{3} \in (0,+\infty)$ a.s., this concludes the proof of~\eqref{E120}.
\qed

\subsubsection*{Convergence of $(M_N^-,M_N^+)$}

The proof follows the same strategy as in Region $R_2$, so we only give a sketch. Let us define the counterpart of $\cU_2^{\epsilon,\epsilon'}$ in Region $R_3$ by
\[
\cU_{3}^{\gep,\gep'} = \Big\{ (u,v)\in  (\bbR_+)^2  \, :\, u\wedge v+u+v\leq1, \sup_{s,t\geq 0  , (s,t) \in  B_{\gep}(u,v)\atop s\wedge t+s+t\leq 1}  \{\hat{\beta}(X_t^{(1)} + X_s^{(2)})\} \geq \cW_{3} -\gep'\Big\}.
\]
Then we denote the event $\{\frac{1}{N}(-M_N^-,M_N^+)\notin\cU_{3}^{\gep,\gep'}\}$ by $\cA_{N,3}^{\gep,\gep'}$. By the same procedure as in Region $R_2$ {  (here we can use $\hat\bbP$-a.s.\ convergences, since we do not have to restrict trajectories)}, we can first show that $\hat \bbP$-a.s.
\begin{equation*}
\limsup_{N\to\infty} \frac{1}{N^{\frac{1}{\alpha}-\gamma}}\log Z_{N}^{\go}(\cA_{N,3}^{\gep,\gep'})  < \cW_{3} \, \quad \text{ and so } \quad 
\limsup_{N\to\infty} \frac{1}{N^{\frac{1}{\alpha}-\gamma}}\log\bP_{N}^{\go}(\cA_{N,3}^{\gep,\gep'}) < 0 \,.
\end{equation*}
We then deduce, as done in Region $R_2$ that $\hat\bbP$-a.s.  
$
\lim_{N\to\infty}\bP_{N}^{\omega} ( \frac{1}{N}(-M_N^-,M_N^+)\in \cU_{3}^{\gep,\gep'})=1.
$
By uniqueness of the maximizer $(\cU^{(3)},\cV^{(3)})$ of $\cW_3$ (Propposition~\ref{uniquemaximizer}), we get that if $\epsilon'$ is small enough, then $\cU_{3}^{\gep,\gep'}$ is contained in $B_{8\epsilon}(\cU^{(3)},\cV^{(3)})$,
which completes the proof.
\qed

\subsection{Region R4: proof of Theorem \ref{C106}}

We prove that in Region $R_4$ the range size is of order $N^\xi$ with $\xi=\frac{\ga}{\ga - 1}(\zeta -\gamma) \in (0,1)$.
We take $\hat h>0$, and recall that in this region  we have 
\[
\big(\tfrac{(2\ga-1)\zeta-(\ga - 1)}{\ga} \big)\vee \big(\zeta-\tfrac{\ga-1}{\ga}\big)<\gamma<\big(\tfrac{(2\ga+1)\zeta-(\ga - 1)}{3\ga}\big) \wedge \zeta\,, \quad \text{with }\ga\in (1,2] \,,
\] 
and that $\xi-\zeta =\xi/\ga -\gamma> |2\xi-1|$.
Recall also that region~$R_4$ does not exist if $\ga<1$.

\subsubsection*{Convergence of the rescaled log-partition function} 
For any $A>0$, we first write
\begin{equation}\label{E350}
Z_{N}^{\omega}=Z_{N}^{\omega}\left(M_{N}^{*}\leq AN^{\xi}\right)+Z_{N}^{\omega}\left(M_{N}^{*}> AN^{\xi}\right).
\end{equation}
The strategy is similar to that in Region $R_2$ and we use analogous notation. We proceed in three steps: (1)~after taking logarithm and dividing by $N^{\xi-\zeta}$, we show that the first term converges to some limit $\cW_{4}^A$ when $N\to\infty$; (2)~we show that the second term in~\eqref{E350} is small compared to the first one; (3)~we show that $\cW_{4}^A\rightarrow\cW_{4}$ as $A\to\infty$, with $\cW_{4}\in  (0,+\infty)$ almost surely.

\paragraph*{Step 1.} 
We prove the following lemma.

\begin{lemma}\label{R4LM1} 
In Region $R_4$,  we have that $\hat\bbP$-a.s., for any $A\in \bbN$,
\begin{equation*}
\lim_{N\to\infty} \frac{1}{N^{\xi-\zeta} }\log Z_{N}^{\omega}\left(M_{N}^{*}\leq AN^{\xi}\right) =\cW_{4}^A:=\sup\limits_{u,v \in [0,A]}\left\{\hat{\beta}(X_{v}^{(1)} +X_{u}^{(2)})-\hat{h}(u+v)\right\}\,,
\end{equation*}
with $(X_{v}^{(1)}, X_u^{(2)})_{u,v\geq 0}$ from  Notation~\ref{defL}.
\end{lemma}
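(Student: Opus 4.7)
The plan is to follow the three-step scheme of the proof of Lemma~\ref{lem:R2step1} essentially verbatim, with the single key substitution that in Region~$R_4$ the leading balance is between the ``energy'' and the ``range'' terms, both at the scale $N^{\xi-\zeta}$, while the ``entropy'' contribution, of order $N^{|2\xi-1|}$, is strictly subleading since $|2\xi-1|<\xi-\zeta$ throughout Region~$R_4$.

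First, fix $\delta>0$ and perform the box decomposition
\[
Z_N^{\omega}(M_N^*\leq AN^\xi) \;=\; \sum_{k_1=0}^{\lfloor A/\delta\rfloor}\sum_{k_2=0}^{\lfloor A/\delta\rfloor} Z_N^{\omega}(k_1,k_2,\delta),
\]
with $Z_N^{\omega}(k_1,k_2,\delta)$ defined as in \eqref{E303} (now using the Region~$R_4$ exponent~$\xi$). As in Region~$R_2$, this sum coincides with the maximum of its $O((A/\delta)^2)$ summands up to an additive $O(\log(A/\delta))$, which is negligible after dividing by $N^{\xi-\zeta}$. For each $(k_1,k_2)$, I exploit the $d=1$ identity $\cR_N=[M_N^-,M_N^+]\cap\bbZ$, which gives $\sum_{x\in\cR_N}\omega_x=\Omega^-_{-M_N^-}+\Omega^+_{M_N^+}$ and $|\cR_N|\in[(k_1+k_2)\delta N^\xi,(k_1+k_2+2)\delta N^\xi]$. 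Writing $u=k_1\delta$ and $v=k_2\delta$, this yields exactly as in \eqref{E304} the upper bound
\[
\log Z_N^{\omega}(k_1,k_2,\delta) \;\leq\; \beta_N\bigl(\Omega^-_{\lfloor uN^\xi\rfloor}+\Omega^+_{\lfloor vN^\xi\rfloor}\bigr) + \beta_N R_N^\delta(u,v) - \hat h(u+v)N^{\xi-\zeta} + p_N^\delta(u,v),
\]
and a matching lower bound with the sign of $R_N^\delta$ reversed and $u+v$ replaced by $u+v+2\delta$. Dividing by $N^{\xi-\zeta}$ and using the Region~$R_4$ identity $\xi/\alpha-\gamma=\xi-\zeta$, the ``energy'' term becomes the converging quantity $\hat\beta\bigl(N^{-\xi/\alpha}\Omega^-_{\lfloor uN^\xi\rfloor}+N^{-\xi/\alpha}\Omega^+_{\lfloor vN^\xi\rfloor}\bigr)$, the ``range'' term becomes $-\hat h(u+v)$, and the remainder $N^{-(\xi-\zeta)}\bigl(\beta_N R_N^\delta+p_N^\delta\bigr)$ is $o(1)$ uniformly in $(u,v)\in U_\delta\times U_\delta$: the $R_N^\delta$ piece is controlled by Lemma~\ref{lem1} and is anyway killed in the limit by the càdlàg regularity of the limiting process, while the entropic piece is of order $N^{|2\xi-1|-(\xi-\zeta)}=o(1)$ by~\eqref{E106} and the strict inequality $|2\xi-1|<\xi-\zeta$ that defines Region~$R_4$.

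Finally, by Skorokhod's representation, the rescaled sums $(N^{-\xi/\alpha}\Omega^-_{\lfloor\,\cdot\,N^\xi\rfloor})_{\cdot\in[0,A]}$ and $(N^{-\xi/\alpha}\Omega^+_{\lfloor\,\cdot\,N^\xi\rfloor})_{\cdot\in[0,A]}$ converge $\bbP$-a.s.\ to two independent copies $X^{(1)},X^{(2)}$ of the limit process from Definition~\ref{defL}. Letting $N\to\infty$ produces $\delta$-dependent two-sided bounds given by the maximum over the finite grid $U_\delta\times U_\delta$; letting $\delta\downarrow 0$ then uses the a.s.\ càdlàg (or continuous) regularity of $X^{(1)},X^{(2)}$ both to kill the $\sup_{0\leq t\leq\delta}|X^{(i)}_{\cdot+t}-X^{(i)}_\cdot|$ fluctuation corrections and to replace the discrete supremum by its continuum counterpart. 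Gluing $X_t:=-X^{(1)}_{-t}$ for $t\leq 0$ and $X_t:=X^{(2)}_t$ for $t\geq 0$ then delivers precisely $\cW_{R_4}^A$. The only genuinely new point compared to the Region~$R_2$ argument—and the main (mild) obstacle—is the uniform control of the entropy contribution, which must be valid in both the sub-diffusive ($\xi<1/2$) and super-diffusive ($\xi>1/2$) sub-cases that coexist in Region~$R_4$; this is handled uniformly by the single defining inequality $|2\xi-1|<\xi-\zeta$.
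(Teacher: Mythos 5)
Your proposal follows the paper's proof essentially verbatim: the same box decomposition into $Z_N^{\omega}(k_1,k_2,\delta)$, the same two-sided bounds with energy, range, $R_N^\delta$ and $p_N^\delta$ terms, the same use of $\xi/\alpha-\gamma=\xi-\zeta$ together with $|2\xi-1|<\xi-\zeta$ (via Lemmas~\ref{lem:superdiffusive} and~\ref{lem:subdiffusive}) to kill the entropy term, and the same double limit $N\to\infty$ then $\delta\downarrow0$ using the c\`adl\`ag regularity of the limiting L\'evy processes. The only quibble is your passing claim that $N^{-(\xi-\zeta)}\beta_N R_N^\delta$ is $o(1)$ for fixed $\delta$ --- it actually converges to the modulus-of-continuity term $\sup_{0\leq t\leq\delta}|X^{(i)}_{\cdot+t}-X^{(i)}_\cdot|$, which vanishes only as $\delta\downarrow0$ --- but your final paragraph handles this correctly, so the argument stands.
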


\begin{proof}
For fixed $\delta>0$, we write (recall the notation~\eqref{E303})
\begin{equation*}
\label{splitR4}
Z_{N}^{\omega,\leq }:=Z_{N}^{\omega}\left(M_{N}^{*}\leq AN^{\xi}\right)=\sum\limits_{k_{1}=0}^{\lfloor A/\delta\rfloor}\sum\limits_{k_{2}=0}^{\lfloor A/\delta\rfloor}Z_{N}^{\omega}(k_{1},k_{2},\delta).
\end{equation*}
Since the number of summands above is finite, we can write
\begin{equation*}
\max_{0\leq k_1, k_2 \leq \frac{A}{\gd}}  \log  Z_{N}^{\omega}(k_{1},k_{2},\delta)  \leq  \log Z_{N}^{\go,\leq }  \leq   2\log (\tfrac{A}{\gd})  + \max_{0\leq k_1, k_2 \leq \frac{A}{\gd}}   \log  Z_{N}^{\omega}(k_{1},k_{2},\delta) \, .
\end{equation*}

\smallskip
\noindent
{\it Upper bound.} We write $u=k_{1}\delta, v=k_{2}\delta$ and set $U_{\gd}=\{0,\delta,2\gd,\ldots,A\}$. 
Recalling that $\xi -\zeta =\xi/\ga-\gamma$, we get that
\begin{multline*}
\label{E353}
\max_{0\leq k_1, k_2 \leq \frac{A}{\gd}} 
 \frac{ 1 }{N^{\xi-\zeta}} \log Z_{N}^{\omega} (k_1,k_2,\gd)
\\ \leq  
\max_{u,v \in U_{\gd}}  \Big\{   \hat\gb  N^{-\frac{\xi}{\ga}} \big(  \Sigma_{\lfloor u N^{\xi}\rfloor}^{-} 
+  \Sigma_{\lfloor v N^{\xi}\rfloor}^{+} \big)  + \hat\gb N^{-\frac{\xi}{\ga}} R_{N}^{\gd}(u,v) 
- \hat{h}(v+u) + N^{\xi-\zeta} p_N^{\gd}(u,v) \Big\} \,.
\end{multline*}
Then, notice that $\lim_{N\to\infty}N^{\xi-\zeta} p_{N}^{(\delta)}(u,v)=0$, thanks to Lemmas~\ref{lem:superdiffusive} and \ref{lem:subdiffusive}, since we have $ \xi-\zeta >|2\xi-1|$.
Therefore, similarly to the previous sections, $\hat\bbP$-a.s.\
the $\limsup$ of right-hand side is bounded above by
\begin{equation*}\label{E354}
\what\cW_{4}^{A,\gd}:=\max\limits_{u,v\in U_\gd}\Big\{\hat{\beta}\Big(X_{u}^{(2)}+X_{v}^{(1)}+\sup\limits_{t\in[0,\delta]}|X_{u+t}^{(2)}-X_{u}^{(2)}|+\sup\limits_{t\in[0,\delta]}|X_{v+t}^{(1)}-X_{v}^{(1)}|\Big)-\hat{h}(v+u)\Big\}.
\end{equation*}

\smallskip
\noindent
{\it Lower bound.} On the other hand, we bound $\log Z_{N}^{\omega}(k_{1},k_{2},\delta)$ from below by
\begin{equation*}
\gb_N \Big( \Sigma_{ \lfloor k_{1}\delta N^{\xi}\rfloor}^- + \Sigma_{ \lfloor k_{2}\delta N^{\xi}\rfloor}^+\Big) - \gb_N R_{N}^{\gd}(k_{1}\gd,k_{2}\gd)
- h_N(k_{2}+k_{1} +2)\delta N^{\xi}  - p_{N}^{\gd}(k_{1}\gd,k_{2}\gd). 
\end{equation*}
Thus, setting $u=k_{1}\delta, v=k_{2}\delta$ and  $U_{\gd}=\{0,\delta,\ldots,A\}$ as above, we obtain
\begin{multline*}\label{E355}
\max_{0\leq k_1, k_2 \leq \frac{A}{\gd}}  \frac{ 1 }{N^{\xi-\zeta}}  \log Z_{N}^{\omega} (k_1,k_2,\gd) \\
 \geq   
\max_{u,v \in U_{\gd}} \Big\{  \hat \gb   N^{-\frac{\xi}{\ga}}  \big(  \Sigma_{\lfloor u N^{\xi}\rfloor}^{-} +  \Sigma_{\lfloor u N^{\xi}\rfloor}^{+} \big) - \hat\gb N^{-\frac{\xi}{\ga}} R_{N}^{\gd}(u,v)
  - \hat h (u+v +2\gd)   - N^{\xi-\zeta} p_N^{\gd}(u,v) \Big\} \, .
\end{multline*}
Hence, similarly to what is done above, $\hat \bbP$-a.s.\ the $\liminf$ of the right-hand side is bounded below by
\begin{equation*}\label{E356}
\wcheck\cW_{4}^{A,\gd}:=\max\limits_{u,v\in U_\gd}\Big\{\hat{\beta}\Big(X_{u}^{(2)}+X_{v}^{(1)}-\sup\limits_{t\in[0,\delta]}|X_{u+t}^{(2)}-X_{u}^{(2)}|-\sup\limits_{t\in[0,\delta]}|X_{v+t}^{(1)}-X_{v}^{(1)}|\Big)-\hat{h}(v+u+2\delta)\Big\}.
\end{equation*}

\smallskip
\noindent
{\it Conclusion.} 
The terms $\what\cW_{4}^{A,\gd}$, $\wcheck\cW_{4}^{A,\gd}$
are almost sure upper and lower bound for the $\limsup$ and $\liminf$  of $N^{\zeta- \xi}\log Z_{N}^{\omega,\leq}$.
By the a.s.\ c\`ad-l\`ag property of trajectories of L\'evy processes (or continuity in the Brownian motion case), we have
\begin{equation*}
\lim\limits_{\delta\downarrow0}\what\cW_{4}^{A,\gd}=\lim\limits_{\delta\downarrow0}\wcheck\cW_{4}^{A,\gd}=\sup\limits_{u,v\in[0,A]}\left\{\hat{\beta}\left(X_{u}^{(2)}+X_{v}^{(1)}\right)-\hat{h}(v+u)\right\} ,
\end{equation*}
which is exactly $\cW_{4}^{A}$.
The  convergence in Lemma \ref{R4LM1} is  therefore achieved by letting $N\to\infty$ and then $\delta\to0$.
\end{proof}

\paragraph*{Step 2.}
Next, we prove the following lemma.
\begin{lemma}\label{R4LM2}
In region $R_4$, there is some $A_{0}>0$ and some constant $C=C_{\hat{\beta},\hat{h}}$, such that for all $A\geq A_{0}$ and any $N\geq 1$
\begin{equation*}
\mathbbm{P}\left(\frac{1}{N^{\xi-\zeta}}\log Z_{N}^{\omega}\big(M_{N}^{*}>AN^{\xi}\big)\geq -1\right)\leq CA^{1-\ga}\,.
\end{equation*}
\end{lemma}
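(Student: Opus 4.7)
The plan is to follow the structure of Lemma \ref{lem:R2step2}, but in region $R_4$ the role of the Gaussian/stable deviation of $M_N^*$ is taken over by the deterministic range penalty $-h_N|\cR_N|$, which is the dominant cost (at the scale $N^{\xi-\zeta}$) for trajectories reaching beyond $[-AN^\xi,AN^\xi]$. The key one-dimensional input is that the simple walk on $\bbZ$ satisfies $|\cR_N|=M_N^+-M_N^-+1\geq M_N^*+1$, so the range penalty is genuinely controlled by $M_N^*$.

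First I would decompose dyadically
\[
Z_N^{\omega,>} := Z_N^{\omega}\bigl(M_N^{*}>AN^{\xi}\bigr) = \sum_{k=1}^\infty Z_N^\omega\Bigl(M_N^*\in(2^{k-1}AN^\xi,2^kAN^\xi]\Bigr),
\]
and on each summand bound the disorder sum by $\beta_N\gO^*_{2^kAN^\xi}$ and the range term by $-\hat h\,2^{k-1}AN^{\xi-\zeta}$ (using the lower bound on $|\cR_N|$ above), which yields the deterministic estimate
\[
Z_N^\omega\Bigl(M_N^*\in(2^{k-1}AN^\xi,2^kAN^\xi]\Bigr) \leq \exp\Bigl(\hat\beta N^{-\gamma}\gO^*_{2^kAN^\xi}-\hat h\,2^{k-1}AN^{\xi-\zeta}\Bigr).
\]

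Next, by a union bound it is enough to show that with $\bbP$-probability at least $1-CA^{1-\alpha}$ every $k$-th summand is bounded by $2^{-k}e^{-N^{\xi-\zeta}}$. For $A\geq A_0$ sufficiently large, the deterministic term $\hat h\,2^{k-1}AN^{\xi-\zeta}$ absorbs both $N^{\xi-\zeta}$ and $k\log 2$, so it suffices to control
\[
\bbP\Bigl(\gO^*_{2^kAN^\xi}\geq \tfrac{\hat h}{4\hat\beta}\,2^kAN^{\xi-\zeta+\gamma}\Bigr).
\]
Invoking Lemma \ref{lem1}, this probability is at most $C_{\hat\beta,\hat h}\,(2^kAN^\xi)\cdot(2^kAN^{\xi-\zeta+\gamma})^{-\alpha}$. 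Using the defining identity of region $R_4$, namely $\xi-\zeta=\xi/\alpha-\gamma$, we have $\alpha(\xi-\zeta+\gamma)=\xi$, so the powers of $N$ cancel exactly and we are left with $C_{\hat\beta,\hat h}\,2^{k(1-\alpha)}A^{1-\alpha}$. Summing over $k\geq 1$ then produces the claimed bound $CA^{1-\alpha}$.

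The only real subtlety is this final scale-matching: one must exploit precisely the balance condition $\xi-\zeta=\xi/\alpha-\gamma$ defining $R_4$ for the $N$-dependence to cancel after Lemma \ref{lem1}, and one crucially needs $\alpha>1$ (which holds throughout $R_4$) both for the geometric series in $k$ to converge and for $\gO^*$ to be a well-behaved tail quantity. Both conditions are built into the very definition of $R_4$, which explains why the conclusion takes exactly the form $CA^{1-\alpha}$ and why the same scheme would fail outside $R_4$.
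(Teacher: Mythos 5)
Your proof is correct and follows essentially the same route as the paper's: the same dyadic decomposition of $\{M_N^*>AN^{\xi}\}$, the same pointwise bound $\exp(\hat\beta N^{-\gamma}\gO^*_{2^kAN^{\xi}}-\hat h\,2^{k-1}AN^{\xi-\zeta})$ on each summand, the same union bound reducing to tail estimates on $\gO^*$ via Lemma~\ref{lem1}, and the same cancellation of the $N$-powers through the identity $\xi-\zeta=\xi/\alpha-\gamma$, with the geometric series in $k$ converging because $\alpha>1$. Your framing observation — that the range penalty here plays the role the entropic cost $\bP(M_N^*\geq 2^{k-1}AN^{\xi})$ played in Lemma~\ref{lem:R2step2} — accurately captures the one structural difference from the Region $R_2$ argument.
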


{ 
\noindent
Since $\ga>1$ in region $R_4$, this proves that for any $\gep>0$ we can choose $A$ large enough so that $\frac{1}{N^{\xi-\zeta}}\log Z_{N}^{\omega}\big(M_{N}^{*}>AN^{\xi}\big) <-1$ with $\bbP$ probability larger than $1-\gep$.
Hence, thanks to Lemma~\ref{R4LM1} and the fact that $\cW_{4}^{A}\geq 0$, the second term in \eqref{E350} is negligible compared to the first one in $\hat\bbP$-probability.
Note that here again, we are not able to upgrade the convergence to a $\hat \bbP$-a.s.\ convergence, see Remark~\ref{rem:almostsure} which also applies here.
}

\begin{proof}
Let us write $Z_{N}^{\omega,>}:=Z_{N}^{\omega}\left(M_N^*>AN^{\xi}\right)$ so
\begin{align*}
Z_{N}^{\omega,>}=\sum\limits_{k=1}^{\infty} Z_{N}^{\omega}\left(M_N^*\in(2^{k-1}AN^{\xi},2^{k}AN^{\xi}]\right) 
 & \leq \sum_{k=1}^{\infty} \exp\Big(\hat \gb N^{-\gamma} \Sigma^*_{2^k A N^{\xi}} - \hat h 2^{k-1} A N^{\xi-\zeta}  \Big)\, .
\end{align*}
By subadditivity, we therefore get that
\begin{align*}
\bbP  \left( Z_{N}^{\omega,>} \geq e^{-N^{\xi-\zeta}}\right)
&\leq \sum\limits_{k=1}^{\infty} \bbP \left(   e^{\hat \gb N^{-\gamma} \Sigma^*_{2^k A N^{\xi}}  - \hat h   2^{k-1} A N^{\xi-\zeta} }  \geq  \frac{1}{2^{k+1}} e^{- N^{\xi-\zeta}} \right) \\
& \leq \sum\limits_{k=1}^{\infty} \bbP \left( \hat \gb N^{-\gamma} \Sigma^*_{2^k A N^{\xi}}  \geq  \hat h  2^{k-2} A N^{ \frac{\xi}{\alpha} -\gamma}   \right) \, ,
\end{align*}
where the last inequality holds provided that $A$ has been fixed large enough (we also used that $\xi-\zeta = \frac{\xi}{\ga} -\gamma$).
Then Lemma \ref{lem1} gives that each probability in the sum 
is bounded by a constant times $2^{k(1-\ga)} A^{1-\ga}$.
Since $\ga>1$, summing this  over $k$  gives the conclusion of the proof of Lemma~\ref{R4LM2}.
\end{proof}

\paragraph*{Step 3.} By monotone convergence, $\cW_{4}^A$ converges a.s.\ to $\cW_{4}$: we only need to show that~$\cW_{4}$ is positive and finite.  Combining this with Lemmas \ref{R4LM1} and \ref{R4LM2}, this completes the proof of~\eqref{E123}.

\begin{lemma}\label{R4LM3}
If $\ga\in (1,2]$, we have that $\cW_{4}:= \sup_ {u,v\geq0}\{\hat{\beta}(X_{v}^{(1)} + X_{u}^{(2)})-\hat{h}(u+v)\}$ is $\hat \bbP$-a.s.\ positive and finite.
\end{lemma}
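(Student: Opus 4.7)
The approach mirrors that of Lemma~\ref{lem:R2step3}: the only structural change compared to region~$R_2$ is that the quadratic entropy penalty $I(u,v)$ has been replaced by the linear range penalty $\hat{h}(v-u)$. The key identity I will exploit is
\[
\hat{\beta}(X_{v}-X_{u}) - \hat{h}(v-u) = \big(\hat{\beta}X_{v} - \hat{h}v\big) - \big(\hat{\beta}X_{u} - \hat{h}u\big),
\]
which decouples the roles of $u\leq 0$ and $v\geq 0$, so that
\[
\cW_{R_4} \leq \sup_{v\geq 0}\big\{\hat{\beta}X_{v}-\hat{h}v\big\} + \sup_{u\leq 0}\big\{\hat{h}u - \hat{\beta}X_{u}\big\},
\]
and the two suprema on the right are independent (they involve $X^{(2)}$ and $X^{(1)}$ respectively, in the notation of Lemma~\ref{lem:R2step1}).

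For positivity, I would restrict to $u=0$ to obtain $\cW_{R_4}\geq \sup_{v\geq 0}\{\hat{\beta}X_{v}-\hat{h}v\}$ and then use the short-time bound already invoked in Lemma~\ref{lem:R2step3}: by \cite[Th.~2.1]{Levyproc}, a.s.\ there exists a sequence $v_{n}\downarrow 0$ with $X_{v_{n}}\geq v_{n}^{1/\ga}$. Since $\ga>1$ we have $1/\ga<1$, hence $\hat{\beta}v_{n}^{1/\ga}-\hat{h}v_{n}>0$ for all $n$ large enough, and therefore $\cW_{R_4}>0$ almost surely.

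For finiteness, by symmetry I only need to show that $\sup_{v\geq 0}\{\hat{\beta}X_{v}-\hat{h}v\}<\infty$ a.s. I would reuse the Pruitt-type bound from Lemma~\ref{lem:R2step3}: for any $\epsilon>0$, a.s.\ $X_{v}\leq v^{(1+\epsilon)/\ga}$ for all $v$ large enough (see \cite[Sec.~3]{pruitt1981}). Taking $\epsilon>0$ small so that $(1+\epsilon)/\ga<1$ (possible since $\ga>1$), the linear drift dominates and $\hat{\beta}X_{v}-\hat{h}v\to-\infty$ as $v\to\infty$. Combined with local boundedness of the c\`adl\`ag paths on any compact interval, this gives the required a.s.\ finiteness.

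The only point where care is needed — and which should be highlighted — is that the argument uses $\ga>1$ twice in opposite directions: at short times, $1/\ga<1$ forces the energy term $\hat{\beta}X_{v}$ to beat the linear penalty $\hat{h}v$ (giving positivity), while at long times the same inequality $(1+\epsilon)/\ga<1$ forces the linear penalty to beat the energy growth (giving finiteness). This is fully consistent with Comment~2.b) in the paper, which explains why region~$R_{4}$ disappears precisely when $\ga\leq 1$: if $\ga<1$ then the long-time growth $X_{v}\sim v^{1/\ga}$ would overwhelm the linear drift and $\cW_{R_4}$ would be infinite.
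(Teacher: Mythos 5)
Your proposal is correct and follows essentially the same route as the paper's proof of Lemma~\ref{R4LM3}: positivity via the short-time lower bound $X_{v_n}\geq v_n^{1/\ga}$ from \cite[Th.~2.1]{Levyproc} with $u=0$, and finiteness via the decoupled bound $\cW_{R_4}\leq \sup_{u\leq 0}\{\cdot\}+\sup_{v\geq 0}\{\cdot\}$ together with Pruitt's long-time estimate $X_v\leq v^{(1+\epsilon)/\ga}$. The added remark on the two opposite uses of $\ga>1$ is accurate but not needed for the argument.
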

\begin{proof} The proof is analogous to the proof of Lemma \ref{lem:R2step3}. 
To show that $\cW_{4}>0$, we use that  $\cW_{4}\geq\sup_{v\geq 0}\{\hat{\beta}X_{v}^{(1)}-\hat{h}v\}$. By \cite[Th~2.1]{Levyproc}, there is a.s.\ a sequence $v_{n}\downarrow0$, such that $X_{v_{n}}^{(1)}\geq v_{n}^{1/\ga}$ for all $n$. Hence, for large enough~$n$, $\cW_{4} \geq\hat{\beta}v_{n}^{1/\ga}-\hat{h}v_{n}>0$, since $\ga>1$.

To show that $\cW_{4}<\infty$, we use that $\cW_{4}\leq\sup_{u\geq0}\{\hat{\beta}X_{u}^{(2)}-\hat{h}u\}+\sup_{v\geq 0}\{\hat{\beta}X_{v}^{(1)}-\hat{h}v\}$. By~\cite{pruitt1981}, we have that for any $\epsilon >0$, a.s. $X_v^{(1)} \le v^{(1+\epsilon)/\ga}$ for $v$ large enough. Therefore, if  $(1+\epsilon)/\ga <1$ (recall $\ga>1$),
we get that $\hat{\beta}X_{v}^{(1)}-\hat{h}v\le \hat{\beta}v^{(1+\epsilon)/a}-\hat{h}v\le 0$ for all $v$ sufficiently large. Similarly we also have that $\hat{\beta}X_{u}^{(2)}-\hat{h}u\le 0$ for all $u$ large enough.
This concludes the proof. 
\end{proof}

\subsubsection*{Convergence of $(M_N^-,M_N^+)$}

As in previous sections, we define
\begin{align*}
\cU_{4}^{\gep,\gep'} &= \Big\{ (u,v)\in (\bbR_+)^2  \colon  \sup_{(s,t) \in  B_{\gep}(u,v)}  \big\{ \hat{\beta}(X_t^{(1)} + X_s^{(2)})-\hat{h}(s+t) \big\} \geq \cW_{4} -\gep'\Big\}
\end{align*}
and the event $\cA_{N,4}^{\epsilon,\epsilon'} = \{\frac{1}{N^\xi}(-M_N^-,M_N^+)\notin\cU_{4}^{\epsilon,\epsilon'}\}.$
Then, in an identical manner as in Regions~$R_2$, 
we have that {  with $\hat \bbP$-probability close to $1$,}
\begin{equation*}
\frac{1}{N^{\xi-\zeta}}\log Z_{N}^{\go}(\cA_{N,4}^{\gep,\gep'})  < \cW_{4} \, \quad \text{ and so } \quad 
\frac{1}{N^{\xi-\zeta}}\log\bP_{N}^{\go}(\cA_{N,4}^{\gep,\gep'}) < 0 \,,
\end{equation*}
from which one deduces that 
\begin{equation}
\lim\limits_{N\to\infty}\bP_{N}^{\omega} \Big( \frac{1}{N^{\xi}}(-M_N^-,M_N^+)\in\cU_{4}^{\gep,\gep'} \Big)=1\,, \qquad \text{  in $\hat\bbP$-probability.}
\end{equation}
Moreover, if $\epsilon'$ is small enough, then $\cU_{4}^{\gep,\gep'}$ is contained in $B_{8\epsilon}(\cU^{(4)},\cV^{(4)})$,
which completes the proof.
\qed

\subsection{Region R5: proof of Theorem \ref{C104}}\label{sec:R5}

In Region $R_5$, we prove that the range size is of order $N^{\xi}$ with $\xi=\frac{1+\zeta}{3} \in (0,\frac12)$. Note that in this region we take $\hat h>0$ and that we have 
\[
1-2\xi=\xi-\zeta > \frac{\xi}{\ga}-\gamma\, \quad \text{and} \quad 
-1<\zeta<\frac{1}{2}\, .
\]

\subsubsection*{Convergence of the rescaled log-partition function}
We fix some constant $A =A(\hat h)$ (large) and we split the partition function as
\begin{equation*}\label{E501}
Z_{N}^{\omega}=Z_{N}^{\omega}\left(M_N^*\leq AN^{\xi}\right)+Z_{N}^{\omega}\left(M_N^*> AN^{\xi}\right).
\end{equation*}
The strategy of proof is similar to that in Region $R_2$, but with only two steps: (1)~we show that after taking logarithm and dividing by $N^{1-2\xi}$, the first term converges to some constant independent of $A$ (if $A$ is large enough); (2)~we show that for $A$ large the second term is negligible compared to the first one.

\paragraph*{Step 1.} We prove the following lemma.
\begin{lemma}\label{R5LM1}
In Region $R_5$, we have that for any $A>0$
\begin{equation*}
\lim_{N\to+\infty} \frac{1}{N^{1-2\xi}}\log Z_{N}^{\omega}\left(M_{N}^{*}\leq AN^{\xi}\right)  = \sup\limits_{ u,v \in [0,A]}\left\{-\hat{h}(u+v)-\bar{I}(u,v)\right\}
\quad \text{$\bbP$-a.s.},
\end{equation*}
where $\bar{I}(u,v):=\frac{\pi^{2}}{2}(u+v)^{-2}$ for $u,v\geq0$. By a simple calculation, the supremum is $-\frac{3}{2}(\hat{h}\pi)^{\frac{2}{3}}$ for any $A> \pi^{\frac{2}{3}}\hat{h}^{-\frac{1}{3}}$ and it is achieved at $u+v=\pi^{\frac{2}{3}}\hat{h}^{-\frac{1}{3}}$.
\end{lemma}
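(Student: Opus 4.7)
My plan is to follow the same block-decomposition strategy as in the proofs of Regions~$R_2$ and $R_4$ (see Lemmas~\ref{lem:R2step1} and~\ref{R4LM1}): decompose the partition function according to the approximate values of $(M_N^-,M_N^+)$ at scale $\delta N^{\xi}$, derive matching upper and lower bounds block by block, then pass to the max and let $\delta\downarrow 0$. The structural difference from Regions~$R_2$ and $R_4$ is that here the \emph{entropy} is no longer a subleading $o(1)$ correction but lives on exactly the same scale $N^{1-2\xi}=N^{\xi-\zeta}$ as the range term, while the disorder contribution $N^{\xi/\alpha-\gamma}$ is the one that becomes negligible (this is exactly the content of the defining inequalities of $R_5$). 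Accordingly, the limiting variational problem will feature the range penalty and an entropic penalty $\bar I(u,v)=\frac{\pi^2}{2(v-u)^2}$, with no disorder term.

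Concretely, for a fixed $\delta>0$ I would write
\begin{equation*}
Z_N^\omega\bigl(M_N^*\le AN^\xi\bigr)=\sum_{k_1=0}^{\lfloor A/\delta\rfloor}\sum_{k_2=0}^{\lfloor A/\delta\rfloor} Z_N^\omega(k_1,k_2,\delta),
\end{equation*}
with $Z_N^\omega(k_1,k_2,\delta)$ as in~\eqref{E303}, and bound each block above and below. On $\{M_N^-\in(-(k_1{+}1)\delta N^\xi,-k_1\delta N^\xi],\,M_N^+\in[k_2\delta N^\xi,(k_2{+}1)\delta N^\xi)\}$, the identity $|\cR_N|=M_N^+-M_N^-+1$ for one-dimensional SRW gives range term $-h_N|\cR_N|=-\hat h\bigl((k_1+k_2)+O(\delta)\bigr)N^{1-2\xi}$, using $\xi-\zeta=1-2\xi$. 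The disorder contribution is bounded uniformly in absolute value by $\hat\beta N^{-\gamma}\Omega^*_{AN^\xi}$, and Lemma~\ref{lem1} with a union bound yields, for every $\epsilon>0$,
\begin{equation*}
\bbP\Bigl(\hat\beta N^{-\gamma}\Omega^*_{AN^\xi}>\epsilon N^{1-2\xi}\Bigr)\le c\,\hat\beta^\alpha\epsilon^{-\alpha}A\,N^{\xi-\alpha(1-2\xi+\gamma)},
\end{equation*}
which tends to $0$ provided $\alpha(1-2\xi+\gamma)>\xi$; substituting $\xi=(1+\zeta)/3$, this inequality reduces to $\gamma>\frac{(2\alpha+1)\zeta-(\alpha-1)}{3\alpha}$, which one checks is implied in each of the three subcases $\alpha\in(1,2]$, $\alpha\in(\frac12,1)$ and $\alpha\in(0,\frac12)$ by the corresponding Region~$R_5$ hypothesis (the $\min$ appearing in the definition of $R_5$ for $\alpha<1$ is, in every case, at least as large as this threshold). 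For the entropic factor, Lemma~\ref{lem:subdiffusive} gives
\begin{equation*}
\frac{1}{N^{1-2\xi}}\,p_N^\delta(k_1\delta,k_2\delta)\xrightarrow[N\to\infty]{}-\frac{\pi^2}{2((k_1+k_2)\delta)^2},
\end{equation*}
uniformly in $(k_1,k_2)$ ranging over any set bounded away from $(0,0)$.

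Combining these estimates, both the upper and the lower bound on $N^{-(1-2\xi)}\log Z_N^\omega(M_N^*\le AN^\xi)$ converge in $\bbP$-probability to the same quantity
\begin{equation*}
\max_{u,v\in U_\delta,\,(u,v)\neq(0,0)}\bigl\{-\hat h(v-u)-\bar I(u,v)\bigr\}+O(\delta),
\end{equation*}
where $U_\delta=\{0,\delta,\ldots,A\}$ and the $O(\delta)$ accounts for the width of the blocks in the range term. Since the finite maximum has only finitely many summands, the convergence is a simple consequence of the convergence of each finite collection of blocks. Letting $\delta\downarrow 0$ and using that $(u,v)\mapsto-\hat h(v-u)-\bar I(u,v)$ is continuous on $\{v-u>0\}$ yields the claimed limit. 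The last calculation of $\sup_{r\ge 0}\{-\hat h r-\pi^2/(2r^2)\}=-\frac32(\hat h\pi)^{2/3}$ (attained at $r=\pi^{2/3}\hat h^{-1/3}$) shows that the supremum coincides with $\sup_{-A\le u\le 0\le v\le A}\{\cdots\}$ as soon as $A\ge 2\pi^{2/3}\hat h^{-1/3}$.

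The main obstacle is verifying, separately in each of the three ranges of $\alpha$, that the Region~$R_5$ hypothesis really forces the disorder term to be $o(N^{1-2\xi})$ with high $\bbP$-probability; the Borel–Cantelli/Lemma~\ref{lem1} bound is not automatically strong enough, and in the $\alpha<\frac12$ case one may need a dyadic decomposition as in~\eqref{E204_0}--\eqref{E204_2} of Section~\ref{sec:reg1} to handle both small and large deviations of $\Omega^*$. A secondary (minor) technicality is excluding the degenerate block $(k_1,k_2)=(0,0)$, where the entropy is $-\infty$: this is handled by noting that $\bP(M_N^*=0)=0$ for $N\ge 1$, so the corresponding summand vanishes.
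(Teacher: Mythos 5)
Your proposal is correct and rests on the same two ingredients as the paper's proof: the uniform bound of the disorder contribution by $\hat\beta N^{-\gamma}\Omega^*_{AN^{\xi}}$, which is $o_{\bbP}(N^{1-2\xi})$ precisely because the Region~$R_5$ hypotheses imply $1-2\xi>\tfrac{\xi}{\alpha}-\gamma$ (your exponent computation), and the LDP of Lemma~\ref{lem:subdiffusive} for the remaining range-vs-entropy competition --- the only difference being that the paper factors the disorder out of $Z_N^{\omega}(M_N^*\le AN^{\xi})$ to reduce to the disorder-free quantity $\widehat Z_N^A=\bE[e^{-h_N|\cR_N|}\ind_{\{M_N^*\le AN^{\xi}\}}]$ and then invokes Varadhan's lemma, whereas you implement the same Laplace principle by hand via the block decomposition of Regions~$R_2$ and~$R_4$. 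The ``main obstacle'' you flag is not one: since everything is already restricted to $\{M_N^*\le AN^{\xi}\}$ and only convergence in probability is claimed, the single application of Lemma~\ref{lem1} in your displayed bound suffices and no dyadic decomposition is needed (that device is only required on the complementary event, i.e.\ in Lemma~\ref{R5LM2}).
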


\begin{proof}
For any fixed $A$, we have the following upper and lower bounds
\begin{equation}\label{eq:ProofR5Zn}
  \log \widehat Z_{N}^A -\hat \gb N^{-\gamma}  \Sigma_{AN^{\xi}}^* \leq\log Z_{N}^{\go} \left(M_{N}^{*}\leq AN^{\xi}\right) \leq   \log \widehat Z_{N}^A + \hat \gb N^{-\gamma} \Sigma_{AN^{\xi}}^*\, ,
\end{equation}
where  $\widehat Z_N^A := \bE\big[ \exp( - h_N |\cR_N| ) \ind_{\{M_N^* \leq A N^{\xi}\}} \big]$.

Since in Region $R_5$ we have $ 1-2\xi  >  \frac{\xi}{\ga} -\gamma$,
we get that $N^{-(1-2\xi)}   \hat \gb N^{-\gamma} \Sigma_{AN^{\xi}}^*$ goes to~$0$ $\bbP$-a.s.\ (see e.g.~Lemma~\ref{lem1}).
Therefore, we only need to prove that
\[
\lim_{N\to\infty }\frac{1}{N^{1-2\xi}} \log \widehat Z_{N}^A = \sup\limits_{ u,v \in [0,A] }\big\{-\hat{h}(u+v)-\bar{I}(u,v)\big\} 
\]
(there is no disorder anymore).
But this convergence is quite standard, since $\bar{I}(u,v)$ is the rate function for the LDP for $(N^{-\xi} M_N^-$, $N^{-\xi}M_N^+)$: more precisely, by Lemma~\ref{lem:subdiffusive}
\begin{equation}
- \bar{I}(u,v) = \lim_{N\to\infty} \frac{1}{N^{1-2\xi}} \log \bP\big( M_N^- \geq -u N^{\xi} ; M_N^+ \leq v N^{\xi} \big)\, .
\end{equation}
This is enough to conclude thanks to Varadhan's lemma.
\end{proof}

\paragraph*{Step 2.} Next, we prove the following lemma.
\begin{lemma}\label{R5LM2}
In Region $R_5$, 
we have for any fixed $A$
\[
\limsup_{N\to\infty} \frac{1}{N^{1-2\xi}}\log Z_{N}^{\omega}\left(M_{N}^{*}>AN^{\xi}\right) \leq  -\frac12 A \hat h \qquad \text{$\bbP$-a.s.}
\]
\end{lemma}

\noindent
Combining this result with Lemma~\ref{R5LM1} readily yields
the convergence~\eqref{E121}, provided that $A> \pi^{\frac{2}{3}}\hat{h}^{-\frac{1}{3}}$ and  $\frac12 A \hat h > \frac{3}{2}(\hat{h}\pi)^{\frac{2}{3}}$.

\begin{proof}
We consider four cases, which correspond to four different conditions on $\gamma, \zeta$  (see Figures~\ref{F1}-\ref{F2}-\ref{F3}):
(i) $\ga\in(1,2]$ and $\zeta\in(-1,1/2)$;
(ii) $\alpha \in (0,1)$ and $\zeta\in(-1,0]$;
(iii) $\alpha \in (\frac12,1)$ and $\zeta \in (0,\frac12)$;
(iv) $\alpha \in (0,\frac12)$ and $\zeta \in (0,\frac12)$.
We deal with the first two ones at the same time and we treat the third and fourth one afterwards since the strategy of the proof is slightly different.

%

\smallskip
\textit{Cases (i)-(ii).} Let us write
\begin{multline*}
Z_{N}^{\omega} \big(M_{N}^{*} > AN^{\xi}\big)
 = \sum\limits_{k=1}^{\log_{2}(\frac1A N^{1-\xi}) } Z_{N}^{\omega}\left(M_{N}^{*}\in(2^{k-1}AN^{\xi},2^{k}AN^{\xi}]\right)  \\ 
 \leq \sum\limits_{k=1}^{\log_{2}(\frac1A N^{1-\xi}) } \exp\left( C(A,\hat \gb,\go) \, 2^{k/\alpha}  N^{\frac{\xi}{\alpha} -\gamma} (\log _2 N)^{2/\alpha}-\hat{h}2^{k-1}AN^{\xi-\zeta} \right) 
\end{multline*}
where we have used Lemma~\ref{lem1} to bound $\Sigma_{2^{k} AN^{\xi}}^{*}$ by $C(\go) 2^{k/\alpha} A^{1/\alpha} N^{\xi/\alpha} (\log _2 N)^{2/\alpha}$, $\bbP$-a.s..
Now, uniformly for $k$ in the sum we have
\[
\frac{2^{k/\alpha}  N^{\frac{\xi}{\alpha} -\gamma} (\log _2 N)^{2/\alpha}}{ 2^{k} N^{\xi-\zeta} } \leq 
(\log _2 N)^{2/\alpha} 
\begin{cases}
 N^{\frac{\xi}{\alpha} -\gamma - (\xi-\zeta)}  & \quad \text{ if } \alpha\in (1,2]\,,\\
A^{-\frac{1-\alpha}{\alpha} } N^{\frac{1-\alpha}{\alpha} +\zeta-\gamma}  & \quad \text{ if } \alpha\in (0,1)\,.
\end{cases}
\]
Notice that in cases (i)-(ii) the upper bound always goes to $0$ as $N\to\infty$,
because $\frac{\xi}{\alpha} -\gamma > \xi-\zeta$ in the case $\alpha\in (1,2]$ and $\gamma > \zeta - \frac{\alpha-1}{\alpha}$
in the case $\alpha\in (0,1)$.
Therefore, $\bbP$-a.s., for $N$ large enough, we have
\[
Z_{N}^{\omega} \big(M_{N}^{*} > AN^{\xi}\big)
\leq \sum\limits_{k=1}^{\log_{2}(\frac1A N^{1-\xi}) } \exp\left( -\hat{h}2^{k-2} AN^{\xi-\zeta} \right) 
\leq C \exp\Big( - \frac12 \hat{h} AN^{\xi-\zeta} \Big) \,.
\]
Since $\xi-\zeta = 1-2\xi$, this concludes the proof.

\smallskip
\textit{Cases (iii)-(iv)}. In that case, we have $\xi\in(0,1/2)$ and $\zeta\in(0,1/2)$. Hence, we can write
\begin{equation}\label{1-zeta}
Z_N^\omega(M_N^*>AN^\xi)=Z_N^\omega(M_N^*\in(AN^\xi,N^{1-\zeta}])+Z_N^\omega(M_N^*\in(N^{1-\zeta},N]).
\end{equation}
For the first term, we have similarly as above that
$Z_N^\omega(M_N^*\in(AN^\xi,N^{1-\zeta}])$
is bounded by 
\begin{equation*}
\sum\limits_{k=1}^{\log_2(\frac{1}{A}N^{1-\xi-\zeta})}\exp\left(C(A,\hat{\beta},\omega)2^{k/\ga}N^{\frac{\xi}{\ga}-\gamma}(\log_2 N)^{2/\ga}-\hat{h}2^{k-1}AN^{\xi-\zeta}\right)\,.
\end{equation*}
Now, uniformly for $k$ in the sum we have
\begin{equation*}
\frac{2^{k/\alpha}  N^{\frac{\xi}{\alpha} -\gamma} (\log _2 N)^{2/\alpha}}{ 2^{k} N^{\xi-\zeta} }\leq(\log_2N)^{2/\ga}A^{-\frac{1-\ga}{\ga}}N^{-\gamma+\frac{2\ga-1}{\ga}\zeta+\frac{1-\ga}{\ga}}\,,
\end{equation*}
with the upper bound vanishing, because in cases (iii)-(iv) we have 
$\gamma>\frac{2\ga-1}{\ga}\zeta+\frac{1-\ga}{\ga}$ for $\zeta\in(0,1/2)$ (note that if $\alpha<\frac12$, $\frac{2\ga-1}{\ga}<0$). Therefore, $\bbP$-a.s., for $N$ large enough, we get
\begin{equation}\label{1-zeta_a}
Z_{N}^{\omega} \big(M_{N}^{*} \in(AN^{\xi},N^{1-\zeta}]\big)
\leq \sum\limits_{k=1}^{\log_{2}(\frac1A N^{1-\xi-\zeta}) } \exp\left( -\hat{h}2^{k-2} AN^{\xi-\zeta} \right) 
\leq C \exp\Big( - \frac12 \hat{h} AN^{\xi-\zeta} \Big)\,.
\end{equation}
For the second term on the right-hand side of \eqref{1-zeta}, we have
\begin{equation*}
\begin{split}
Z_N^\omega(M_N^*\in(N^{1-\zeta},N])&=\sum\limits_{k=1}^{\log_2(N^\zeta)}Z_N^\omega(M_N^*\in(2^{-k}N,2^{-k+1}N])\\
&\leq\sum\limits_{k=1}^{\log_2(N^\zeta)}2\exp\left(C(\hat{\beta},\omega)2^{-k/\ga}N^{\frac{1}{\ga}-\gamma}(\log_2N)^{2/\ga}-2^{-2k-1}N\right),
\end{split}
\end{equation*}
where we have used Lemma~\ref{lem1} to bound $\sum_{2^-kN}^*$ by $C(\omega)2^{-k/\ga}N^{\frac{1}{\ga}-\gamma}(\log_2 N)^{2/\ga},\bbP$-a.s. and also the fact that $\bP(M_N^*>x)\leq2\exp(-\frac{x^2}{2N})$.
Now, uniformly for $k$ in the sum, we have
\begin{equation*}
\frac{ 2^{-k/\ga}N^{\frac{1}{\ga}-\gamma-1}(\log_2N)^{2/\ga}}{2^{-2k} N}\leq(\log_2N)^{2/\ga}
\begin{cases}
N^{-\gamma+\frac{2\ga-1}{\ga}\zeta+\frac{1-\ga}{\ga}}  &\text{if}\,\, \alpha \in(\frac12,1),\\
N^{\frac{1-\alpha}{\alpha}-\gamma}  &\text{if}\,\, \alpha \in(0,\frac12) \,,
\end{cases}
\end{equation*}
which goes to $0$ in cases (iii)-(iv), because
$\gamma>\frac{2\ga-1}{\ga}\zeta+\frac{1-\ga}{\ga}$ if $\alpha\in(\frac12,1)$
and $\gamma>\frac{1-\ga}{\ga}$ if $\alpha\in(0,\frac12)$. Therefore, $\bbP$-a.s., for $N$ large enough, we have
\begin{equation}\label{1-zata_b}
Z_N^\omega(M_N^*\in(N^{1-\zeta},N])\leq C\log_2 N\exp\left(-\frac{1}{4}N^{1-2\zeta}\right)\leq\exp\left(-\hat hAN^{\xi-\zeta}\right),
\end{equation}
where in the last inequality we have used the fact that $1-2\zeta-(\xi-\zeta)=1-\xi-\zeta>0$, since $\xi<\frac12$ and $\zeta<\frac12$.

Combining \eqref{1-zeta_a}-\eqref{1-zata_b} with \eqref{1-zeta} and since $\xi-\zeta=1-2\xi$, this concludes the proof.
\end{proof}

\subsubsection*{Convergence of $M_N^+-M_N^-$}
Let us define $c_{\hat h}:=\pi^{\frac{2}{3}}\hat{h}^{-\frac{1}{3}}$. Let $\epsilon>0$ and define 
the event 
\[
\cA_{N,5}^{\gep} = \Big\{ \Big|\frac{1}{N^{\xi}} (M_N^+ -M_N^-)  - c_{\hat h} \Big|  > \gep \Big\}.
\]
As in the previous sections, since 
$\log \bP_{N}^{\go} (\cA_{N,5}^{\gep}  ) = 
\log Z_{N}^{\go} ( \cA_{N,5}^{\gep}  )- \log Z_{N}^{\go}$,
using the convergence~\eqref{E121}
we simply need to show that
there is some $\gd_{\gep} >0$ such that 
\begin{equation*}
\lim_{N\to\infty} \bbP\Big( \frac{1}{N^{1-2\xi}} \log Z_{N}^{\go} ( \cA_{N,5}^{\gep} )
 <  -\frac{3}{2} (\hat h \pi)^{2/3} - \gd_{\gep} \Big)   = 1
\end{equation*}
But this is simply due to the fact that analogously to Lemma~\ref{R5LM1}, we have $\bbP$-a.s.
\[
\lim_{N\to+\infty}
 \frac{1}{N^{1-2\xi}} \log Z_{N}^{\go} (\cA_{N,5}^{\gep}) 
 =
 \sup_{ u,v\geq 0 , |u+v -c_{\hat h}| >\gep } \left\{-\hat{h}(u+v)-\bar{I}(u,v)\right\}  < -\frac{3}{2} (\hat h \pi)^{2/3}  \,,
\]
where the inequality is strict since the supremum 
in Lemma~\ref{R5LM1} is attained for ${u+v = c_{\hat h}}$.
\qed

\subsection{Region R6: proof of Theorem \ref{C105}} 
Recall that in Region $R_6$,
we have 
\[
\zeta<(-1)\wedge\gamma \quad \text{ if } \alpha \in (1,2]
\qquad \text{ and } \qquad \zeta <(-1)\wedge \Big(\gamma + \frac{\ga-1}{\ga} \Big) \quad \text{ if } \alpha\in (0,1) \,.
\]
Let us note that in all cases we have $\gamma >\gz$.
We split $Z_{N}^{\omega}$ in two parts
\begin{equation}\label{E601}
Z_{N}^{\omega}=Z_{N}^{\omega}\left( |\cR_N| =2\right)+Z_{N}^{\omega}\left( |\cR_N| \geq 3\right).
\end{equation}
It is clear that 
\begin{align*}
Z_{N}^{\omega}\left( |\cR_N| =2\right) = e^{- 2 \hat h N^{-\zeta} } \Big( e^{\hat \gb N^{-\gamma} (\go_0+\go_1)} 2^{-N} + e^{\hat \gb N^{-\gamma} (\go_0+\go_{-1})}  2^{-N}\Big)\, ,
\end{align*}
so that $N^{\zeta} \log Z_{N}^{\omega}\left( |\cR_N| =2\right)$
converges $\bbP$-a.s.\ to $-2\hat h$, since $\zeta<\gamma$ and $\zeta<-1$.

\smallskip
We now prove that $\limsup_{N\to\infty} N^{\zeta} \log Z_{N}^{\omega}\left( |\cR_N| \geq 3\right)$ is strictly smaller than $- 2 \hat h $ a.s.: this will imply that the second term in \eqref{E601} is negligible compared to the first one and 
as a consequence prove that $\bP_{N}^{\go} (|\cR_N|=2)$ converges to $1$ $\bbP$-a.s.

We write
\[
Z_{N}^{\omega}\left( |\cR_N| \geq 3\right)
= \sum_{k=3}^{N} Z_{N}^{\omega}\left( |\cR_N| = k\right)
\leq  \sum_{k=3}^{N} \exp\Big( -k \hat h  N^{-\zeta} + C(\go) \, \hat \gb  N^{-\gamma}  k^{1/\alpha}   (\log _2 N)^{2/\alpha} \Big) \,,
\]
where we have used Lemma~\ref{lem1}
to bound $\Sigma_{k}^{*}$ for the last inequality.
Now, uniformly for $k$ in the sum we have
\[
\frac{k^{1/\alpha}  N^{ -\gamma} (\log _2 N)^{2/\alpha}}{ k N^{-\zeta} } \leq 
(\log _2 N)^{2/\alpha} 
\begin{cases}
 N^{\zeta -\gamma}  & \quad \text{ if } \alpha\in (1,2]\,,\\
 N^{\frac{1-\alpha}{\alpha} +\zeta-\gamma}  & \quad \text{ if } \alpha\in (0,1)\,.
\end{cases}
\]
The upper bound always goes to $0$ as $N\to\infty$,
because $\gamma > \zeta$ in the case $\alpha\in (1,2]$ and $\gamma > \zeta - \frac{\alpha-1}{\alpha}$
in the case $\alpha\in (0,1)$.
Therefore, $\bbP$-a.s., for $N$ large enough, we have
\[
Z_{N}^{\omega} \big(M_{N}^{*} > AN^{\xi}\big)
\leq \sum\limits_{k=3}^{N} \exp\left( - \frac{5}{6} k\hat{h} N^{-\zeta} \right) 
\leq C \exp\left( - \frac{5}{2} \hat{h} N^{-\zeta} \right) 
\,.
\]
We therefore get $\limsup_{N\to\infty} N^{\zeta} \log Z_{N}^{\omega}\left( |\cR_N| \geq 3\right) \leq  - \frac{5}{2} \hat{h} < -2 \hat h$ a.s.,
which concludes the proof.
\qed

\section{Proof of the remaining results: the case $\hat h<0$}
\label{sec:proof2}

In all this section, we consider only the case $\hat h<0$.

\subsection{Region~$\tilde R_4$: proof of Theorem~\ref{thm:R4tilde}}
\label{sec:tildeR4}

In this region, we prove that the range size is of order $N^{\xi}$ with $\xi=1-\zeta \in (\frac12,1)$. 
Recall that in Region~$\tilde R_4$ we have 
\[
2\xi-1=\xi-\zeta > \frac{\xi}{\ga}-\gamma\, \quad \text{and} \quad 
0<\zeta<\frac{1}{2}\, .
\]
The proofs are almost identical to what is done in regions~$R_5$-$R_6$,
so we give much less detail.
We fix some constant $A = A_{\hat h} := 32 |\hat h|$ and we split the partition function as
\begin{equation*}\label{E401tilde}
Z_{N}^{\omega}=Z_{N}^{\omega}\left(M_N^*\leq AN^{\xi}\right)+Z_{N}^{\omega}\left(M_N^*> AN^{\xi}\right).
\end{equation*}

\paragraph*{Step 1.} We have the following lemma, analogous to Lemma \ref{R5LM1}.
\begin{lemma}\label{tildeR4LM1}
In Region $\tilde R_4$, for any $A>0$ we have the following convergence
\begin{equation*}
\lim_{N\to+\infty}\frac{1}{N^{2\xi-1}}\log Z_{N}^{\omega}\left(M_{N}^{*}\leq AN^{\xi}\right) = \sup\limits_{ u,v\in [0,A] }\left\{|\hat{h}|(u+v)-I(u,v)\right\}
\qquad \text{$\bbP$-a.s.}
\end{equation*}
By a simple calculation, the supremum is $\frac{1}{2}{\hat{h}^2}$ for any $A\geq |\hat h|$ and it is attained at $(u,v) = (0,|\hat h|)$ or $(u,v) = (|\hat h|,0)$.
\end{lemma}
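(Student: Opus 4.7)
My plan is to mirror very closely the two-step strategy used for Lemma~\ref{R5LM1}, but working in the super-diffusive regime $\xi = 1-\zeta \in (\tfrac12,1)$ rather than the sub-diffusive one. The key structural point is that in region~$\tilde R_4$ we have $2\xi-1 = \xi-\zeta > \xi/\alpha - \gamma$, so the ``disorder'' term lives on a strictly smaller scale than the ``range--entropy'' balance, and the whole analysis can be carried out with the disorder treated as a uniform error term on the event $\{M_N^* \leq AN^{\xi}\}$.

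Step 1 (reduction to a disorder-free partition function). I would first sandwich
\[
\log \widehat Z_{N}^A - \hat\beta N^{-\gamma}\Omega^*_{AN^{\xi}} \;\leq\; \log Z_{N}^{\omega}\!\left(M_N^*\leq AN^{\xi}\right) \;\leq\; \log \widehat Z_{N}^A + \hat\beta N^{-\gamma}\Omega^*_{AN^{\xi}},
\]
where $\widehat Z_N^A := \mathbf{E}\big[\exp(|\hat h| N^{-\zeta}|\mathcal{R}_N|)\, \mathbbm{1}_{\{M_N^* \leq AN^{\xi}\}}\big]$ (recall $\hat h<0$, so the range now enters with a favourable sign). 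Then Lemma~\ref{lem1} gives $N^{-\gamma}\Omega^*_{AN^{\xi}} = O_{\bbP}(N^{\xi/\alpha-\gamma})$, and since $\xi/\alpha - \gamma < 2\xi-1$ in region~$\tilde R_4$, we obtain $N^{-(2\xi-1)}\hat\beta N^{-\gamma}\Omega^*_{AN^{\xi}} \xrightarrow{\bbP} 0$. It thus suffices to identify the limit of $N^{-(2\xi-1)}\log \widehat Z_N^A$.

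Step 2 (Varadhan's lemma for the disorder-free problem). Since $\widehat Z_N^A$ is deterministic, I would apply Lemma~\ref{lem:superdiffusive}, which provides an LDP for $(N^{-\xi}M_N^{-},N^{-\xi}M_N^{+})$ at speed $N^{2\xi-1}$ with rate function $I(u,v) = \tfrac12(|u|\wedge|v|+v-u)^2$. Writing
\[
\widehat Z_N^A \;\leq\; e^{|\hat h|(v-u)N^{\xi-\zeta}}\,\mathbf{P}\big(M_N^- \geq uN^{\xi}, M_N^+ \leq v N^{\xi}\big)
\]
after a partition of $[-A,0]\times[0,A]$ into $\delta$-cells (exactly as in the proof of Lemma~\ref{R5LM1}) and using the matching lower bound, a standard application of Varadhan's lemma on the compact square $[-A,0]\times[0,A]$ gives
\[
\lim_{N\to\infty}\frac{1}{N^{2\xi-1}}\log \widehat Z_N^A \;=\; \sup_{-A\leq u\leq 0\leq v\leq A}\Big\{|\hat h|(v-u) - I(u,v)\Big\},
\]
using crucially that $\xi-\zeta = 2\xi-1$ so that the range term and the LDP rate live at the same exponential scale. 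Combining this with Step~1 yields the stated convergence in probability.

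Step 3 (solving the variational problem). To check the explicit value $\tfrac12 \hat h^2$, note that for fixed $r := v-u$ the rate $I(u,v) = \tfrac12(|u|\wedge|v|+r)^2$ is minimized over $\{u\leq 0\leq v,\,v-u=r\}$ by taking either $u=0$ or $v=0$, giving $I = \tfrac12 r^2$. The supremum then reduces to $\sup_{r\in[0,A]}\{|\hat h|r - \tfrac12 r^2\}$, attained at $r = |\hat h|$ with value $\tfrac12 \hat h^2$ as soon as $A\geq |\hat h|$. I do not anticipate any real obstacle: the only point requiring mild care is checking that Varadhan's lemma applies with the linear continuous reward $|\hat h|(v-u)$ on the compact box $[-A,0]\times[0,A]$, which is immediate once the uniformity in Lemma~\ref{lem:superdiffusive} is invoked on each $\delta$-cell and one lets $\delta\downarrow 0$.
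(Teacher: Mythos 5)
Your proposal is correct and follows essentially the same route as the paper: reduce to the disorder-free partition function via the sandwich $\log \widehat Z_N^A \pm \hat\beta N^{-\gamma}\Omega^*_{AN^{\xi}}$ (negligible since $\xi/\alpha-\gamma<2\xi-1$ in $\tilde R_4$), then apply Varadhan's lemma with the LDP of Lemma~\ref{lem:superdiffusive} at speed $N^{2\xi-1}$, using $\xi-\zeta=2\xi-1$. The only cosmetic remark is that the probability appearing in your Step~2 upper bound should be the $\delta$-cell event for $(M_N^-,M_N^+)$ as in \eqref{AE01} (the stretching LDP), not the confinement event $\{M_N^-\geq uN^{\xi},\,M_N^+\leq vN^{\xi}\}$; your subsequent description of the cell decomposition makes clear this is just a slip of notation.
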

\begin{proof}
Since in Region $\tilde R_4$ we have $ 2\xi -1  >  \frac{\xi}{\ga} -\gamma$, for any fixed $A$
we get that $N^{-(2\xi-1)}  \times \hat \gb N^{-\gamma} \Sigma_{AN^{\xi}}^*$ almost surely goes to $0$. Therefore
we only need to prove that
\begin{equation}
\label{conv:variational}
\lim_{N\to\infty }\frac{1}{N^{2\xi-1}} \log Z_{N}^{0} \left(M_{N}^{*}\leq AN^{\xi}\right) = \sup\limits_{ u,v\in [0,A] }\left\{|\hat{h}|(u+v)-{I}(u,v)\right\} \,,
\end{equation} 
where $Z_{N}^0$ denotes the partition function with $\go\equiv 0$
(or equivalently $\gb_N \equiv 0$).

But~\eqref{conv:variational} follows from Varadhan's lemma, since $I(u,v)$ is the rate function for the LDP for $(N^{-\xi} M_N^-$, $N^{-\xi}M_N^+)$, by Lemma~\ref{lem:superdiffusive}.
\end{proof}

\paragraph*{Step 2} To conclude the proof of the convergence~\eqref{Etilde123}, it remains to show the  following.
\begin{lemma}\label{tildeR4LM2}
In Region $\tilde R_4$, 
we have for any $A\geq 32 |\hat h|$
\[
\limsup_{N\to\infty} \frac{1}{N^{2\xi-1}}\log Z_{N}^{\omega}\left(M_{N}^{*}>AN^{\xi}\right) \leq  0 \qquad \text{$\bbP$-a.s.}
\]
\end{lemma}
\noindent
Together with Lemma~\ref{tildeR4LM1}, this readily yields
the convergence~\eqref{Etilde123}.

\begin{proof}
We write
\begin{multline*}
Z_{N}^{\omega} \big(M_{N}^{*} > AN^{\xi}\big)
 = \sum\limits_{k=1}^{\log_{2}(\frac1A N^{1-\xi}) } Z_{N}^{\omega}\left(M_{N}^{*}\in(2^{k-1}AN^{\xi},2^{k}AN^{\xi}]\right)  \\ 
 \leq \sum\limits_{k=1}^{\log_{2}(\frac1A N^{1-\xi}) } 2 \exp\left( C(A,\hat \gb,\go) \, 2^{k/\alpha}  N^{\frac{\xi}{\alpha} -\gamma} (\log _2 N)^{2/\alpha} +2^{k+1}  |\hat h|  A N^{\xi-\zeta} 
 - 2^{2k-3} A^2 N^{2\xi-1}\right) \,, 
\end{multline*}
where we have used Lemma~\ref{lem1} to get an almost sure bound on $\Sigma_{2^{k} AN^{\xi}}^{*}$,
and also the fact that $\bP(M_{N}^{*} >x) \leq 2 \exp(- \frac{x^2}{2N})$.
Now, as in the proof of Lemma~\ref{R5LM2},
the ``disorder'' term is seen to be negligible compared to the ``range'' term (uniformly for $k$ in the sum):
we get that $\bbP$-a.s., for $N$ large enough,
\[
Z_{N}^{\omega} \big(M_{N}^{*} > AN^{\xi}\big)
 \leq \sum\limits_{k=1}^{\log_{2}(\frac1A N^{1-\xi}) } 2 \exp\left( 2^{k+2}  |\hat h|  A N^{\xi-\zeta} 
 - 2^{2k-3} A^2 N^{2\xi-1}\right) \leq  2 \log_{2}(\tfrac1A N^{1-\xi}) \,,
\]
where for the last inequality we have used that $\xi-\zeta =2\xi-1$ and that $A\geq 2^5 |\hat h|$. 
This concludes the proof.
\end{proof}

\paragraph*{Convergence of trajectories.}

First of all, let us go one step further in the proof of Lemma~\ref{tildeR4LM1}. Indeed, in~\eqref{conv:variational} the supremum in the variational problem
is attained at $(u,v) = (0,|\hat h|)$ or $(u,v) = (|\hat h|,0)$,
so we can deduce that the main contribution 
to~$Z_{N}^{0}$ (hence to $Z_{N}^{\go}$ in view of the proof of Lemma~\ref{tildeR4LM1}) comes from trajectories with  $N^{-\xi} (M_{N}^-,M_N^+)$ either close to $(0,|\hat h|)$ or to $(-|\hat h|,0)$.
One can actually show that the main contribution comes from
trajectories moving at roughly constant speed to these endpoints
(similarly to Proposition~\ref{prop:ballistic}):
using~\eqref{ldp:ballistic}, one easily gets that analogously to~\eqref{conv:variational}, for any $\gep>0$,
\begin{equation}
\limsup_{N\to\infty }\frac{1}{N^{2\xi-1}} \log Z_{N}^{0} \left(M_{N}^{*}\leq AN^{\xi}, (\cB_N^{+,\gep} \cup \cB_N^{-,\gep} )^c\right) < \sup\limits_{ u,v\in [0,A] }\left\{|\hat{h}|(u+v)-{I}(u,v)\right\} \,,
\end{equation}
where we recall the definition of the events $\cB_N^{\pm,\gep}$ (recall $\hat h<0$):
\[
\cB_N^{+,\gep} :=\Big \{  \sup_{t\in [0,1]} \big| N^{-\xi} S_{\lfloor tN\rfloor} +\hat h\, t \big| \leq \gep \Big\} \,,\qquad
\cB_N^{-,\gep} :=\Big \{  \sup_{t\in [0,1]} \big| N^{-\xi} S_{\lfloor tN\rfloor} -\hat h\, t \big| \leq \gep \Big\} \,.
\]
All together, in view of the fact that  $N^{-(2\xi-1)}  \times \hat \gb N^{-\gamma} \Sigma_{AN^{\xi}}^*$ goes to $0$ a.s.,
we get that 
\[
\limsup_{N\to\infty }\frac{1}{N^{2\xi-1}} \log Z_{N}^{\go} \left( (\cB_N^{+,\gep} \cup \cB_N^{-,\gep} )^c\right) < 
\lim_{N\to\infty }\frac{1}{N^{2\xi-1}} \log Z_{N}^{\go} \,,
\]
from which one deduces that
\begin{equation*}
\lim_{N\to\infty}\bP_N^{\go} (\cB_N^{+,\gep} \cup \cB_N^{-,\gep} ) = 1 \qquad \text{$\bbP$-a.s.}
\end{equation*}
Given $\hat{h}$, the events $\cB_N^{\pm,\gep}$ are disjoint for $\gep$ small enough: this implies in particular that 
\begin{equation}
\label{eq:twoprobabto1}
\lim_{N\to\infty}\bP_N^{\go} ( \cB_N^{+,\gep} ) + \bP_N^{\go}( \cB_N^{-,\gep} ) = \lim_{N\to+\infty} \frac{Z_{N}^{\go} ( \cB_N^{+,\gep} ) +  Z_N^{\go}(\cB_N^{-,\gep} )}{Z_{N}^{\go}} =1 \,.
\end{equation}

Now, denoting again $Z_{N}^0$ the partition function with $\go\equiv 0$
(or equivalently $\gb_N \equiv 0$)
and $\bP_N^0$ the corresponding measure, we have
\[
e^{\gb_N \Sigma_{ (|\hat h|-\epsilon) N^{\xi} }^{+} -\gb_N  R_N^{2\gep}(0,|\hat h|-\epsilon) }  \bP_{N}^0 ( \cB_N^{+,\gep}  ) \leq 
\frac{Z_{N}^{\go} ( \cB_N^{+,\gep}  )}{ Z_{N}^0 } \leq e^{ \gb_N \Sigma_{ (|\hat h|-\gep) N^{\xi}  }^{+} +\gb_N  R_N^{2\gep}(0,|\hat h|-\gep) } \bP_{N}^0 ( \cB_N^{+,\gep}  ) \,,
\]
where $R_N^{\gep}(u,v)$ is defined in~\eqref{E306}.
A similar inequality holds with $\cB_N^{-,\gep} $ in place of $\cB_N^{+,\gep} $, simply by replacing $\Sigma_{ (|\hat h|-\gep) N^{\xi}  }^{+}$ by $\Sigma_{ (|\hat h|-\gep) N^{\xi}  }^{-}$ and $R_N^{2\gep}(0,|\hat h|-\gep)$ by $R_N^{2\gep}(|\hat h|-\gep,0)$.

Therefore, we have 
\begin{multline}
\frac{Z_{N}^{\go} ( \cB_N^{+,\gep} )}{Z_{N}^{\go} ( \cB_N^{+,\gep} ) +  Z_N^{\go}(\cB_N^{-,\gep} )}   \\
  \leq \frac{ e^{ \gb_N \Sigma_{ (|\hat h|-\gep) N^{\xi}  }^{+} +\gb_N  R_N^{2\gep}(0,|\hat h|-\gep) }  \bP_{N}^0 (\cB_N^{+,\gep} ) }{ e^{ \gb_N \Sigma_{ (|\hat h|-\gep) N^{\xi}  }^{+} -\gb_N  R_N^{2\gep}(0,|\hat h|-\gep) } \bP_{N}^0  (\cB_N^{+,\gep} )  +  e^{ \gb_N \Sigma_{ (|\hat h|-\gep) N^{\xi}  }^{-} -\gb_N  R_N^{2\gep}(|\hat{h}|-\gep,0) } \bP_{N}^0  (\cB_N^{-,\gep} ) } \,,
\label{eq:splitprobab}
\end{multline}
and similarly
\begin{multline}
\frac{Z_{N}^{\go} ( \cB_N^{+,\gep} )}{Z_{N}^{\go} ( \cB_N^{+,\gep} ) +  Z_N^{\go}(\cB_N^{-,\gep} )}  \\
  \geq \frac{ e^{ \gb_N \Sigma_{ (|\hat h|-\gep) N^{\xi}  }^{+} -\gb_N  R_N^{2\gep}(0,|\hat h|-\gep) }  \bP_{N}^0 (\cB_N^{+,\gep} ) }{ e^{ \gb_N \Sigma_{ (|\hat{h}|-\gep)N^{\xi}  }^{+} +\gb_N  R_N^{2\gep}(0,|\hat h|-\gep) } \bP_{N}^0  (\cB_N^{+,\gep} )  +  e^{ \gb_N \Sigma_{ (|\hat h|-\gep) N^{\xi}  }^{-} +\gb_N  R_N^{2\gep}(|\hat h|-\gep,0) } \bP_{N}^0  (\cB_N^{-,\gep} ) } \,.
\label{eq:splitprobab2}
\end{multline}
Let us make a few observations. First of all, 
notice also that by symmetry we get that  when $\go\equiv 0$, for any $|\hat h|>0$
\begin{equation}
\label{eq:probabto1/2}
\lim_{N\to\infty}\bP_{N}^{0} \big(\cB_N^{+,\gep}  \big)
 = \lim_{N\to+\infty} \bP_{N}^0(\cB_N^{-,\gep} ) = \frac12 \,.
\end{equation}
Recall that $\lim_{N\to+\infty} N^{-\xi/\ga} \Sigma_{ (|\hat h|-\gep) N^{\xi} }^{+} =X_{|\hat h|-\gep}^{(1)} $  and $\lim_{N\to+\infty} N^{-\xi/\ga} \Sigma_{ (|\hat h|-\gep) N^{\xi} }^{-} = X_{|\hat h|-\gep}^{(2)}$  $\hat \bbP$-a.s.,
and that for fixed $\hat{h}$, $\hat\bbP$-a.s.\ the two processes $X^{(1)}_t$ and $X^{(2)}_t$ are both continuous at $t=|\hat{h}|$, so $\lim_{\gep\downarrow0}X^{(1)}_{|\hat{h}|-\gep}=X^{(1)}_{|\hat{h}|}$ and $\lim_{\gep\downarrow0}X^{(2)}_{|\hat{h}|-\gep}=X^{(2)}_{|\hat{h}|}$.
Note also that, for $\hat\bbP$-almost every realization of $ \go$, for any $\delta$ one can choose $\gep>0$ small enough so that
$N^{-\xi/\ga} R_N^{2\gep}(0,|\hat h|-\gep) \leq \delta$ for all $N$ large enough, 
and similarly for $N^{-\xi/\ga} R_N^{2\gep}(|\hat h|-\gep,0)$.

Let us now consider three cases.

\smallskip
\noindent
(i) If $\gamma < \xi/\ga$.
On the event $X_{|\hat h|}^{(1)} <X_{|\hat h|}^{(2)}$, one can choose $\gep>0$ small enough so that 
\[
\liminf_{N\to+\infty}
N^{-\xi/\ga} \Big( \Sigma_{ (|\hat h|-\gep) N^{\xi}  }^{-} +  R_N^{2\gep} (|\hat h|-\gep,0)   -\big(  \Sigma_{ (|\hat h|-\gep) N^{\xi}  }^{+}  +  R_N^{2\gep} (0, |\hat h|-\gep) \big) \Big) >0 \,.
\]
Since $\gb_N =\hat \gb N^{\frac{\xi}{\ga} -\gamma} N^{-\frac{\xi}{\ga}}$ 
with $N^{\frac{\xi}{\ga}-\gamma} \to +\infty$,
from~\eqref{eq:splitprobab} we deduce that a.e.\ on the event $X_{|\hat h|}^{(1)} <X_{|\hat h|}^{(2)}$, for $\gep>0$ small enough 
\[
\limsup_{N\to+\infty} \bP_{N}^{\go} ( \cB_N^{+,\gep} )
=
\limsup_{N\to+\infty} \frac{Z_{N}^{\go} ( \cB_N^{+,\gep} )}{Z_{N}^{\go} ( \cB_N^{+,\gep} ) +  Z_N^{\go}(\cB_N^{-,\gep} )}  =0\,,
\]
recalling also~\eqref{eq:probabto1/2}.
By an identical reasoning, 
we get that a.e.\ on the event $X_{|\hat h|}^{(1)} >X_{|\hat h|}^{(2)}$, for $\gep>0$ small enough $\limsup_{N\to+\infty} \bP_{N}^{\go} ( \cB_N^{-,\gep} ) =0$. Hence, because the event $X_{|\hat h|}^{(1)} =X_{|\hat h|}^{(2)}$ has probability $0$ and recalling~\eqref{eq:twoprobabto1}, we can conclude that 
\[
\lim_{\gep\downarrow 0} \lim_{N\to+\infty} \bP_{N}^{\go} ( \cB_N^{+,\gep} ) = \ind_{\{ X_{|\hat h|}^{(1)} >X_{|\hat h|}^{(2)}\}}   \qquad \text{$\hat \bbP$-a.s.} \,,
\]
where the limit in $N$ is well-defined provided that $\gep$ is small enough.
A similar statement holds for $\bP_{N}^{\go} ( \cB_N^{-,\gep} )$,
exchanging the role of $X^{(1)}$ and $X^{(2)}$.

\smallskip
\noindent
(ii)  If $\gamma = \xi/\ga$, then similarly as above, from~\eqref{eq:splitprobab}  and \eqref{eq:splitprobab2} we deduce that
for any $\delta>0$,
$\hat \bbP$-a.s. we can choose $\gep>0$ small enough so that
\[
\limsup_{N\to+\infty} \bP_{N}^{\go} ( \cB_N^{+,\gep} )
=
\limsup_{N\to+\infty} \frac{Z_{N}^{\go} ( \cB_N^{+,\gep} )}{Z_{N}^{\go} ( \cB_N^{+,\gep} ) +  Z_N^{\go}(\cB_N^{-,\gep} )}  \leq \frac{e^{\hat \gb X_{|\hat h|-\gep}^{(1)} +\delta}}{ e^{\hat \gb X_{|\hat h|-\gep}^{(1)} -\delta} + e^{\hat \gb X_{|\hat h|-\gep}^{(2)} -\delta}  }\,,
\]
\[
\liminf_{N\to+\infty} \bP_{N}^{\go} ( \cB_N^{+,\gep} )
=
\liminf_{N\to+\infty} \frac{Z_{N}^{\go} ( \cB_N^{+,\gep} )}{Z_{N}^{\go} ( \cB_N^{+,\gep} ) +  Z_N^{\go}(\cB_N^{-,\gep} )}  \geq \frac{e^{\hat \gb X_{|\hat h|-\gep}^{(1)} -\delta}}{ e^{\hat \gb X_{|\hat h|-\gep}^{(1)} +\delta} + e^{\hat \gb X_{|\hat h|-\gep}^{(2)} +\delta}  }\,,
\]
recalling again~\eqref{eq:twoprobabto1} and~\eqref{eq:probabto1/2}.
Taking $\delta$ arbitrarily small, we get that
\[
\lim_{\gep\downarrow0}\limsup_{N\to+\infty} \bP_{N}^{\go} ( \cB_N^{+,\gep} )
=\lim_{\gep\downarrow0}\liminf_{N\to+\infty} \bP_{N}^{\go} ( \cB_N^{+,\gep} )
 = \frac{e^{\hat \gb X_{|\hat h|}^{(1)}}}{ e^{\hat \gb X_{|\hat h|}^{(1)} } + e^{\hat \gb X_{|\hat h|}^{(2)}}  } \qquad \text{$\hat\bbP$-a.s.}
\]
The statement is analogous for $\bP_{N}^{\go} ( \cB_N^{-,\gep} )$,
exchanging the role of $X^{(1)}$ and $X^{(2)}$.

\smallskip
\noindent
(iii) If $\gamma > \xi/\ga$, since $\gb_N =\hat \gb N^{\frac{\xi}{\ga} -\gamma} N^{-\frac{\xi}{\ga}}$ and  $N^{\frac{\xi}{\ga} -\gamma}\to 0$, we get from~\eqref{eq:splitprobab}  and \eqref{eq:splitprobab2} that for any $\gep>0$
\[
\lim_{N\to+\infty} \bP_{N}^{\go} ( \cB_N^{+,\gep} )
=
\lim_{N\to+\infty} \frac{Z_{N}^{\go} ( \cB_N^{+,\gep} )}{Z_{N}^{\go} ( \cB_N^{+,\gep} ) +  Z_N^{\go}(\cB_N^{-,\gep} )}  = \frac12 \qquad\text{$\hat\bbP$-a.s.},
\]
recalling again~\eqref{eq:probabto1/2}.
We also get 
$\lim_{N\to+\infty} \bP_{N}^{\go} ( \cB_N^{-,\gep} ) = \frac12$,
$\hat\bbP$-a.s.

This concludes the proof of~\eqref{eq:ballistic1}.
\qed

\subsection{Boundary region~$\tilde R_4 $---$\tilde R_5$: proof of Theorem~\ref{R4tilde}}
\label{sec:tildeR4R5}

The proof is similar to that for region $\tilde R_4$: one only needs the analogous to Lemma~\ref{tildeR4LM1}.
The rate function $I(u,v)$ for $(N^{-\xi} M_N^{-}, N^{-\xi} M_N^+)$ is replaced by the rate function $\kappa( u\wedge v +u+v)$ for $(N^{-1} M_N^{-},N^{-1} M_N^{+})$, see Lemma~\ref{lem:superdiffusive2}.
We end up with:
\[
\lim_{N\to+\infty} \frac{1}{N} \log Z_{N}^{\go} = \sup_{u,v\in [0,1]} \big\{ |\hat h| (u+v) - \kappa (u\wedge v +u+v) \big\} \quad \text{$\bbP$-a.s.}
\]
Then, using that $\kappa(t) = \frac12 (1+t) \log (1+t) + \frac12 (1-t) \log(1-t)$ if $0\leq t\leq 1$ and $\kappa(t) = +\infty$ if $t>1$,
a straightforward calculation finds that 
the supremum is attained at 
$(u,v) = (0,  \tanh |\hat h|)$ or
$(u,v) = ( \tanh |\hat h| ,0)$
and equals $ \log( \sinh |\hat h|)$.

Then, by following the same ideas as above, 
one can show that the events 
\[
\cB_N^{+,\gep} :=\Big \{  \sup_{t\in [0,1]} \big| N^{-1} S_{\lfloor tN\rfloor} -\tanh(|\hat h|) t \big| \leq \gep \Big\} \,,\quad
\cB_N^{-,\gep} :=\Big \{  \sup_{t\in [0,1]} \big| N^{-1} S_{\lfloor tN\rfloor} + \tanh(|\hat h|) t \big| \leq \gep \Big\} \,,
\]
verify
\[
\lim_{N\to\infty}\bP_N^{\go} (\cB_N^{+,\gep} \cup \cB_N^{-,\gep} ) = 1 \qquad \text{$\bbP$-a.s.}
\]
From this, one can proceed as above (see in particular~\eqref{eq:splitprobab} and~\eqref{eq:splitprobab2})
to get~\eqref{eq:ballistic2}.
Details are left to the reader.

\subsection{Region~$\tilde R_5$: proof of Theorem~\ref{R5tilde}}

First of all, notice that
\begin{equation}
\label{eq:convtildeR5}
Z_N^{\go} \geq Z_N^{\go}(|S_N|=N) \geq e^{-\hat \beta N^{-\gamma}\Sigma_N^*-\hat h N^{1-\zeta} -N \log 2} \quad \text{ and }\quad 
Z_N^{\go} \leq e^{-\hat h N^{1-\zeta} + \hat \beta N^{-\gamma}\Sigma_N^*} \,.
\end{equation}
Hence, the $\bbP$-a.s.\ convergence $\lim_{N\to\infty} N^{\zeta-1} \log Z_N^{\go} = |\hat h|$ is immediate, since $\zeta<0$ and $\frac1\ga - \gamma < 1-\zeta$.

The rest of the proof of Theorem~\ref{R5tilde}
is similar to what is done in Sections~\ref{sec:tildeR4}-\ref{sec:tildeR4R5} above.
In particular, for any $\gep>0$ one has that 
\[
\limsup_{N\to\infty} N^{\zeta-1}\log Z_N^{\go} \big(  |S_N| \leq (1-\epsilon)N \big) \leq -  (1-\tfrac{1}{2}\gep )\hat h 
\qquad \text{$\bbP$-a.s.},
\]
so that $\lim_{N\to\infty}\bP_N^{\go}(  |S_N| \geq (1-\epsilon)N )=1$ $\bbP$-a.s.
Then, by following the same ideas as above, 
one can show that the events 
\[
\cB_N^{+,\gep} :=\Big \{  \sup_{t\in [0,1]} \big| N^{-1} S_{\lfloor tN\rfloor} - t \big| \leq \gep \Big\} \,,\quad
\cB_N^{-,\gep} :=\Big \{  \sup_{t\in [0,1]} \big| N^{-1} S_{\lfloor tN\rfloor} + t \big| \leq \gep \Big\} \,,
\]
verify
\[
\lim_{N\to\infty}\bP_N^{\go} (\cB_N^{+,\gep} \cup \cB_N^{-,\gep} ) = 1 \qquad \text{$\bbP$-a.s.}
\]
From this, one can proceed as above
to get~\eqref{eq:ballistic3}.
Details are left to the reader.

\paragraph*{Improvement in the case $\alpha\in(0,1)$ or $\alpha\in(1,2]$ and $\gamma>\zeta$.}

We now prove~\eqref{Rn=n}.
As mentioned above, we have 
$\lim_{N\to\infty} \bP_N^\omega\left(|S_N|\geq (1-\gep) N\right) = 1$
almost surely.
Now, we can split the event $|S_N| > (1-\gep) N$ according to whether $M_N^+ >\frac12 N$ or 
$M_N^{-} < -\frac12 N$. Hence, we only have to prove that 
$\bP_{N}^{\go}( \frac12 N < M_N^+ \leq  N-1 )/ \bP_N^{\go}( |S_N| =N  )$ a.s.\ goes to $0$, and similarly for $M_N^-$.

To this end, we show the following:
$\bbP$-a.s., for $N$ large enough we have
\begin{equation}
\label{tildeR6eq}
  \frac{Z_N^{\go}( \frac12 N < M_N^+ \leq  N-1 )}{Z_N^{\go}( |S_N| =N  )}  \leq 
  \frac{Z_N^{\go}( \frac12 N < M_N^+ \leq  N-1 )}{Z_N^{\go}( S_N =N  )}
\leq C \exp\Big( \tfrac{1}{8} \hat h N^{-\zeta}\Big) \,.
\end{equation}
This will conclude the proof of~\eqref{Rn=n} since
$\hat h N^{-\zeta} \to -\infty$ (recall $\zeta<0$).

We have $Z_N^{\go}( S_N=N) = 2^{-N} e^{\gb_N \Sigma_N^+ - h_N N} $.
Hence,  using that in the case $M_N^+=N-k$
then $M_N^- \geq - \frac12 k$ so $|\cR_N| \leq N-\frac12 k$,
we get that for $ 1\leq  k <\frac12 N $, after simplifications of the numerator and denominator,
\[
\frac{Z_N^{\go}( M_N^+= N-k )}{Z_N^{\go}( S_N=N)}   
\leq \exp\Big( \gb_N \sum_{i=N-k+1}^N \go_i  + \gb_N \Sigma_{\frac12 k}^* + \tfrac12 h_N k  \Big) 2^N \bP(M_N^+ = N-k )\, .
\]
Denoting $\tilde \Sigma_{k}^* := \Sigma_{\frac12 k}^*+\sup_{1\leq j \leq k} |\sum_{i=N-j+1}^N \go_i |$,
we then get that for $\ell\in \{1, \ldots, \log_2 N-1\}$
\begin{align*}
\frac{Z_N^{\go} \big( M_N^+\in [N-2^{\ell},N-2^{\ell-1}) \big)}{Z_N^{\go}( S_N=N)}   
\leq \exp\Big( \hat \gb N^{-\gamma} \tilde \Sigma_{2^{\ell}}^* + \hat h N^{-\zeta} 2^{\ell-2} \Big)
 2^{N+1} \bP( S_N \geq N - 2^{\ell} ),
\end{align*}
where we also used that $\bP(M_N^+ \geq N-2^{\ell}) = 2 \bP( S_N \geq N- 2^{\ell}) -1$ by the reflection principle. 
Now, since $N-S_N$ has a $\mathrm{Binom}(N,\frac12)$ distribution, we have 
\[
\bP( S_N \geq N- 2^{\ell}) 
= \sum_{i=0}^{2^\ell} 2^{-N} \binom{N}{i}\leq  2^{-N}  2^{\ell} \binom{N}{2^\ell} \,,
\]
for $\ell$ such that $2^\ell \leq \frac12N$.
Note that $2^{\ell} \binom{N}{2^{\ell}} \leq N^{2^{\ell}}$,
which is smaller than $\exp(  2^{\ell-3}  |\hat h| N^{-\zeta} )$ for $N$ large enough (uniformly for the range of $\ell$ considered).
We therefore end up with
\begin{equation*}
\label{tildeR5j}
\frac{Z_N^{\go} \big( M_N^+\in [N-2^{\ell},N-2^{\ell-1}) \big)}{Z_N^{\go}( S_N=N)} 
\leq \exp\Big( C(\hat \gb,\go) N^{-\gamma} (\log_2 N)^{2/\alpha} 2^{\ell/\alpha} +   \hat h N^{-\zeta} 2^{\ell-3}\Big) \, ,
\end{equation*}
where we have also bounded $\tilde \Sigma_{2^\ell}^*$ by a constant $c=c(\go)$ times  $\ell^{2/\alpha} 2^{\ell/\alpha}$,
analogously to Lemma~\ref{lem1}.
Now, uniformly for $\ell \in \{1,\ldots, \log_2N-1\}$,
we have
\[
\frac{N^{-\gamma} (\log_2 N)^{2/\alpha} 2^{\ell/\alpha}}{ N^{-\zeta} 2^{\ell}} 
\leq  (\log_2 N)^{2/\alpha}
\begin{cases}
N^{\zeta-\gamma} & \quad \text{ if } \alpha \in (1,2]\,,\\
N^{\frac{1-\alpha}{\alpha}+\zeta-\gamma} & \quad \text{ if } \alpha \in (0,1) \,.
\end{cases}
\]
This upper bound goes to $0$ as $N\to\infty$
since $\gamma>\zeta$ if $\alpha\in (1,2]$
and $\gamma >\zeta+\frac{1-\alpha}{\alpha}$ if $\alpha\in (0,1)$.
Hence, $\bbP$-a.s., for $N$ large enough we have
\begin{align*}
\frac{Z_N^{\go} \big(\frac12N < M_N^+\leq N-1 \big)}{Z_N^{\go}( S_N=N)} & = \sum_{\ell=1}^{\log_2 N -1} \frac{Z_N^{\go} \big( M_N^+\in [N-2^{\ell},N-2^{\ell-1}) \big)}{Z_N^{\go}( S_N=N)}  \\
& \leq \sum_{\ell=1}^{\log_2 N -1}  \exp\Big(  \hat h N^{-\zeta} 2^{\ell-4}\Big)  \leq C \exp\Big( \frac18 \hat h N^{-\zeta} \Big) \,,
\end{align*}
which gives~\eqref{tildeR6eq}.

For the proof of the last statement (\textit{i.e.}\ the analogous of~\eqref{eq:ballistic3}), notice that
\[
Z_{N}^{\go} (S_N =N) = 2^{-N} e^{\gb_N \Sigma_N^{+} -h_N N } \,,\qquad
Z_{N}^{\go} (S_N =-N) = 2^{-N} e^{\gb_N (\go_0+\Sigma_N^{-})  -h_N N } \,,
\]
so that
\begin{equation}
\frac{Z_{N}^{\go} (S_N =N)}{Z_{N}^{\go} (S_N =N) + Z_{N}^{\go} (S_N =-N)  } = \frac{ e^{\gb_N \Sigma_N^{+} }}{ e^{\gb_N \Sigma_N^{+}} + e^{\gb_N (\go_0+\Sigma_N^{-})} } \,.
\end{equation}
Then, we proceed as in the previous sections to get
\eqref{eq:ballistic3} with $\{S_N=N\}$ in place of~$B_N^{+,\gep}$.
Details are left to the reader.


\appendix
\section{Technical estimates}
\label{sec:app}

\subsection{Estimates on deviation probabilities}
\label{sec:appLD}

We present here some large deviation estimates
for the simple random walk that are needed throughout the paper.
Recall $M_N^- :=  \min_{0\leq n\leq N} S_n $ and 
$M_N^+ :=\max_{0\leq n\leq N} S_n $.

\subsubsection*{Stretching}

Our first lemma
deals with the super-diffusive case:
we estimate the probability that $M_N^+ \geq v N^{\xi}$
and $M_N^- \leq  -u N^{\xi}$ when $\xi\in (\frac12, 1)$, for $u,v\geq 0$.
The one-sided large deviation results are classical,  using e.g.\ explicit calculations for the simple random walk (see~\cite[Ch.~III.7]{Feller1}):  we get that if $\xi\in (\frac12,1)$
\[
\lim_{N\to\infty} - \frac{1}{N^{2\xi-1}} \log \bP\big( M_N^+ \geq v N^{\xi} \big)   = \lim_{N\to\infty} - \frac{1}{N^{2\xi-1}} \log \bP\big( S_N \geq v N^{\xi} \big)  = \frac12 v^2 \,.
\]
The case where both
the minimum and maximum are required to have large deviations is an easy extension of the result:
it follows from the reflection principle that
\[
\bP ( S_N \geq 2a+b ) \vee \bP ( S_N \geq a+2b ) \leq  \bP( M_N^- \leq -a ; M_N^+ \geq b ) \leq \bP ( S_N \geq 2a+b )
  + \bP ( S_N \geq a+2b )\,,
\]
so that $\log \bP( M_N^- \leq -u N^{\xi} ; M_N^+ \geq v N^{\xi} ) \sim \log \bP ( S_N \geq  (u\wedge v +u+v) N^{\xi} )$ as $N\to+\infty$.

\begin{lemma}
\label{lem:superdiffusive}
If $\xi \in(\frac12,1)$ then for any $u,v\geq 0 $ we have that
\begin{equation}
\label{ldp:1}
\lim_{N\to\infty} - \frac{1}{N^{2\xi-1}} \log \bP\Big( M_N^- \leq - u N^{\xi} ; M_N^+ \geq v N^{\xi} \Big) = I(u,v) := \frac{ 1 }{2} ( u\wedge v + u+v)^2\, .
\end{equation}
\end{lemma}

\noindent
As an easy consequence of this lemma, we  get that
for any $\gd>0$, for any $u,v\geq 0 $,
\begin{equation}\label{AE01}
\lim_{N\to\infty} - \frac{1}{N^{2\xi-1}} \log \bP\Big( M_N^- \in \big(-(u+\gd), -u\big] N^{\xi} ; M_N^+ \in \big[ v, v+\gd \big) N^{\xi} \Big) =  I(u,v) .
\end{equation}
Using again the reflection principle, it is also not difficult to show that a local version of~\eqref{AE01}
holds: omitting the integer parts for simplicity, we have
\begin{equation}
\label{localldp}
\limsup_{N\to\infty} - \frac{1}{N^{2\xi-1}} \log 
\inftwo{x\in [u,u+\delta)}{y\in [v,v+\delta)}  \bP\big( M_N^- = - xN^\xi ; M_N^+ = yN^\xi  \big)    = I( u,v )  \,.
\end{equation}

We now state a result that shows that the large deviation
is essentially realized by the event that the random walk moves ballistically to one end (whichever is the closest) and then ballistically to the other one.
For $u,v\geq 0$ with $u\neq v$, 
recall the definition~\eqref{eq:ballisticfunction}
of the function $b_{u,v}$ that goes
with constant speed from $0$ to the closest point between $-u$ and $v$ and then to the other one.
Recall also the notation~\eqref{eq:ball}:
\[
\cB_N^{\gep}(u,v) := \bigg\{ \sup_{t\in [0,1]} \Big| \frac{1}{N^{\xi}} S_{\lfloor tN \rfloor}  -b_{u,v}(t) \Big| \leq \gep \bigg\} \,.
\]
We then have the following result, which is a direct consequence of
Lemma~\ref{lem:superdiffusive}:
let $u,v\geq 0$ with $u\neq v$, then for any $\gep>0$,
\begin{equation}
\label{ldp:ballistic}
\liminf_{N\to\infty} - \frac{1}{N^{2\xi-1}} \log \bP\Big( M_N^- \leq - u N^{\xi} ; M_N^+ \geq v N^{\xi} ; \cB_N^{\gep} (u,v)^c \Big)
 \geq  I(u,v) +c_{\gep}(u,v)\, ,
\end{equation}
for some constant $c_{\gep} (u,v)>0$.

As a consequence,
for $\delta>0$ small enough such that 
$(u-\delta)^+>0$ or $(v-\delta)^+>0$
(where $x^+ := \max\{x,0\}$)
and $[u-\delta,u+\delta] \cap [v-\delta,v+\delta] =\emptyset$,
we also have 
\begin{multline*}
\liminf_{N\to\infty} - \frac{1}{N^{2\xi-1}} \log 
\suptwo{x\in [(u-\delta)^+,u+\delta]}{y\in [(v-\delta)^+,v+\delta]} 
\bP\big( M_N^- = -xN^\xi ; M_N^+ = yN^\xi ; \cB_N^{\gep} (u,v)^c \big)  \\
 \geq I\big( (u-\delta)^+,(v-\delta)^+\big) + c_{\gep} (u-\delta,v-\delta) \,.
\end{multline*}
Together with~\eqref{localldp}, we end up with 
\begin{equation}
\label{ldp:localballistic}
\begin{split}
\liminf_{N\to\infty} - & \frac{1}{N^{2\xi-1}}  \suptwo{x\in [(u-\delta)^+,u+\delta]}{y\in [(v-\delta)^+,v+\delta]}  \log  \frac{\bP\big( M_N^- = -xN^\xi ; M_N^+ = yN^\xi , \cB_N^{\gep} (u,v)^c \big)}{\bP\big( M_N^- = -xN^\xi ; M_N^+ = yN^\xi  \big)}  \\
& \geq  I\big( (u-\delta)^+,(v-\delta)^+\big) + c_{\gep} (u-\delta,v-\delta) - I\big( u+\delta,v+\delta \big)  =: c_{\gep,\gd}(u,v)
\end{split}
\end{equation}
where $c_{\gep,\gd}(u,v)>0$, provided that $\gd$
is small enough (how small depends on $\gep,u,v$).

\smallskip

Let us also state the large deviation result in the case $\xi=1$.
As above, it derives from the fact that $\log \bP( M_N^- \leq -u N ; M_N^+ \geq v N ) \sim \log \bP ( S_N \geq  (u\wedge v +u+v) N)$
as $N\to\infty$.

\begin{lemma}
\label{lem:superdiffusive2}
For any $u,v \geq 0 $, we have that 
\[
\lim_{N\to\infty} - \frac{1}{N} \log \bP\big( M_N^- \leq -u N; M_N^+ \geq v N \big) = \kappa \big( u\wedge v + u+v \big)\, ,
\]
where $\kappa:\bbR_+\to \bbR_+$ is the LDP rate function for the simple random walk, that is $\kappa(t) := \frac12 (1+t) \log (1+t) + \frac12 (1-t) \log(1-t)$ if $0\leq t\leq 1$ and $\kappa(t) = +\infty$ if $t>1$.
\end{lemma}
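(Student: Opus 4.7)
The plan is to establish matching upper and lower bounds on $\bP(M_N^-\leq uN,\, M_N^+\geq vN)$. By the $u\leftrightarrow -v$ symmetry of the simple random walk, I would assume without loss of generality that $|v|\leq |u|$, so that $|u|\wedge|v|+v-u=2v-u=:L$. The heuristic (already recalled in the excerpt) is that the cheapest strategy for the walk is to first reach the closer extreme $vN$, then cross all the way to $uN$, covering total distance $LN$ in $N$ steps, at exponential cost $e^{-N\kappa(L)}$.

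For the upper bound I would use a reflection argument. Introduce the stopping times $\tau_v:=\inf\{n: S_n\geq \lceil vN\rceil\}$ and $\tau_u:=\inf\{n: S_n\leq \lfloor uN\rfloor\}$, and split the event according to whether $\tau_v<\tau_u$ or $\tau_u<\tau_v$. On the first sub-event, define $\tilde S_n:=S_n$ for $n\leq \tau_v$ and $\tilde S_n:=2S_{\tau_v}-S_n$ for $n>\tau_v$; by the strong Markov property and the symmetry of the increments, $\tilde S$ has the same law as $S$, and
\[
\tilde S_{\tau_u}=2S_{\tau_v}-S_{\tau_u}\geq 2vN-uN = LN,
\]
so $\max_{n\leq N}\tilde S_n\geq LN$. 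Hence this contribution is bounded by $\bP(M_N^+\geq LN)\leq 2\bP(S_N\geq LN)\leq 2e^{-N\kappa(L)}$, by the reflection principle and the classical Cram\'er--Chernoff bound for the simple random walk. The symmetric case $\tau_u<\tau_v$ gives, after reflecting at $\tau_u$, $\tilde S_{\tau_v}\leq 2uN-vN$, which produces an upper bound with rate $\kappa(2|u|+|v|)\geq \kappa(L)$; this contribution is therefore subdominant.

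For the lower bound I would exhibit an explicit realizing event. Choose $k_1$ to be the integer nearest $vN/L$ with the correct parity, and restrict to $\{S_{k_1}=\lceil vN\rceil,\, S_N=\lfloor uN\rfloor\}$. By the Markov property and the local large-deviation estimate $\bP(S_k=m)\geq c\, k^{-1/2}e^{-k\kappa(m/k)}$ (a direct consequence of Stirling applied to the binomial coefficient, valid uniformly for $m/k$ in a compact subset of $(-1,1)$), this joint probability factorizes into two pieces whose exponents are $k_1\kappa(vN/k_1)$ and $(N-k_1)\kappa((v-u)N/(N-k_1))$. The choice of $k_1$ makes both effective speeds equal to $L$, so these exponents sum to $N\kappa(L)+o(N)$; joint convexity of the perspective function $(s,x)\mapsto s\kappa(x/s)$ confirms that this partition is in fact optimal. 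This gives $\bP(M_N^-\leq uN, M_N^+\geq vN)\geq c N^{-1}e^{-N\kappa(L)}$, and hence the matching lower bound $-\frac1N\log\bP(\cdots)\leq \kappa(L)+o(1)$.

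The main obstacle is bookkeeping rather than conceptual: ensuring parity compatibility when prescribing both $S_{k_1}$ and $S_N$, and handling the degenerate boundary cases $L\to 0$ or $L\to 1$ (where the local LDP estimate needs slight care; for $L=1$ only the fully monotone path contributes, recovering $\kappa(1)=\log 2$). Both issues can be absorbed into $o(1)$ corrections by shifting $k_1$ by at most $O(1)$ steps and using direct binomial estimates at the boundary.
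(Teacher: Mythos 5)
Your proof is correct, and it is complete where the paper itself offers only a one-line heuristic: the paper explicitly omits the proof of this lemma, merely asserting that the event has the same exponential cost as $\{M_N^+ \geq (|u|\wedge|v|+v-u)N\}$ because the walk must travel to the nearer extreme first and then to the other. Your reflection-at-$\tau_v$ argument for the upper bound and the two-leg local-limit construction for the lower bound are exactly the standard way to make that assertion rigorous, and the parity and boundary issues you flag are indeed the only points requiring care.
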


\noindent
Note that analogues of the ballisticity statements~\eqref{ldp:ballistic} and~\eqref{ldp:localballistic}
hold in the case~$\xi=1$.

\subsubsection*{Folding}
Our second lemma
deals with the sub-diffusive case:
we estimate the probability that $M_N^+ \leq v N^{\xi}$
and $M_N^- \geq -u N^{\xi}$ when $\xi\in (0,\frac12)$,
for $u,v\geq 0$.
The result follows from classical random walk calculations,
leading to explicit expressions of ruin probabilities (see
Eq.~(5.8) in \cite[Ch.~XIV]{Feller1});
one may refer to~\cite[Lem.~2.1]{CP09} and its proof 
for the following statement.

\begin{lemma}
\label{lem:subdiffusive}
If $\xi\in (0,\frac12)$, then for any $u,v\geq 0$ we have that
\begin{equation}
\lim_{N\to\infty} - \frac{1}{N^{1-2\xi}} \log \bP\Big( M_N^- \geq  -u N^{\xi} ; M_N^+ \leq v N^{\xi} \Big) = \frac{\pi^2}{2 (u+v)^2} \, .
\end{equation}
\end{lemma}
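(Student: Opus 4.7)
The plan is to reduce the problem to the spectral analysis of the simple random walk restricted to a bounded interval with absorbing boundaries. Set $a_N := \lceil -u N^{\xi} \rceil$, $b_N := \lfloor v N^{\xi}\rfloor$, so that $L_N := a_N + b_N + 1 \sim (v-u) N^{\xi}$ is the number of integer sites inside the allowed window. The probability to estimate is
\[
\bP\bigl( -a_N \leq S_n \leq b_N,\ \forall\,  0 \leq n \leq N \bigr) = \sum_{k=-a_N}^{b_N} (P_N^N)_{0,k},
\]
where $P_N$ denotes the one-step kernel of the SRW killed upon exiting $\{-a_N,\ldots,b_N\}$. It is classical that $P_N$ is symmetric with eigenvalues $\lambda_j = \cos(\pi j/(L_N+1))$ for $j=1,\ldots,L_N$, and orthonormal eigenvectors $\phi_j(k)\propto \sin(\pi j(k+a_N+1)/(L_N+1))$; expanding $(P_N^N)_{0,k}$ in this basis and summing over $k$ yields
\[
\bP(\cdots) \;=\; \sum_{j=1}^{L_N} c_j\, \lambda_j^N, \qquad c_j = \phi_j(0) \sum_{k=-a_N}^{b_N} \phi_j(k),
\]
so that only odd indices contribute non-trivially.

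First I would establish the upper bound: since $|\lambda_j|\leq 1$ and the $c_j$'s are bounded in terms of $L_N$, one immediately gets $\bP(\cdots) \leq C\, L_N\, \lambda_1^N$. Plugging in $\lambda_1 = \cos(\pi/(L_N+1)) = 1 - \frac{\pi^2}{2(L_N+1)^2} + O(L_N^{-4})$ and using $L_N \sim (v-u)N^{\xi}$ together with $1 - 2\xi > 0$, this gives
\[
-\frac{1}{N^{1-2\xi}} \log \bP(\cdots) \;\geq\; \frac{\pi^2}{2(v-u)^2} - o(1).
\]
For the matching lower bound, I would isolate the leading term $c_1 \lambda_1^N$ and argue that $c_1$ is bounded below by a polynomial in $L_N$; since $0$ lies at macroscopic distance from both endpoints, $\phi_1(0) \to \sin(\pi|u|/(v-u))$, and $\sum_k \phi_1(k)$ is of order $L_N^{1/2}$, so $c_1$ stays bounded away from~$0$ (up to negligible factors). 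Combining,
\[
\bP(\cdots) \;\geq\; c_1 \lambda_1^N - \sum_{j\geq 3}|c_j|\, |\lambda_j|^N,
\]
and the second sum is dominated by $\lambda_1^N$ on the exponential scale since $\lambda_3^N/\lambda_1^N$ decays like $\exp(-4\pi^2 N/(2L_N^2))$, which is subdominant at the same exponential rate but differs by a factor $4$ in the constant --- harmless because only the leading exponent matters once we take $-N^{-(1-2\xi)}\log$.

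The main technical obstacle is the careful handling of the eigenfunction expansion, in particular controlling the coefficients $c_j$ uniformly in $j$ and $N$ so as to rule out any cancellation that would spoil the lower bound. The boundary cases $u=0$ or $v=0$ require minor adjustments, which can be handled either by a parity/reflection argument or by appealing directly to Feller's explicit ruin-probability formulas \cite[Ch.~XIV, Eq.~(5.8)]{Feller1}, as done in~\cite[Lem.~2.1]{CP09}.
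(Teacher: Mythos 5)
Your approach is the standard one, and it is in fact the argument behind the paper's own treatment: the paper does not prove this lemma from scratch but cites Feller's explicit ruin-probability expansion (Ch.~XIV, Eq.~(5.8)) and \cite[Lem.~2.1]{CP09}, both of which rest on exactly the eigenfunction expansion of the killed kernel that you set up. The upper bound and the identification of the rate $\lambda_1^N=\cos(\pi/(L_N+1))^N=\exp(-\pi^2 N^{1-2\xi}/(2(v-u)^2)(1+o(1)))$ are correct.

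There is, however, one concrete flaw in your lower bound. The simple random walk on $\bbZ$ is $2$-periodic, so the spectrum of the killed kernel is symmetric about $0$: $\lambda_{L_N+1-j}=-\lambda_j$, and in particular $|\lambda_{L_N}|^N=\lambda_1^N$ \emph{exactly}. Your claim that $\sum_{j\ge 3}|c_j|\,|\lambda_j|^N$ is exponentially subdominant because ``$\lambda_3^N/\lambda_1^N$ decays'' is therefore false: the top and bottom of the spectrum contribute at the same exponential order, and a priori the term $c_{L_N}\lambda_{L_N}^N=(-1)^N c_{L_N}\lambda_1^N$ could cancel $c_1\lambda_1^N$. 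The cancellation does not in fact occur, but you must say why: since $\phi_{L_N}(k)=(-1)^{k+1}\phi_1(k)$ (up to normalization), the coefficient $c_{L_N}$ involves the alternating sum $\sum_k(-1)^{k+1}\sin(\pi k/(L_N+1))=O(1)$, whereas $\sum_k\sin(\pi k/(L_N+1))\asymp L_N$, so $|c_{L_N}|/|c_1|\to 0$; alternatively, and more simply, bound the confinement probability below by the diagonal return probability $(P_N^{2\lfloor N/2\rfloor+2})_{0,0}=\sum_j\phi_j(0)^2\lambda_j^{2\lfloor N/2\rfloor+2}\ge \phi_1(0)^2\lambda_1^{N+2}$, where every term is nonnegative because the exponent is even; the prefactor $\phi_1(0)^2$ is only polynomially small in $L_N$ and hence negligible on the scale $N^{1-2\xi}$. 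With either repair (and your noted adjustment when $u=0$ or $v=0$, where $\phi_1$ at the starting site is of order $L_N^{-1}$ rather than order one, which is still harmless), the proof is complete.
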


\noindent
As an easy consequence of this lemma, we  get  that
for any $\gd>0$ and any $u,v\geq 0 $,
\begin{equation}\label{AE02}
\lim_{N\to\infty} - \frac{1}{N^{1-2\xi}} \log \bP\Big( M_N^- \in [-u, -u+\delta) N^{\xi} ; M_N^+ \in (v-\gd, v] N^{\xi} \Big) = \frac{\pi^{2} }{2 (u+v)^2} \, .
\end{equation}

\subsection{Proof of Lemma~\ref{lem1}}
\label{app:lemma}

We start with the first part of the statement.
First of all, notice that the bound is trivial if $\ell \T^{-\ga} > 1$: we therefore assume that $\ell \T^{-\ga} \leq 1$.
Using Etemadi's inequality  (see~\cite[Thm.~2.2.5]{Billingsley}) we get that 
\begin{align*}
\bbP \big(\Sigma_\ell^* >\T \big) \le 3 \max\limits_{k\in\{1,\dots, \ell \}}\bbP \big(|\Sigma_k^+| > \tfrac16 \T \big) + 3\max\limits_{k\in\{1,\dots, \ell \}}\bbP \big(|\Sigma_k^-| >\tfrac16 \T\big)\, .
\end{align*}
We only bound  $\bbP \big(|\Sigma_k^+| >\frac16 \T \big)$, since the same bound will hold
for $\bbP \big(|\Sigma_k^-| > \frac16 \T \big)$.
The case $\alpha =2$ is a consequence of Kolmogorov's maximal inequality and the case $\ga \in (0,2)$ ($\ga\neq 1$) follows from the so-called \emph{big-jump} (or one-jump) behavior.
Let us give an easy proof: define $\bar \go_x := \go_x \ind_{\{|\go_x| \leq \T\}}$, so that 
\begin{align*}
\bbP \big(|\Sigma_k^+ |>\tfrac16 \T \big) 
&\leq \bbP \big( \exists  \,  0\leq x \leq k  \, , |\go_x| > \T  \big) 
 + \bbP \Big(  \Big|\sum_{x=0}^{k} \bar \go_x  \Big|>\tfrac16 \T\Big)  \\
& \leq   (k+1) \bbP \big( |\go_0| > \T\big) +  \frac{36}{\T^2}  \Big( (k +1) \bbE\big[ (\bar \go_0)^2 \big] + k(k+1) \bbE[ \bar \go_0]^2 \Big)\,,
\end{align*}
where we used a union bound for the first term and Markov's inequality
(applied to $(\sum_{x=0}^{k} \bar \go_x)^2$) for the second.
Now, the first term is clearly bounded by a constant times~$k\, \T^{-\ga}$ thanks to Assumption~\ref{assump1}.
For the second term, we use again Assumption~\ref{assump1}, 
to get that if $\ga \in (0,1)\cup(1,2)$,
$\bbE[ (\bar \go_0)^2] \leq  c \T^{2-\ga}$
and $\bbE[ \bar \go_0 ] \leq  c \T^{1-\ga}$ (when $\ga \in (1,2)$ we use for this last inequality that $\bbE[\go_0]=0$).
Therefore, we end up with the bound
\[
\bbP \big(|\Sigma_\ell^+ |>\tfrac16 \T \big) 
\leq c  \ell \T^{-\ga} + c \ell^2 \T^{-2\ga}  \leq 2 c  \ell \T^{-\ga} \, ,
\] 
where we have used that $\ell \T^{-\ga} \leq 1$ for the last inequality.

For the second part of the statement, notice that
$\bbP\big(\Sigma_{2^{k}}^*> k^{2/\alpha} 2^{k/\alpha} \big) \leq c k^{-2}$:
hence, by Borel--Cantelli, $\bbP$-a.s.\ there is a constant $C'=C'(\go)$ such that $\Sigma_{2^{k}}^* \leq C' k^{2/\alpha} 2^{k/\alpha}$ for all $k\geq 0$.
Since $\Sigma_\ell^*$ is monotone in $\ell$,
we get that $\bbP$-a.s.\ there is a constant $C=C(\go)$ such that $\Sigma_{\ell}^* \leq C (\log_2 \ell)^{2/\alpha} \ell^{1/\alpha}$
for all $\ell\geq 1$.
\qed

\subsection{Uniqueness of the maximizer: proof of Proposition~\ref{uniquemaximizer}}
\label{sec:unique}

We start with the following lemma, at the core of the proof.

\begin{lemma}\label{lem:unique2}
Let $(X_v^{(1)})_{v\geq0}$ and $(X_u^{(2)})_{u\geq0}$ be two independent $\ga$-stable L\'evy processes and $f: (\bbR_+)^2\to \bbR$ be any function. Denote $\cI_{a,b}^{c,d}:=[c,d]\times[a,b]$. Then for any two disjoint rectangles $\cI_{a,b}^{c,d}$ and $\cI_{a',b'}^{c',d'}$, we have that
\begin{equation*}
\bbP\bigg(\sup\limits_{(u,v)\in\cI_{a,b}^{c,d}}\Big\{X_v^{(1)}+X_u^{(2)}+f(u,v)\Big\}=\sup\limits_{(u',v')\in\cI_{a',b'}^{c',d'}}\Big\{X_{v'}^{(1)}+X_{u'}^{(2)}+f(u',v')\Big\}\bigg)=0 \,.
\end{equation*}
\end{lemma}

\begin{proof}
Let us assume that $a'>b$ (other cases are treated similarly).
Then, the difference of the supremums can be written as
$Z -Z' - (X_{a'}^{(1)} - X_{b}^{(1)})$,
with 
\[
Z= \sup_{(u,v)\in\cI_{a,b}^{c,d}}\big\{X_v^{(1)}-X_{b}^{(1)} +X_u^{(2)}+f(u,v)\big\} \,,\quad 
Z'= \sup_{(u',v')\in\cI_{a',b'}^{c',d'}}\big\{X_{v'}^{(1)}-X_{a'}^{(1)} +X_{u'}^{(2)}+f(u',v')\big\} \,.
\]
Note that $Z,Z'$ are independent of $X_{a'}^{(1)} - X_{b}^{(1)}$: we therefore get that
\[
\bbP\big( X_{a'}^{(1)} - X_{b}^{(1)} = Z'-Z\big) =0 \,,
\]
since $X_{a'}^{(1)} - X_{b}^{(1)}$ has no atom (it is $\alpha$-stable).
\end{proof}


\begin{proof}[Proof of Proposition \ref{uniquemaximizer}]
By Lemma \ref{lem:unique2} and subadditivity, we have
\begin{equation*}
\bbP\bigg(\exists~a,b,c,d,a',b',c',d'\in\bbQ, ~\text{s.t.}~\cI_{a,b}^{c,d}\cap\cI_{a',b'}^{c',d'}=\emptyset, \sup_{(u,v)\in\cI_{a,b}^{c,d}}Y_{u,v}=\sup_{(u',v')\in\cI_{a',b'}^{c',d'}}Y_{u',v'}\bigg)=0.
\end{equation*}
Since $\bbP$-a.s. $\sup_{u,v\geq 0}Y_{u,v}>0$ and $Y_{u,v}\to -\infty$ as $u\to+\infty$ or $v\to+\infty$, then for $\bbP$-a.s. realization $(X_v^{(1)})_{v\geq 0}$, $(X_u^{(2)})_{u\geq 0}$, there exists some rational constant $A=A(\omega)$, such that the supremum is achieved on the rectangle $[0,A]^2$. Then a sequential application of dichotomy yields the uniqueness of the maximizer.

Next, we show that
\begin{equation}\label{nosymmetry}
\bbP\Big(\exists~v\geq0,~\text{such that}~\argmax_{u,v\geq 0}Y_{u,v}=\{(v, v)\}\Big)=0
\end{equation}
Note that the above probability is bounded above by
\begin{equation}
\label{split}
\bbP\Big(\sup_{ u,v\geq 0 }Y_{u,v}=\sup_{ v\geq 0}Y_{v,v}\Big)\leq \bbP\Big(\sup_{0\leq v \leq u }Y_{u,v}=\sup_{v\geq 0}Y_{v,v}\Big)+\bbP\Big(\sup_{ 0\leq u \leq v } Y_{u,v}=\sup_{v\geq 0} Y_{v,v}\Big) \,.
\end{equation}
It suffices to show that both probabilities on the right-hand side of \eqref{split} are $0$.
We only deal with the first  term, the second one is identical.


For any $n,k\geq 0$, let $B_{n,k}=[\frac{k}{2^n},\frac{k+1}{2^n})\times[\frac{k}{2^n},\frac{k+1}{2^n})$. Note that $\bigcup_{k=1}^\infty B_{n,k}$ covers the line $u=v$ and that, as $n\to+\infty$,
 $\bigcup_{k=1}^\infty B_{n,k}\downarrow\{u=v\}$  and $\{v\leq u\}\backslash\bigcup_{k=1}^\infty B_{n,k}\uparrow\{v<u\}$. Hence, by Lemma \ref{lem:unique2} and the monotone convergence theorem, we have that
\begin{equation*}
\bbP\Big(\sup_{0\leq v<u}Y_{u,v}=\sup_{0\leq v}Y_{v,v}\Big)=0.
\end{equation*}
Furthermore, $\bbP$-a.s., for any $v\geq0$, we can take $u_n\downarrow v$, such that $Y_{u_n,v}\to Y_{v,v}$ by the right continuity of L\'evy processes. Hence,
\begin{equation*}
\bbP\Big(\sup_{0\leq v<u}Y_{u,v}=\sup_{0\leq v\leq u}Y_{u,v}\Big)=1 \,.
\end{equation*}
This shows that the upper bound in \eqref{split} is equal to $0$
and concludes the proof of \eqref{nosymmetry}.
\end{proof}

\subsection{Estimates on c\`ad-l\`ag paths at points of continuity}
\label{sec:cadlag}

\begin{lemma}\label{cadlag}
Let $(\alpha_N(t))_{N\geq1}$ be a sequence of c\`ad-l\`ag paths on $[0,\infty)$ that converges to a c\`ad-l\`ag path $\alpha(t)$ for the Skorokhod distance $d_0$ (\textit{cf.} \cite{JS03}). Suppose that $\alpha$ is continuous at $u$. 
Then for any $\epsilon,\delta>0$, there exists  $N_0=N_0(u,\gep,\delta)>0$ such that for all $N\geq N_0$, 
\begin{gather}
\label{continue1}
|\alpha_N(u)-\alpha(u)|<\gep,\\ 
\label{continue2}
\sup\limits_{v\in[ u, u+\delta]}|\alpha_N(v)-\alpha_N(u)|<\gep+\sup\limits_{ v\in[u, u+\delta+\gep]}|\alpha(v)-\alpha(u)|. 
\end{gather}
\end{lemma}

\begin{proof}
We start by proving \eqref{continue1}. Fix $\gep$ and let $\eta=\eta(\gep)>0$ be some number to be chosen below.
Since $\lim_{N\to\infty}d_0(\alpha_N,\alpha)=0$, for the above $\eta>0$, there exists a sequence of non-decreasing bijections $(\lambda_N(t))_{N\geq1}:[0,T]\to [0,T]$ with $T>u$ arbitrary (but fixed) and a large enough integer $N_0$, such that for all $N\geq N_0$,
\begin{equation}\label{d0distance}
\sup\limits_{t\in[0,T]}|\lambda_N(t)-t|<\eta\qquad\text{and}\qquad 
\sup\limits_{t\in[0,T]}|\alpha_N(\lambda_N(t))-\alpha(t)|<\eta.
\end{equation}
We have that
\begin{equation*}
|\alpha_N(u)-\alpha(u)|\leq|\alpha_N(u)-\alpha(\lambda_N(u))|+|\alpha(\lambda_N(u))-\alpha(u)|.
\end{equation*}
By \eqref{d0distance} we have $|\lambda_N(u)-u|<\eta$:  if we had fixed $\eta$ small enough, the second term above is smaller than $\gep/2$ by continuity while the first term is smaller than $\gep/2$ by~\eqref{d0distance}. Therefore \eqref{continue1} is proved.

\smallskip

We now prove \eqref{continue2}. 
Using \eqref{continue1} (with $\gep/3$ instead of $\gep$)
together with the triangular inequality, we get that  
$|\alpha_N(v)-\alpha_N(u)|\leq |\alpha_N(v)-\alpha(u)| +\gep/3$
for $N$ large enough,
so we only need to estimate $|\alpha_N(v)-\alpha(u)|$.
For any fixed $\delta>0$ such that $\delta+u\le T$,
using the sequence $\lambda_N$ defined above, we have that
\begin{equation*}
\begin{split}
\sup\limits_{u\leq v\leq u+\delta} & |\alpha_N(v)-\alpha(u)|=\sup\limits_{\lambda_N^{-1}(u)\leq v'\leq\lambda_N^{-1}(u+\delta)}|\alpha_N(\lambda_N(v'))-\alpha(u)|\\
\leq&\sup\limits_{\lambda_N^{-1}(u)\leq v'\leq\lambda_N^{-1}(u+\delta)}|\alpha_N(\lambda_N(v'))-\alpha(v')|+\sup\limits_{\lambda_N^{-1}(u)\leq v'\leq\lambda_N^{-1}(u+\delta)}|\alpha(v')-\alpha(u)|.
\end{split}
\end{equation*}
The first term above is smaller than $\epsilon/3$ by \eqref{d0distance}. For the second term, always by \eqref{d0distance}, we have that
\begin{equation*}
|\lambda_N^{-1}(u)-u|<\eta,\quad|\lambda_N^{-1}(u+\delta)-u|<\eta+\delta
\end{equation*}
and hence we need to bound
\begin{equation*}
\sup\limits_{u-\eta\leq v\leq u+\eta+\delta}|\alpha(v)-\alpha(u)|\leq\sup\limits_{u-\eta\leq v\leq u}|\alpha(v)-\alpha(u)|+\sup\limits_{u\leq v\leq u+\delta +\eta}|\alpha(v)-\alpha(u)| \,.
\end{equation*}
By continuity, the first term above can be made arbitrarily small by choosing $\eta$ small, so this proves~\eqref{continue2}.
\end{proof}

\section*{Acknowledgments} Q. Berger, N. Torri and R. Wei were supported by a public grant overseen by the French National Research Agency, ANR SWiWS (ANR-17-CE40-0032-02). N. Torri was also supported by the project Labex MME-DII (ANR11-LBX-0023-01). C.-H. Huang was supported by the Ministry of Science and Technology grant MOST 110-2115-M-004-001.
The authors would like to thank referees for their comments and suggestions, which helped improve both the  presentation and the results.

\bibliographystyle{abbrv}
\bibliography{references}

\begin{thebibliography}{10}

\bibitem{Levyproc}
O.~E. Barndorff-Nielsen, T.~Mikosch, and S.~I. Resnick.
\newblock {\em L{\'e}vy processes: theory and applications}.
\newblock Springer Science \& Business Media, 2012.

\bibitem{BD57}
G.~Baxter and M.~D. Donsker.
\newblock On the distribution of the supremum functional for processes with
  stationary independent increments.
\newblock {\em Trans. Amer. Math. Soc.}, 85(1):73--87, 1957.

\bibitem{BW03}
I.~Benjamini and D.~Wilson.
\newblock Excited random walk.
\newblock {\em Electron. Commun. Probab.}, 8:86--92, 2003.

\bibitem{BC18}
N.~Berestycki and R.~Cerf.
\newblock The random walk penalised by its range in dimensions $d\ge3$.
\newblock {\em Ann. Henri Lebesgue}, 4:1--79, 2021.

\bibitem{Ber19}
Q.~Berger.
\newblock Notes on random walks in the {C}auchy domain of attraction.
\newblock {\em Probab. Theory Relat. Fields}, 175(1-2):1--44, 2019.

\bibitem{BT19a}
Q.~Berger and N.~Torri.
\newblock Directed polymers in heavy-tail random environment.
\newblock {\em Ann. Probab.}, 47(6):4024--4076, 2019.

\bibitem{BT19b}
Q.~Berger and N.~Torri.
\newblock Entropy-controlled last-passage percolation.
\newblock {\em Ann. Appl. Probab.}, 29(3):1878--1903, 2019.

\bibitem{BTW21}
Q.~Berger, N.~Torri, and R.~Wei.
\newblock Non-directed polymers in heavy-tail random environment in dimension
  $d\geq2$.
\newblock {\em arXiv:2101.05949}, 2021.

\bibitem{B96}
J.~Bertoin.
\newblock {\em L\'evy Processes}, volume 121 of {\em Cambridge Tracts in
  Mathematics}.
\newblock Cambridge University Press, Cambridge, 1996.

\bibitem{Billingsley}
P.~Billingsley.
\newblock {\em Probability and measure}.
\newblock John Wiley \& Sons, 2008.

\bibitem{Bolt94}
E.~Bolthausen.
\newblock Localization of a two-dimensional random walk with an attractive path
  interaction.
\newblock {\em Ann. Probab.}, pages 875--918, 1994.

\bibitem{Bouchot22}
N.~Bouchot.
\newblock Scaling limits for the random walk penalized by its range in
  dimension one.
\newblock {\em arXiv:2202.11953}, 2022.

\bibitem{CP09}
F.~Caravenna and N.~P{\'e}tr{\'e}lis.
\newblock Depinning of a polymer in a multi-interface medium.
\newblock {\em Electron. J. Probab.}, 14:2038--2067, 2009.

\bibitem{Cha13}
L.~Chaumont.
\newblock On the law of the supremum of {L}{\'e}vy processes.
\newblock {\em Ann. Probab.}, 41(3A):1191--1217, 2013.

\bibitem{C17}
F.~Comets.
\newblock {\em Directed Polymers in Random Environments: \'{E}cole
  d'\'{E}t\'{e} de Probabilit\'{e}s de Saint-Flour XLVI ---2016}, volume 2175.
\newblock Springer, 2017.

\bibitem{CR14}
M.~Cs{\"o}rgo and P.~R{\'e}v{\'e}sz.
\newblock {\em Strong approximations in probability and statistics}.
\newblock Academic press, 2014.

\bibitem{DS85}
H.~Daniels and T.~Skyrme.
\newblock The maximum of a random walk whose mean path has a maximum.
\newblock {\em Advances Appl. Probab.}, 17(1):85--99, 1985.

\bibitem{DFSX18}
J.~Ding, R.~Fukushima, R.~Sun, and C.~Xu.
\newblock Geometry of the random walk range conditioned on survival among
  bernoulli obstacles.
\newblock {\em Probab. Theory Relat. Fields}, 177:91--145, 2020.

\bibitem{DV79}
M.~Donsker and S.~Varadhan.
\newblock On the number of distinct sites visited by a random walk.
\newblock {\em Commun. Pure Appl. Math.}, 32(6):721--747, 1979.

\bibitem{Feller1}
W.~Feller.
\newblock {\em An introduction to probability theory and its applications, Vol.
  I}.
\newblock Wiley \& Sons, 3rd edition.

\bibitem{Feller2}
W.~Feller.
\newblock {\em An introduction to probability theory and its applications, Vol.
  II}.
\newblock Wiley \& Sons, 1971.

\bibitem{Gro89}
P.~Groeneboom.
\newblock Brownian motion with a parabolic drift and {A}iry functions.
\newblock {\em Probab. Theory Relat. Fields}, 81(1):79--109, 1989.

\bibitem{H19}
C.-H. Huang.
\newblock The scaling limits for {W}iener sausages in random environments.
\newblock {\em arXiv:1902.04930}, 2019.

\bibitem{JS03}
J.~Jacod and A.~Shiryaev.
\newblock {\em Limit Theorems for Stochastic Processes. 2nd Edition}.
\newblock Springer Verlag, Berlin, 2003.

\bibitem{MR775792}
K.~Kawazu and H.~Kesten.
\newblock On birth and death processes in symmetric random environment.
\newblock {\em J. Stat. Phys.}, 37(5-6):561--576, 1984.

\bibitem{KP90}
J.~Kim and D.~Pollard.
\newblock Cube root asymptotics.
\newblock {\em Ann. Stat.}, 18(1):191--219, 1990.

\bibitem{Kuz11}
A.~Kuznetsov.
\newblock On extrema of stable processes.
\newblock {\em Ann. Probab.}, 39(3):1027--1060, 2011.

\bibitem{KMR13}
M.~Kwa{\'s}nicki, J.~Ma{\l}ecki, M.~Ryznar, et~al.
\newblock Suprema of {L}{\'e}vy processes.
\newblock {\em Ann. Probab.}, 41(3B):2047--2065, 2013.

\bibitem{Pim14}
L.~P. Pimentel.
\newblock On the location of the maximum of a continuous stochastic process.
\newblock {\em J. Appl. Probab.}, 51(1):152--161, 2014.

\bibitem{PS19}
C.~Profeta and T.~Simon.
\newblock Cram{\'e}r's estimate for stable processes with power drift.
\newblock {\em Electron. J. Probab.}, 24, 2019.

\bibitem{pruitt1981}
W.~E. Pruitt.
\newblock The growth of random walks and {L}\'evy processes.
\newblock {\em Ann. Probab.}, 9(6):948--956, 12 1981.

\bibitem{Sato99}
K.~Sato.
\newblock {\em L{\'e}vy processes and infinitely divisible distributions}.
\newblock Cambridge university press, 1999.

\bibitem{Schmock90}
U.~Schmock.
\newblock Convergence of the normalized one-dimensional {W}iener sausage path
  measures to a mixture of {B}rownian taboo processes.
\newblock {\em Stoch. Stoch. Rep.}, 29(2):171--183, 1990.

\bibitem{Szni91}
A.-S. Sznitman.
\newblock On long excursions of brownian motion among poissonian obstacles.
\newblock {\em Stochastic Analysis}, 1(1):353--375, 1991.

\bibitem{Zer05}
M.~P. Zerner.
\newblock Multi-excited random walks on integers.
\newblock {\em Probab. Theory Relat. Fields}, 133(1):98--122, 2005.

\end{thebibliography}

\end{document}